
\documentclass[10pt]{article}%

\usepackage[show]{ed}

\usepackage{tikz}
\usepackage{pgf}
\usetikzlibrary{arrows}
\usetikzlibrary{calc}
\usetikzlibrary{decorations.pathmorphing}

\sloppy

\usepackage{rotating} 

\usepackage{mathrsfs}
\DeclareMathAlphabet{\mathpzc}{OT1}{pzc}{m}{it}


\usepackage{color}
\usepackage{amsmath}
\usepackage{graphicx}
\usepackage{amsfonts}
\usepackage{amssymb}%
\usepackage{latexsym}
\usepackage{psfrag}
\usepackage{accents}

\setcounter{MaxMatrixCols}{30}

\newtheorem{theorem}{Theorem}[section]
\newtheorem{corollary}[theorem]{Corollary}
\newtheorem{definition}[theorem]{Definition}
\newenvironment{proof}[1][Proof]{\noindent \emph{#1.} }
{\hfill \ \rule{0.5em}{0.5em}}
\newtheorem{lemma}[theorem]{Lemma}
\newtheorem{proposition}[theorem]{Proposition}

\newtheorem{example}[theorem]{Example}
\newtheorem{remark}[theorem]{Remark}

\numberwithin{equation}{section}
\numberwithin{table}{section}
\numberwithin{figure}{section}

\usepackage[a4paper]{geometry}
\geometry{left={3cm}, right={3cm}, top={3cm}, bottom={3cm}}

\newcommand{\cond}{\mathrm{cond}}


\newcommand{\R}{\mathbb{R}}

\newcommand{\cL}{{\cal L}}

\newcommand{\cS}{{\cal S}}

\newcommand{\cD}{{\cal D}}

\newcommand{\cM}{{\cal M}}

\newcommand{\cO}{{\cal O}}
\newcommand{\cZ}{{\cal Z}}
\newcommand{\bx}{x}

\newcommand{\bn}{n}

\newcommand{\bQ}{Q}

\newcommand{\supp}{\mathrm{supp}}

\newcommand{\C}{\mathbb{C}}

\newcommand{\bZ}{Z}

\newcommand{\re}{{\rm e}}
\newcommand{\ri}{{\rm i}}
\newcommand{\rd}{{\rm d}}






\newcommand{\beq}{\begin{equation}}
\newcommand{\eeq}{\end{equation}}
\newcommand{\beqs}{\begin{equation*}}
\newcommand{\eeqs}{\end{equation*}}
\newcommand{\bit}{\begin{itemize}}
\newcommand{\eit}{\end{itemize}}
\newcommand{\ben}{\begin{enumerate}}
\newcommand{\een}{\end{enumerate}}
\newcommand{\bal}{\begin{align}}
\newcommand{\eal}{\end{align}}
\newcommand{\bals}{\begin{align*}}
\newcommand{\eals}{\end{align*}}
\newcommand{\bse}{\begin{subequations}}
\newcommand{\ese}{\end{subequations}}
\newcommand{\bpr}{\begin{proposition}}
\newcommand{\epr}{\end{proposition}}
\newcommand{\bre}{\begin{remark}}
\newcommand{\ere}{\end{remark}}
\newcommand{\bpf}{\begin{proof}}
\newcommand{\epf}{\end{proof}}
\newcommand{\ble}{\begin{lemma}}
\newcommand{\ele}{\end{lemma}}
\newcommand{\bco}{\begin{corollary}}
\newcommand{\eco}{\end{corollary}}
\newcommand{\bex}{\begin{example}}
\newcommand{\eex}{\end{example}}
\newcommand{\bth}{\begin{theorem}}
\newcommand{\enth}{\end{theorem}}

\newcommand{\Rea}{\mathbb{R}}
\newcommand{\Com}{\mathbb{C}}

\newcommand{\Oi}{{\Omega_-}}

\newcommand{\Oe}{{\Omega_+}}

\newcommand{\GR}{{\Gamma_R}}
\newcommand{\OR}{{\Omega_R}}

\newcommand{\eps}{\varepsilon}

\newcommand{\pdiff}[2]{\frac{\partial #1}{\partial #2}}

\newcommand{\dnpu}{\partial_n^+ u}

\newcommand{\nus}{|u|^2}
\newcommand{\ngus}{|\nabla u|^2}

\newcommand{\gu}{\nabla u}

\newcommand{\nvs}{|v|^2}
\newcommand{\ngvs}{|\nabla v|^2}

\newcommand{\gv}{\nabla v}

\newcommand{\nT}{\nabla_{S}}


\newcommand{\half}{\frac{1}{2}}

\newcommand{\LtG}{{L^2(\Gamma)}}

\newcommand{\LtGt}{{\LtG\rightarrow \LtG}}

\newcommand{\HhG}{{H^{1/2}(\Gamma)}}
\newcommand{\HmhG}{{H^{-1/2}(\Gamma)}}

\newcommand{\HoG}{H^1(\Gamma)}

\newcommand{\tendi}{\rightarrow \infty}
\newcommand{\tendo}{\rightarrow 0}


\newcommand{\opA}{A'_{k,\eta}}
\newcommand{\opABW}{A_{k,\eta}}
\newcommand{\opAinv}{(A'_{k,\eta})^{-1}}
\newcommand{\opABWinv}{A^{-1}_{k,\eta}}

\newcommand{\normAinv}{\|\opAinv\|}




\def\XXint#1#2#3{{\setbox0=\hbox{$#1{#2#3}{\int}$}
     \vcenter{\hbox{$#2#3$}}\kern-.5\wd0}}

\usepackage{hyperref}
\definecolor{myblue}{rgb}{0,0,0.6}
\hypersetup{colorlinks=true,
linkcolor=myblue,citecolor=myblue,filecolor=myblue,urlcolor=myblue}
\newcommand*{\N}[1]{\left\|#1\right\|}

\allowdisplaybreaks[4]

\newcommand{\tfa}{\text{ for all }}

\newcommand{\tas}{\text{ as }}
\newcommand{\tand}{\text{ and }}



\newcommand{\vertiii}[1]{{\left\vert\kern-0.25ex\left\vert\kern-0.25ex\left\vert #1
    \right\vert\kern-0.25ex\right\vert\kern-0.25ex\right\vert}}


\newcommand{\GammaR}{\GR}

\newcommand{\DtN}{P^+_{DtN}}

\newcommand{\ItD}{P^{-,\eta}_{ItD}}

\newcommand{\opBM}{B_{k,\eta}}

\newcommand{\bound}{\Gamma}

\newcommand{\cutoff}{\chi_1 R(k)\chi_2}
\newcommand{\LtLt}{L^2(\Oe)\rightarrow L^2(\Oe)}


\usepackage{soul}

\definecolor{amcol}{rgb}{0.8,0,0}

\definecolor{escol}{rgb}{0,0,0.8}
\definecolor{estcol}{rgb}{0,0.5,0}
\definecolor{esnewcol}{rgb}{0,0.5,0}


\begin{document}

\title{High-frequency bounds for the Helmholtz equation under parabolic trapping and applications in numerical analysis
}

\author{S. N.~Chandler-Wilde\footnotemark[1]\,\,, E. A. Spence\footnotemark[2]\,,\, A.~Gibbs\footnotemark[3]$^{,\,}$\footnotemark[4]
\,, and V. P. Smyshlyaev\footnotemark[4]}
\date{}

\date{\today}

\renewcommand{\thefootnote}{\fnsymbol{footnote}}

\footnotetext[1]{Department of Mathematics and Statistics, University of Reading,
Whiteknights, PO Box 220, Reading, RG6 6AX, UK, \tt S.N.Chandler-Wilde@reading.ac.uk}
\footnotetext[2]{Department of Mathematical Sciences, University of Bath, Bath, BA2 7AY, UK, \tt E.A.Spence@bath.ac.uk }
\footnotetext[3]{Department of Computer Science, Celestijnenlaan 200 A box 2402, 3001 Leuven, Belgium}
\footnotetext[4]{Department of Mathematics, University College London, Gower Street, London, WC1E 6BT, UK, \tt Andrew.Gibbs@ucl.ac.uk, V.Smyshlyaev@ucl.ac.uk}

\maketitle

\begin{abstract}
This paper is concerned with resolvent estimates on the real axis for the Helmholtz equation posed in the exterior of a bounded obstacle with Dirichlet boundary conditions when the obstacle is \emph{trapping}.
There are two resolvent estimates for this situation currently in the literature: (i) in the case of {\em elliptic trapping} the general ``worst case'' bound of exponential growth
applies, and examples show that this growth can be realised through some sequence of wavenumbers;
(ii) in the prototypical case of {\em hyperbolic trapping} where the Helmholtz equation is posed in the exterior of two strictly convex obstacles (or several obstacles with additional constraints) the nontrapping resolvent estimate holds with a logarithmic loss.

This paper proves the first resolvent estimate for {\em parabolic trapping} by obstacles, studying a class of obstacles the prototypical example of which is the exterior of two squares (in 2-d), or two cubes (in 3-d), whose sides are parallel. We show, via developments of the vector-field/multiplier argument of Morawetz and the first application of this methodology to trapping configurations, that a resolvent estimate holds with a polynomial loss over the nontrapping estimate. We use this bound, along with the other trapping resolvent estimates, to prove results about integral-equation formulations of the boundary value problem in the case of trapping.
Feeding these bounds into existing frameworks for analysing finite and boundary element methods, we obtain
the first wavenumber-explicit proofs of convergence for numerical methods for solving the Helmholtz equation in the exterior of a trapping obstacle.

\vspace{-1.5ex}

\paragraph{Keywords:} Helmholtz equation, high frequency, trapping, resolvent, scattering theory, semiclassical analysis, boundary integral equation.

\vspace{-1.5ex}

\paragraph{AMS subject classifications:} 35J05, 35J25, 35P25, 65N30, 65N38, 78A45

\end{abstract}

\renewcommand{\thefootnote}{\arabic{footnote}}

\section{Introduction}\label{sec:1}

\subsection{Context, and informal discussion of the main results}\label{sec:4star}
Trapping and nontrapping are central concepts in scattering theory. In the case of the Helmholtz equation, $\Delta u + k^2 u=-f$, posed in the exterior of a bounded, Dirichlet obstacle $\Oi$ in 2- or 3-dimensions, $\Oi$ is \emph{nontrapping}
if all billiard trajectories starting in an exterior neighbourhood of $\Oi$ escape from that neighbourhood after some uniform time, and $\Oi$ is \emph{trapping} otherwise (see Definitions \ref{def:nt1} and \ref{def:nt2} below for more precise statements, taking into account subtleties about diffraction from corners).

This paper is concerned with resolvent estimates (i.e.~a priori bounds on the solution $u$ in terms of the data $f$) for the exterior Dirichlet problem when $k$ is real. We can write these in terms of the outgoing cut-off resolvent $\chi_1 R(k)\chi_2 : L^2(\Oe) \rightarrow L^2(\Oe)$ for $k\in \Rea\setminus\{0\}$,
where $\Oe:= \Rea^d \setminus \overline{\Oi}$, $\chi_1, \chi_2 \in C^\infty_{\rm comp}(\overline{\Oe})$
and $R(k):= (\Delta +k^2)^{-1}$, with Dirichlet boundary conditions, is such that $R(k) : L^2(\Oe)\rightarrow L^2(\Oe)$ for $\Im k>0$.
When $\Oi$ is nontrapping, given $k_0>0$,
\beq\label{eq:nt_estimate}
\N{\cutoff}_{\LtLt}\lesssim \frac{1}{k} \quad\tfa k\geq k_0;
\eeq
this classic result was first obtained by the combination of the results on propagation of singularities for the wave equation on manifolds with boundary by
Melrose and Sj\"ostrand \cite{MeSj:78, MeSj:82} with either the parametrix method of Vainberg \cite{Va:75} (see \cite{Ra:79}) or the methods of Lax and Phillips \cite{LaPh:89} (see \cite{Me:79}), following the proof
by Morawetz, Ralston, and Strauss  \cite{Mo:75,MoRaSt:77} of the bound under a slightly-stronger condition than nontrapping.

In this situation of scattering by a (Dirichlet) obstacle, there are two resolvent estimates in the literature when $\Oi$ is trapping. The first is
the general result of Burq \cite[Theorem 2]{Bu:98} that, given any smooth $\Oi$ and $k_0>0$, there exists $\alpha>0$ such that
\beq\label{eq:Burq}
\N{\cutoff}_{\LtLt}\lesssim \re^{\alpha k} \quad\tfa k\geq k_0.
\eeq
If $\Oi$ has an ellipse-shaped cavity (see Figure \ref{fig:examples}(a)) then there exists a sequence of wavenumbers $0<k_1<k_2<\ldots$, with $k_j\tendi$, and $\alpha>0$ such that
\beq\label{eq:ellipse}
\N{\chi_1 R(k_j)\chi_2}_{\LtLt}\gtrsim \re^{\alpha k_j} \quad j=1,2,\ldots,
\eeq
see, e.g., \cite[\S2.5]{BeChGrLaLi:11}, and thus the bound \eqref{eq:Burq} is sharp.  More generally, if  there exists an elliptic trapped ray (i.e.~an elliptic closed broken geodesic),
and $\partial \Oi$ is analytic in neighbourhoods of the vertices of the broken geodesic, then the resolvent can grow at least as fast as $\exp{(\alpha k_j^q)}$, through a sequence $k_j$ as above and for some range of $q\in(0,1)$, by the quasimode construction of Cardoso and Popov \cite{CaPo:02} (note that Popov proved \emph{superalgebraic} growth for certain elliptic trapped rays when $\partial \Oi$ is smooth in \cite{Po:91}).

\begin{figure}[h]
\centering
\begin{tikzpicture}[line cap=round,line join=round,>=triangle 45,x=1.0cm,y=1.0cm, scale=1.65]
\colorlet{lightgray}{black!15}

\fill[color=lightgray](3.35,0.8) -- (4.25,0.8) -- (4.25,1.8) -- (3.35,1.8)--cycle;
\draw (3.35,0.8) -- (4.25,0.8) -- (4.25,1.8) -- (3.35,1.8)--cycle;
\fill[color=lightgray](4.75,0.8) -- (5.65,0.8) -- (5.65,1.8) -- (4.75,1.8)--cycle;
\draw (4.75,0.8) -- (5.65,0.8) -- (5.65,1.8) -- (4.75,1.8)--cycle;
\draw (3.02,1.8) node {(c)};
\draw[thick,dashed] (4.25,1.3) -- (4.75,1.3);

\fill[color=lightgray] (0.6,1.3) circle (0.5);
\draw (0.6,1.3) circle (0.5);
\fill[color=lightgray] (2,1.3) circle (0.5);
\draw (2,1.3) circle (0.5);
\draw (-0.2,1.8) node {(b)};
\draw[thick,dashed] (1.1,1.3) -- (1.5,1.3);

\coordinate (P) at ($(-1.75,1.3)+(130:0.25 and 0.5)$);
\coordinate (Q) at ($(-2.65,1.3)+(130:0.25 and 0.5)$);
\coordinate (R) at ($(-2.65,1.3)+(230:0.25 and 0.5)$);
\fill[color=lightgray] (P) arc (130:230:0.25 and 0.5) -- (R) -- (Q) -- (P);
\draw (P) arc (130:230:0.25 and 0.5) -- (R) -- (Q) -- (P);

\coordinate (A) at ($(-1.75,1.3)+(310:0.25 and 0.5)$);
\coordinate (B) at ($(-0.85,1.3)+(50:0.25 and 0.5)$);
\coordinate (C) at ($(-0.85,1.3)+(310:0.25 and 0.5)$);
\fill[color=lightgray] (A) arc (-50:50:0.25 and 0.5) -- (B) -- (C) -- (A);
\draw (A) arc (-50:50:0.25 and 0.5) -- (B) -- (C) -- (A);

\draw[red,dashed] (-1.75,1.3) ellipse (0.25 and 0.5);

\draw (-3.05,1.8) node {(a)};
\draw[thick,dashed] (-2,1.3) -- (-1.5,1.3);

\end{tikzpicture}
\caption{Examples of: (a) elliptic trapping; (b) hyperbolic trapping; (c) parabolic trapping.}
\label{fig:examples}
\end{figure}
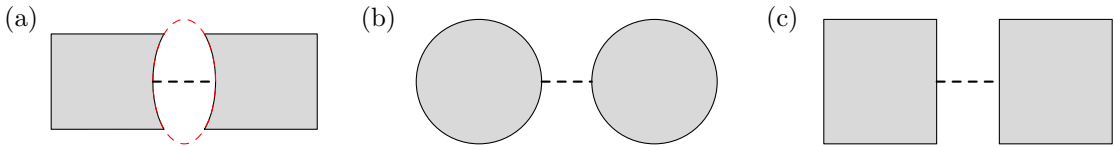

The second trapping resolvent estimate in the literature concerns hyperbolic trapping, the standard example of which is when $\Oi$ equals two disjoint convex obstacles with strictly positive curvature; see Figure \ref{fig:examples}(b).
The work of Ikawa on this problem (and its generalisation to a finite number of such obstacles satisfying additional conditions -- see Definition \ref{def:IB} below) implies that there exists $N>0$ such that
\beq\label{eq:Ikawa}
\N{\cutoff}_{\LtLt}\lesssim k^N \quad\tfa k\geq k_0
\eeq
\cite[Theorem 2.1]{Ik:88}, \cite[Theorem 4.5]{Bu:04}, and this bound was later improved by Burq \cite[Proposition 4.4]{Bu:04} to
\beq\label{eq:Ikawa2}
\N{\cutoff}_{\LtLt}\lesssim \frac{\log (2+k)}{k} \quad\tfa k\geq k_0,
\eeq
i.e.~the trapping is so weak there is only a logarithmic loss over the nontrapping estimate \eqref{eq:nt_estimate}.

\paragraph{Summary of the main results and their novelty.}
This paper considers the exterior Dirichlet problem for a certain class of parabolic-trapping obstacles,
and the heart of this paper and its main result is the following theorem, which is subsumed into the more-general Theorem \ref{thm:resol} below.

\begin{theorem}\label{thm:newintro}
For the class of obstacles in Definition \ref{def:trapb} below,
 the simplest example of which is two squares (in 2-d) or two cubes (in 3-d) with their sides parallel (see Figure \ref{fig:examples}(c)), given $k_0>0$,
\beq\label{eq:us}
\N{\cutoff}_{\LtLt}\lesssim k \quad\tfa k\geq k_0.
\eeq
\end{theorem}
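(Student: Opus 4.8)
The plan is to prove the resolvent bound \eqref{eq:us} by the Morawetz vector-field/multiplier method, adapted to the parabolic trapping geometry. Recall that for a solution $u$ of $\Delta u + k^2 u = -f$ in $\Oe$, outgoing, with $u=0$ on $\Gamma = \partial\Oi$, the Morawetz identity is obtained by multiplying the equation by $\overline{\mathcal{M}u}$ with $\mathcal{M}u := \bx\cdot\nabla u + \beta u - \ri k (\bx\cdot\hax) u$ (the Morawetz--Ludwig multiplier, suitably centred and with a well-chosen constant $\beta$) and integrating over $\Oe \cap B_R$ for large $R$. Letting $R\to\infty$ and using the outgoing (Sommerfeld/Rellich) radiation condition to control the far-field terms, one obtains an identity of the schematic form $\int_{\Oe}\big(|\nabla u|^2 + k^2|u|^2\big) \lesssim \big|\int_\Gamma (\bx\cdot\bn)\,|\partial_n u|^2\big| + \text{(data terms in } f)$, where the boundary term comes from the Dirichlet condition and the sign of $\bx\cdot\bn$ on $\Gamma$ is the crucial issue. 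For a \emph{star-shaped} obstacle (with respect to the origin) one has $\bx\cdot\bn \le 0$ on $\Gamma$, the boundary term has a favourable sign, and one recovers \eqref{eq:nt_estimate}; the whole difficulty of the trapping case is that no single centre makes $\bx\cdot\bn$ have a good sign everywhere on $\Gamma$.

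The key new idea — and, I expect, the main obstacle — is to handle the trapping by a \emph{layered/partitioned multiplier}: rather than a single vector field $\bx$, one uses a piecewise-defined vector field $\bv$ (or a family of Morawetz multipliers localised by a partition of unity in space) adapted to the two-component geometry of Figure~\ref{fig:examples}(c). Because the two obstacles are squares/cubes with parallel faces, the two faces bounding the trapping channel are flat and parallel, so on each of those faces one can choose the vector field to point \emph{into} the respective obstacle (giving $\bv\cdot\bn\le 0$ there), while on the remaining, ``outward-facing'' parts of each square one recentres so that again $\bv\cdot\bn\le 0$. The price is that $\bv$ is only piecewise smooth: across the interfaces between the regions of the partition there are jump terms in the Morawetz identity, and in the trapping channel itself the vector field must interpolate, producing a volume term that is \emph{not} sign-definite. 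Controlling this indefinite channel term is where the polynomial loss enters: one bounds it using a Poincar\'e-type / trace inequality in the thin channel together with the flatness of the channel walls (which kills the term that would, for a curved cavity, concentrate and force exponential growth), at the cost of a power of $k$. Carrying this out rigorously — choosing the partition, the interpolation profile of $\bv$, and tracking every interface and data term with explicit $k$-dependence — is the technical heart of the argument.

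Concretely, the steps I would carry out are: (1) reduce \eqref{eq:us} to an a priori bound $\N{\nabla u}_{L^2(\Oe\cap B_R)} + k\N{u}_{L^2(\Oe\cap B_R)} \lesssim k\,\N{f}_{L^2(\Oe)}$ for the truncated problem, using standard arguments (cut-offs $\chi_1,\chi_2$, interior elliptic regularity, and the nontrapping behaviour away from the obstacle) to pass from the cut-off resolvent to such an interior bound; (2) set up the Morawetz/Rellich identity with the Morawetz--Ludwig multiplier $\mathcal{M}u$ and the piecewise vector field $\bv$, integrating over $\Oe\cap B_R$ and sending $R\to\infty$, recording all boundary, interface, and volume terms; (3) show the genuinely boundary terms on $\Gamma$ have the right sign by the choice of $\bv$ on each face, and show the far-field terms vanish by the radiation condition; (4) estimate the indefinite channel volume term and the interface jump terms, using the flatness/parallelism of the channel walls and a one-dimensional Poincar\'e inequality transverse to the channel, absorbing the result into the left-hand side at the expense of one power of $k$; (5) bound the $f$-dependent terms by $\N{f}_{L^2}$ (another power of $k$ appears from the $\ri k(\bx\cdot\hax)u$ part of the multiplier paired against $f$, consistent with the claimed bound); and (6) combine to obtain the stated estimate, then translate back to the cut-off resolvent norm. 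I expect step (4) — making the indefinite channel term lose only a fixed polynomial power of $k$, rather than the exponential growth one would get for a curved (elliptic) cavity — to be the decisive and hardest step, and the place where the specific parabolic geometry of Definition~\ref{def:trapb} is essential.
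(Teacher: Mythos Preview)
Your overall framework is right: Morawetz multiplier method, a vector field adapted to the trapping geometry, a transition region producing indefinite terms, a Poincar\'e-type inequality to recover control in the trapping region at the cost of a power of $k$, and a further power of $k$ from the $\ri k$ part of the multiplier paired against $f$. The paper follows exactly this outline.

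However, your specific vector field and localisation strategy differ from the paper's, and your version has a gap. You propose a \emph{piecewise} vector field (or partition of unity) that points ``into the respective obstacle'' on each face, with interface jump terms to be controlled. The paper instead uses a single \emph{smooth} vector field
\[
Z(x) = e_d x_d\,(1-\chi(r)) + x\,\chi(r),
\]
where $\chi$ is a radial cutoff with $\chi=0$ for $r\le R_0$ and $\chi=1$ for $r\ge R_1$. In the trapping region this is $Z=e_d x_d$, whose derivative matrix contributes only $|\partial_d u|^2$ (positive \emph{semi}definite) to the Morawetz identity; outside, $Z=x$ is the standard Morawetz field. The boundary condition $Z\cdot n\ge 0$ on $\Gamma$ is imposed as a condition on the \emph{obstacle class} (this is precisely what Definition~\ref{def:trapb} encodes), not achieved by recentring near each face. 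Because $Z$ is smooth there are no interface jump terms at all; the only indefinite term is a volume cross-term $x_d\,\partial_d\bar u\,\partial_r u\,\chi'(r)$ supported in the annulus $R_0<r<R_1$, controlled by Cauchy--Schwarz using the constraint $r\chi'(r)<4$ built into the definition.

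The gap in your proposal is step~(4). A genuinely piecewise vector field would produce interface terms involving traces of $\nabla u$ on artificial internal surfaces cutting through $\Omega_+$, and there is no evident way to bound these without already having the resolvent estimate. The paper avoids this entirely: the smooth $Z$ gives no interfaces; $Z=e_d x_d$ in the trapping region yields control of $\|\partial_d u\|_{L^2}$ there; then a Poincar\'e--Friedrichs inequality in the $x_d$-direction (Lemma~\ref{lem:Fried}/Corollary~\ref{cor:Fried}) converts this into control of $\|u\|_{L^2}$ in the trapping region, borrowing $k^2\int|u|^2\chi$ from the nontrapping region --- this is exactly where the polynomial loss enters. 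Your ``one-dimensional Poincar\'e transverse to the channel'' is the right instinct, but without the specific choice $e_d x_d$ producing $\partial_d u$ control, and without the smooth radial transition to $x$, the mechanism that makes it work is missing.
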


We believe that \eqref{eq:us} is the first resolvent estimate proved for parabolic trapping by obstacles.
A simple construction involving the eigenfunctions of the Dirichlet Laplacian on an interval gives an example of a compactly-supported $f$ such that $\N{\chi_1 R(k) f}_{L^2(\Oe)}\gtrsim \N{f}_{L^2(\Oe)}$ (see \cite[End of \S3]{ChMo:08}),
so that \eqref{eq:us} is at most one power of $k$ away from being sharp.
Furthermore, we prove that if \emph{either} $\supp \,\chi_1$ \emph{or} $\supp \,\chi_2$ is sufficiently far away from the ``trapping region" (this is defined more precisely below, but in the example of two squares/cubes one can think of it as the region between the two obstacles), then $\N{\cutoff}_{L^2\rightarrow L^2}\lesssim 1$, and if \emph{both} $\supp\, \chi_1$ and $\supp \,\chi_2$ are sufficiently far away from the trapping region, then the nontrapping estimate $\N{\cutoff}_{L^2\rightarrow L^2}\lesssim 1/k$ holds.

We prove these resolvent estimates by adapting and developing the vector-field/multiplier argument of Morawetz; this argument famously proves the estimate \eqref{eq:nt_estimate} for the Dirichlet resolvent for star-shaped domains \cite{Mo:75, MoLu:68} (see also \cite{ChMo:08}) using the vector field $\bx$, and (in $d=2$) for a class of domains slightly more restrictive than nontrapping \cite{Mo:75}, \cite[\S4]{MoRaSt:77}.
The present paper represents the first application of this methodology to trapping by a bounded obstacle.
Our argument is based on using the
vector field $e_dx_d$ (with $e_d$ the unit vector in the $x_d$ direction) in the trapping region and the vector field $x$ in the far-field;
see Figure \ref{fig:cavity_vector_field} below for an example obstacle along with the corresponding vector field.
The main technical challenge is achieving a transition between these vector fields (and other coefficients in the multiplier) in a controllable way, and a main source of difficulty in accomplishing this is that the derivative matrix of $e_dx_d$ is only semidefinite, in contrast to related transitioning arguments applied in nontrapping configurations where the derivative matrices of the vector fields are positive definite (e.g. \cite[Lemma 2, Proof of Lemma 5]{Mo:75}); a more-detailed outline of the ideas behind the proof is given in \S\ref{sec:idea}.
We note that the vector field $e_dx_d$ (on its own) has been used by the first author and Monk \cite{ChMo:05} to prove a priori bounds on solutions of scattering by unbounded rough surfaces, and also by Burq, Hassell, and Wunsch \cite{BuHaWu:07} to study spreading of quasimodes in the Bunimovich stadium.

An advantage of these vector-field arguments in this obstacle setting is that they avoid the substantial technicalities involved with propagation of singularities on manifolds with boundary. Indeed, the only other results in the literature that deal with parabolic and/or degenerate hyperbolic configurations, these proved with propagation-of-singularities methods, are the results of Christianson and Wunsch \cite{ChWu:13}, \cite{Ch:13} in the setting of scattering by metrics (where there is no boundary); see the discussion in \S\ref{sec:discuss}. Moreover, using these propagation-of-singularities techniques to prove resolvent estimates for scattering by non-smooth obstacles is highly nontrivial; the only result for non-smooth obstacles obtained with these methods is that of Baskin and Wunsch \cite{BaWu:13},
that the nontrapping resolvent estimate \eqref{eq:nt_estimate} holds in 2-d for \emph{nontrapping polygons} (in the sense of Definition \ref{def:nt2} below).

Additionally, our vector-field arguments lead naturally to the improvements described above in the $k$-dependence of \eqref{eq:us} if either $\supp \chi_1$ or $\supp \chi_2$ is sufficiently far away from the trapping region. In the case of scattering by smooth obstacles such improvements have been
established by propagation-of-singularities arguments, but only when $\supp \chi_1\equiv \supp\chi_2$ and both are sufficiently far away from the obstacle; see Burq \cite[Theorem 4]{Bu:02a} and Cardoso and Vodev \cite[Theorem 1.1]{CaVo:02}. Related results where the cut-off functions are replaced by semiclassical pseudodifferential operators restricting attention to areas of phase space isolated from the trapped set
have been proved in the setting of scattering by a potential and/or by a metric (but not an obstacle) by Datchev and Vasy \cite[Theorems 1.1, 1.2]{DaVa:13}.

One further advantage of these vector-field arguments is that, for $k\geq k_0$ for some explicitly given $k_0$, they enable us to obtain an expression for the omitted constant in \eqref{eq:us} that is explicit in all parameters (in particular,  parameters describing the geometries of the domain, and the choice of the cut-off functions; see Lemma \ref{lem:E5new} below); thus our resolvent estimates are ``quantitative'' in the sense of, e.g., Rodnianski and Tao \cite{RoTa:14}.

The resolvent estimate \eqref{eq:us} has immediate implications for boundary-integral-equation formulations of the scattering problem, for the numerical analysis of these integral-equation formulations, and also for the numerical analysis of the finite element method (based on the standard domain-based variational formulation of the scattering problem); these implications are outlined in \S\ref{sec:FEM} and \S\ref{sec:BEM} below. In this sense, this paper follows the theme of
\cite{ChMo:08}, \cite{Sp:14}, and \cite{BaSpWu:16} of proving high-frequency estimates for the Helmholtz equation and then exploring their implications for integral equations and numerical analysis. Novelties of the present paper with respect to \cite{ChMo:08}, \cite{Sp:14}, and \cite{BaSpWu:16} include that:
\ben
\item We show how to write down the passage from resolvent estimate, to bound on the Dirichlet-to-Neumann map, to bounds on integral operators, explicitly as a general black-box ``recipe", and use this recipe -- applied implicitly to $C^\infty$ \emph{nontrapping} scenarios in \cite{BaSpWu:16} -- to deduce the first bounds for \emph{trapping} scenarios.
As a consequence, this paper  includes the first wavenumber-explicit proofs of convergence for a numerical method for solving the Helmholtz equation in a trapping domain (see \S\ref{sec:FEM} and \S\ref{sec:BEM}).
\item Whereas \cite{ChMo:08}, \cite{Sp:14}, and \cite{BaSpWu:16}  proved bounds on integral operators posed only on the space $\LtG$, where $\Gamma:=\partial \Omega_-$, we prove wavenumber-explicit bounds for the Sobolev spaces $H^s(\Gamma)$ and $H^s_k(\Gamma)$ for $-1\leq s\leq 1$ (defined in \S\ref{sec:interp}). One motivation for this is that, just as there is large interest in the $\LtG$-theory of these integral operators, there is also a large interest in the theory in the ``energy spaces" $H^{\pm 1/2}(\Gamma)$ and $H^{\pm 1/2}_k(\Gamma)$ (see, e.g., \cite[Chapter 7]{Mc:00}, \cite[Chapter 3]{SaSc:11}, \cite[Chapter 6]{St:08}).
\item To complement the \emph{upper bound} on the integral operator under parabolic trapping proved in Corollary \ref{cor:CFIE}, we prove a new \emph{lower bound} in this scenario in Lemma \ref{lem:smysh}. The arguments used in the proof of the lower bound additionally lead to
a counterexample to a conjecture on $k$-uniform coercivity of integral operators made in \cite[Conjecture 6.1]{BeSp:11}; see  \S\ref{rem:coercivity} below.
\een

\subsection{Statement of the main resolvent-estimate and DtN-map results}

\subsubsection{Geometric definitions} \label{sec:geometric}

Let $\Omega_-\subset \R^d$, $d=2,3$, be a bounded Lipschitz open set
such that the open complement
$\Omega_+:= \Rea^d \setminus \overline{\Omega_-}$ is connected, and let $\Gamma:= \partial \Omega_+ = \partial \Omega_-$ and $R_\Gamma:= \max_{x\in \Gamma}|x|$. Let $\gamma_\pm$ denote the trace operators from $\Omega_\pm$ to $\Gamma$, let $\partial_n^\pm$ denote the normal derivative trace operators (the normal pointing out of $\Omega_-$ and into $\Omega_+$), and let $\nabla_S$ denote the surface gradient operator on $\Gamma$.
Let $H^1_{\mathrm{loc}}(\Omega_+)$ denote the set of functions, $v$, such that $v$ is locally integrable on $\Omega_+$
and $\chi v \in H^1(\Omega_+)$ for every $\chi \in C_{\mathrm{comp}}^\infty(\overline{\Omega_+}):=\{ \chi|_{\Omega_+}  : \,\chi \in C^\infty(\Rea^d) \mbox{ is compactly supported}\}$.  We abbreviate $r:= |x|$, and $x_j$ and $n_j(x)$ denote the $j$th components of $x$ and $n(x)$, respectively, so that $n_j(x) = e_j\cdot n(x)$, where $e_j$ is the unit vector in the $x_j$ direction. Let $B_R(x):= \{y\in \R^d:|x-y|<R\}$ and $B_R:= B_R(0)$. Finally, let $\Omega_R:= \Oe\cap B_R$.

In discussing resolvent estimates, the following geometric definitions play a central role.

\begin{definition}[Star-shaped, and star-shaped with respect to a ball]
We say that a bounded open set $\Omega$ is:
\

(i) \emph{star-shaped with respect to the point $\bx_0\in \Omega$}  if, whenever $\bx \in \Omega$, the segment $[\bx_0,\bx]\subset \Omega$;

(ii) \emph{star-shaped} if there exists an $\bx_0\in \Omega$ such that $\Omega$ is star-shaped with respect to $\bx_0$;

(iii) \emph{star-shaped with respect to the ball $B_{a}(\bx_0)$} if it is star-shaped with respect to every point in $B_{a}(\bx_0)$;

(iv) \emph{star-shaped with respect to a ball} if there exists $a>0$ and $\bx_0\in\Omega$ such that $\Omega$ is star-shaped with respect to the ball $B_{a}(\bx_0)$.
\end{definition}

Recall that if $\Oi$ is Lipschitz, then it is star-shaped with respect to $\bx_0$ if and only if $(\bx-\bx_0)\cdot n(\bx)\geq 0$ for all $\bx \in\Gamma$ for which $n(\bx)$ is defined,
and $\Oi$ is star-shaped with respect to $B_{a}(\bx_0)$ if and only if
$(\bx-\bx_0) \cdot n(\bx) \geq {a}$ for all  $\bx \in \Gamma$ for which $n(\bx)$ is defined; see, e.g., \cite[Lemma 5.4.1]{Mo:11}.

\begin{definition}[Nontrapping]\label{def:nt1}
We say that
 $\Oi\subset \Rea^d, \,d=2, 3$, is
\emph{nontrapping} if $\bound$ is smooth ($C^\infty$) and,
given $R$ such that $\overline{\Oi}\subset B_R$, there exists a $T(R)<\infty$ such that
all the billiard trajectories (in the sense of Melrose--Sj{\"o}strand~\cite[Definition 7.20]{MeSj:82})
that start in $\Omega_R$
 at time zero leave $\Omega_R$
 by time $T(R)$.
\end{definition}

We now introduce the classes of Lipschitz obstacles  to which  our new resolvent estimates apply (Definitions \ref{def:trapb} and \ref{def:trapc}).
The most general class is the class of {\em $(R_0,R_1)$ obstacles} (Definition \ref{def:trapb}). The definition of this class is somewhat implicit, in terms of existence of an appropriate vector field $Z$ that we use when proving the resolvent estimate \eqref{eq:us}. But it  follows from the definition and Remark \ref{rem:chi} that, expressed in terms of the geometry of the obstacle, membership of this class is nothing more than a requirement that, for some concentric circles centred on the origin of radii $R_0$ and $R_1$ with $R_1/R_0>\re^{1/4}$, it holds that $x_dn_d(x)\geq 0$ for almost all $x\in \Gamma$ inside the smaller circle, that $x\cdot n(x) \geq 0$ for almost all $x\in \Gamma$ outside the larger circle, and that some particular convex combination of $x_dn_d(x)$ and $x\cdot n(x)$ is non-negative for almost all $x\in \Gamma$ in the transition zone between the two circles.

\begin{definition}[$(R_0,R_1)$ obstacle] \label{def:trapb}  For $0<R_0<R_1$ we say that $\Omega_-$ is an {\em $(R_0,R_1)$ obstacle} if there exists $\chi\in C^3[0,\infty)$ with
\begin{enumerate}
\item[(i)] $\chi(r)=0$ for $0\leq r\leq R_0$, $\chi(r) = 1$, for $r\geq R_1$, $0<\chi(r)<1$, for $R_0<r<R_1$; and
\item[(ii)] $0\leq r\chi^\prime(r) < 4$, for $r>0$;
\end{enumerate}
such that  $Z(x)\cdot n(x) \geq 0$ for all $x\in \Gamma$ for which the normal $n(x)$ is defined, where
\begin{equation} \label{eq:Zdeff}
Z(x) := e_dx_d\big(1-\chi(r)\big) + x \chi(r), \quad x\in \R^d.
\end{equation}
\end{definition}

\begin{remark}[Constraint on $R_1/R_0$] \label{rem:chi}
If $\Oi$ is an $(R_0,R_1)$ obstacle then $R_1/R_0>\re^{1/4}\approx 1.284$. For, if $\chi\in C^3[0,\infty)$ satisfies (i) and (ii), then
$$
1 = \int_{R_0}^{R_1} \chi^\prime(r) \, \rd r < \int_{R_0}^{R_1} \frac{4}{r}\, \rd r = 4 \log(R_1/R_0).
$$
Conversely (see the proof of Lemma \ref{lem:strongly} below), if $R_1>\re^{1/4}R_0$, then $\chi\in C^3[0,\infty)$ can be constructed satisfying the constraints (i) and (ii) of the above definition.
\end{remark}

 An important sub-class of $(R_0,R_1)$ obstacles is the class of {\em strongly $(R_0,R_1)$ obstacles} (Definition \ref{def:trapc} and see Figures \ref{fig:examples}(c), \ref{fig:twosquares} and  \ref{fig:cavity}). The difference between these definitions is precisely that we require that {\em both} $x_dn_d(x)$ and $x\cdot n(x)$ be non-negative for almost all $x\in \Gamma$ in the transition zone between the two circles for an obstacle to be {\em strongly} $(R_0,R_1)$.

\begin{definition}[Strongly $(R_0,R_1)$ obstacle] \label{def:trapc}  For $R_1>\re^{1/4}R_0>0$ we say that $\Omega_-$ is a {\em strongly $(R_0,R_1)$ obstacle} if, for all $x\in \Gamma$ for which $n(x)$ is defined, $x_dn_d(x)\geq 0$ if $|x|\leq R_1$, while $x\cdot n(x) \geq 0$ if $|x|\geq R_0$.
\end{definition}

\begin{lemma}[A strongly $(R_0,R_1)$ obstacle is an $(R_0,R_1)$ obstacle] \label{lem:strongly} If $\Oi$ is a strongly $(R_0,R_1)$ obstacle, then $\Oi$ is an $(R_0,R_1)$ obstacle.
\end{lemma}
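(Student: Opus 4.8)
The plan is to construct an explicit cutoff function $\chi\in C^3[0,\infty)$ satisfying conditions (i) and (ii) of Definition \ref{def:trapb}, and then verify that the associated vector field $Z$ from \eqref{eq:Zdeff} satisfies $Z(x)\cdot n(x)\geq 0$ for a.e.\ $x\in\Gamma$, using the two sign hypotheses $x_dn_d(x)\geq 0$ for $|x|\leq R_1$ and $x\cdot n(x)\geq 0$ for $|x|\geq R_0$ that define a strongly $(R_0,R_1)$ obstacle. The key observation for the second part is that $Z(x) = e_dx_d(1-\chi(r)) + x\chi(r)$ is, pointwise, a nonnegative combination (since $0\leq\chi(r)\leq 1$) of the vector fields $e_dx_d$ and $x$. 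So I would split $\Gamma$ into three regions by the value of $r=|x|$: where $r\leq R_0$ we have $\chi(r)=0$, so $Z(x)\cdot n(x) = x_dn_d(x)\geq 0$; where $r\geq R_1$ we have $\chi(r)=1$, so $Z(x)\cdot n(x) = x\cdot n(x)\geq 0$; and in the transition zone $R_0<r<R_1$ both $x_dn_d(x)\geq 0$ (since $r\leq R_1$) and $x\cdot n(x)\geq 0$ (since $r\geq R_0$) hold, so the convex combination $Z(x)\cdot n(x) = (1-\chi(r))\,x_dn_d(x) + \chi(r)\,x\cdot n(x)$ is nonnegative. This gives $Z\cdot n\geq 0$ on all of $\Gamma$ once a valid $\chi$ exists.

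The remaining — and genuinely substantive — step is the construction of $\chi$, which is exactly the ``conversely'' claim flagged in Remark \ref{rem:chi}. Given $R_1>\re^{1/4}R_0$, I need $\chi\in C^3[0,\infty)$ with $\chi\equiv 0$ on $[0,R_0]$, $\chi\equiv 1$ on $[R_1,\infty)$, $0<\chi<1$ on $(R_0,R_1)$, and the pointwise bound $0\leq r\chi'(r)<4$ everywhere. The natural ansatz is to prescribe $\chi'$ directly: take $\chi'(r) = c\,\psi(r)/r$ on $(R_0,R_1)$ for a suitable bump-like nonnegative $\psi$ supported in $(R_0,R_1)$ and a constant $c$ chosen so that $\int_{R_0}^{R_1}\chi'(r)\,\rd r = 1$, i.e.\ $c\int_{R_0}^{R_1}\psi(r)/r\,\rd r = 1$. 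The constraint $r\chi'(r) = c\,\psi(r) < 4$ then reads $c\,\|\psi\|_\infty < 4$. Since $\int_{R_0}^{R_1}\psi(r)/r\,\rd r \leq \|\psi\|_\infty\log(R_1/R_0)$, we get $c \geq 1/(\|\psi\|_\infty\log(R_1/R_0))$, but we also want $c$ not too large; the point is that with $\psi$ chosen close to the constant $\|\psi\|_\infty$ on most of $(R_0,R_1)$ (e.g.\ a plateau-shaped bump), $\int_{R_0}^{R_1}\psi(r)/r\,\rd r$ can be made as close as we like to $\|\psi\|_\infty\log(R_1/R_0)$, forcing $c\,\|\psi\|_\infty$ as close as we like to $1/\log(R_1/R_0) < 4$ precisely because $R_1/R_0>\re^{1/4}$ gives $\log(R_1/R_0)>1/4$. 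One must also ensure $C^3$ regularity of $\chi$ at $R_0$ and $R_1$, which means $\psi$ should vanish to sufficiently high order (at least order $2$, so that $\chi'$ is $C^2$) at the endpoints $R_0$ and $R_1$; a standard smooth plateau bump achieves this.

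The main obstacle is therefore the quantitative balancing in the construction of $\chi$: one must simultaneously meet the normalization $\int\chi' = 1$, the strict pointwise bound $r\chi'<4$, and the endpoint regularity, and the margin is controlled entirely by the hypothesis $R_1/R_0>\re^{1/4}$. I expect the cleanest write-up to fix once and for all a smooth ``plateau'' profile $\phi_\delta$ on $[0,1]$ — equal to $1$ on $[\delta,1-\delta]$, equal to $0$ near $0$ and $1$, with values in $[0,1]$ — rescale it to $(R_0,R_1)$, set $\chi'(r) = c_\delta\,\phi_\delta((r-R_0)/(R_1-R_0))/r$, compute $c_\delta = \big(\int_{R_0}^{R_1}\phi_\delta((r-R_0)/(R_1-R_0))/r\,\rd r\big)^{-1}$, and observe that $c_\delta \to 1/\log(R_1/R_0)$ as $\delta\to 0$, so $r\chi'(r) \leq c_\delta < 4$ for $\delta$ small enough; then $\chi(r):=\int_{R_0}^r\chi'(s)\,\rd s$ on $[R_0,R_1]$, extended by $0$ and $1$, is the desired function. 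With $\chi$ in hand, the three-region sign check above completes the proof of Lemma \ref{lem:strongly}.
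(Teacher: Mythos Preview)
Your proposal is correct and follows essentially the same approach as the paper's proof: the paper also first observes that $Z(x)\cdot n(x) = (1-\chi(r))\,x_dn_d(x) + \chi(r)\,x\cdot n(x) \geq 0$ is a convex combination of the two sign-controlled quantities, and then constructs $\chi$ via $\chi(r) = \int_{R_0}^r (p(s)/s)\,\rd s \big/ \int_{R_0}^{R_1}(p(s)/s)\,\rd s$ for a $C^2$ plateau function $p$ supported in $[R_0,R_1]$ and equal to $1$ on $[R_0+\epsilon,R_1-\epsilon]$, which is exactly your $c_\delta\,\phi_\delta/r$ ansatz. One small point: to meet condition~(i) of Definition~\ref{def:trapb}, namely $0<\chi(r)<1$ for $R_0<r<R_1$, your bump $\phi_\delta$ must be strictly positive on the open interval $(0,1)$ (not merely ``equal to $0$ near $0$ and $1$''), as the paper requires of its $p$; otherwise $\chi$ would vanish on a small interval to the right of $R_0$.
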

\begin{proof}
 To show that a strongly $(R_0,R_1)$ obstacle $\Oi$ is an $(R_0,R_1)$ obstacle we just need to construct a $\chi\in C^3[0,\infty)$ satisfying the constraints (i) and (ii) of Definition \ref{def:trapb}. For if we do that and define $Z$ by \eqref{eq:Zdeff}, then $Z(x)\cdot n(x) = x_dn_d(x) (1-\chi(r))+x\cdot n(x) \chi(r) \geq 0$. But,
given any $0<\epsilon<(R_1-R_0)/2$, we can construct a  $p \in C^2(\R)$ such that $p(r)=0$, for $r\leq R_0$ and $r\geq R_1$, $0<p(r)\leq 1$, for $R_0<r<R_1$, and $p(r)=1$ for $R_0+\epsilon \leq r\leq R_1-\epsilon$. Then, if $R_1/R_0> \re^{1/4}$, the function
$$
\chi(r) := \int_{R_0}^r\frac{p(s)}{s}\, \rd s\bigg/\int_{R_0}^{R_1}\frac{p(s)}{s}\, \rd s, \quad r\geq 0,
$$
is in $C^3[0,\infty)$ and satisfies the constraints (i) and (ii) provided
\begin{equation} \label{eq:epsconstraint}
\epsilon \leq \epsilon_0 := \frac{R_1-R_0\re^{1/4}}{\re^{1/4}+1}\, .
\end{equation}
In particular, for $r>0$,
$$
0\leq r\chi^\prime(r) = \frac{p(r)}{\int_{R_0}^{R_1}(p(s)/s)\, \rd s} < \frac{1}{\int_{R_0+\epsilon}^{R_1-\epsilon}s^{-1}\, \rd s} = \frac{1}{\log((R_1-\epsilon)/(R_0+\epsilon)},
$$
and this last expression is $\leq 4$ if and only if \eqref{eq:epsconstraint} holds.
\end{proof}

\begin{figure}[h]
\centering
\begin{tikzpicture}[line cap=round,line join=round,>=triangle 45,x=1.0cm,y=1.0cm, scale=1.85]
\colorlet{lightgray}{black!15}
\fill[color=lightgray](0,0) -- (1,0) -- (1,1) -- (0,1)--(0,0);
\draw (0,0) -- (1,0) -- (1,1) -- (0,1)--(0,0);
\fill[color=lightgray](1.5,-0.5) -- (2.5,-0.5) -- (2.5,0.5) -- (1.5,0.5)--(1.5,-0.5);
\draw (1.5,-0.5) -- (2.5,-0.5) -- (2.5,0.5) -- (1.5,0.5)--(1.5,-0.5);
\draw [blue,thick,<->] (1,0.3) -- (1.5,0.3);
\draw (1.25,0.3) node[anchor=south] {$a$};
\draw (1.8,0.6) node {$\Gamma$};
\draw [->] (0.6,1) -- (0.6,1.5);
\draw (0.6,1.35) node[anchor = west] {$n$};
\draw [->] (0,0) -- (3,0);
\draw [->] (0,0) -- (0,1.7);
\draw (2.8,-0.05) node[anchor=north west] {$x_1$};
\draw (0,1.4) node[anchor=south west] {$x_2$};

\end{tikzpicture}
\caption{The obstacle $\Omega_-$ is the union of two parallel squares. For $R_1>\re^{1/4}R_0\geq R_\Gamma:= \max_{x\in \Gamma}|x|$, it is a 2-d example both of a strongly $(R_0,R_1)$ obstacle and of an $(R_0,R_1,a)$ parallel trapping obstacle,  since $x_2n_2(x)\geq 0$ for all $x\in \Gamma$ for which $n(x)$ is defined.}
\label{fig:twosquares}
\end{figure}
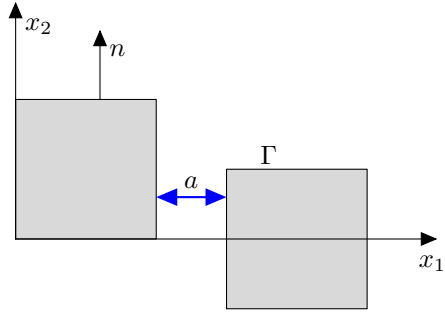

\begin{figure}[h]
\centering
\begin{tikzpicture}[line cap=round,line join=round,>=triangle 45,x=1.0cm,y=1.0cm, scale=1.2]
\colorlet{lightgray}{black!15}
\fill[color=lightgray](0,0) -- (0,1) -- (-5,1) -- (-6,-1.6)--(-4,-2.5) -- (2.4,-1.7) -- (2.4,1) -- (1,1) -- (1,0) -- (0,0);
\draw (0,0) -- (0,1) -- (-5,1) -- (-6,-1.6)--(-4,-2.5) -- (2.4,-1.7) -- (2.4,1) -- (1,1) -- (1,0) -- (0,0);
\draw[red,thick,dashed] (0,0) circle (1.6);
\draw [red,->] (0,0) -- (160:1.6);
\draw (160:1) node[anchor=south] {$R_0$};
\draw[red,thick,dashed] (0,0) circle (3);
\draw [blue,thick,<->] (0,0.5) -- (1,0.5);
\draw (0.5,0.5) node[anchor=north] {$a$};
\draw [red,->] (0,0) -- (-45:3);
\draw (-45:2.1) node[anchor=south west] {$R_1$};
\draw (1.8,1.2) node {$\Gamma$};
\draw [->] (0,0) -- (2.8,0);
\draw [->] (0,0) -- (0,2);
\draw [->] (-4,1) -- (-4,1.5);
\draw (-4,1.35) node[anchor = west] {$n$};
\draw (2.55,-0.05) node[anchor=north west] {$x_1$};
\draw (0,1.7) node[anchor=south west] {$x_2$};

\end{tikzpicture}
\caption{The grey-shaded obstacle $\Omega_-$ is a 2-d example both of a strongly $(R_0,R_1)$ obstacle and of an $(R_0,R_1,a)$ parallel trapping obstacle, with the values of $R_0$, $R_1$, and $a$ indicated, since $R_1> \re^{1/4}R_0$ and, for $x\in \Gamma$, $x_2n_2(x)\geq 0$ for $|x|\leq R_1$ and $x\cdot n(x)\geq 0$ for $|x|\geq R_0$.}
 \label{fig:cavity}
\end{figure}
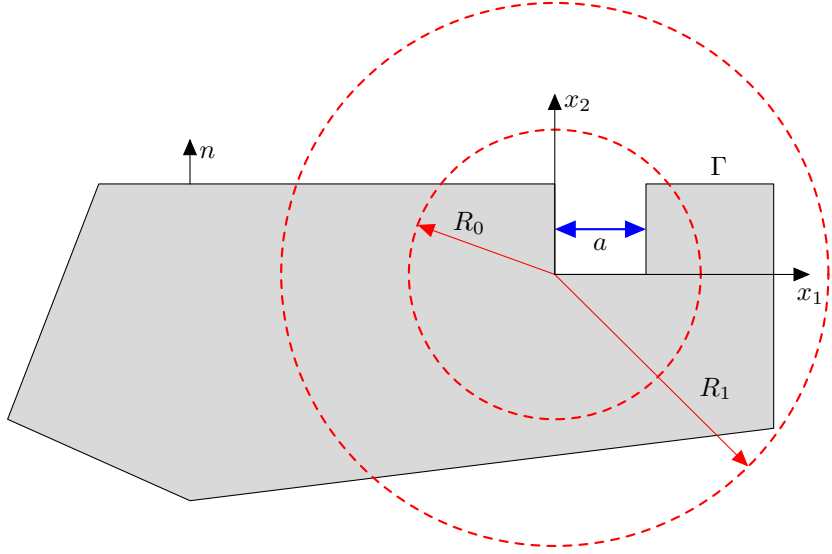

\begin{figure}[h]
\begin{center}
\includegraphics[width=12cm]{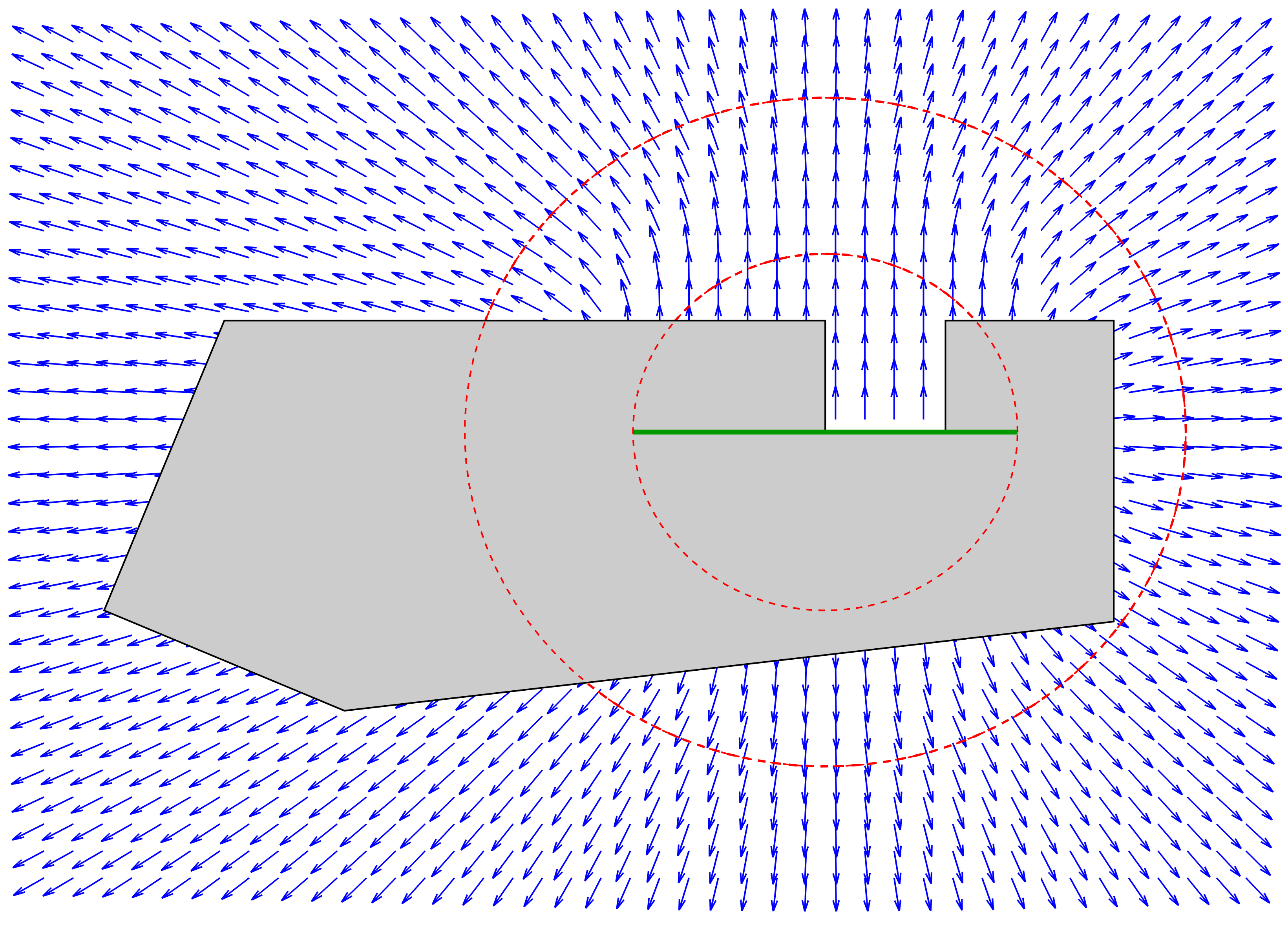}
 \end{center}
\caption{The obstacle $\Omega_-$ of Figure \ref{fig:cavity} and the same concentric circles centred on the origin of radii $R_1>R_0$, but with arrows showing the direction of the vector field $Z$, where $Z$ is defined by \eqref{eq:Zdeff} with $\chi$ constructed as in the proof of Lemma \ref{lem:strongly} (with $\epsilon:= \epsilon_0$, where $\epsilon_0$ is given by \eqref{eq:epsconstraint}). Note that $Z(x)=x$ outside the larger circle, that $Z(x)=x_2e_2$ inside the smaller circle (so that $Z$ vanishes and its direction is undefined on the thick green line), and that $Z(x)\cdot n(x)\geq 0$ for all $x\in \Gamma$ for which $n(x)$ is defined.}
\label{fig:cavity_vector_field}
\end{figure}

\begin{remark}[Examples of strongly $(R_0,R_1)$ obstacles] \label{ex:trap} It is clear that $\Omega_-$ is a strongly $(R_0,R_1)$ obstacle (and so an $(R_0,R_1)$ obstacle) if $R_1> \re^{1/4}R_0>0$ and either of the following conditions holds.

 (i) $R_0 \geq R_\Gamma :=\max_{x\in \Gamma}|x|$, and $x_dn_d(x)\geq 0$  for all $x\in \Gamma$ for which $n(x)$ is defined (e.g.\  $\Omega_-$ is the union of two or more balls with centres in the plane $x_d=0$, or the union of two or more parallel squares, see Figure \ref{fig:twosquares}).

(ii) $\min_{x\in \Gamma}|x|\geq R_1$ and $\Oi$ is star-shaped with respect to the origin.
\end{remark}

The second example shows that an $(R_0,R_1)$ obstacle need not be trapping, and so it is convenient to define a class of $(R_0,R_1)$ obstacles that \emph{are} trapping.

\begin{definition}[$(R_0,R_1,a)$ parallel trapping obstacle] \label{def:ptd} For $0<R_0<R_1$ and $a>0$ we say that $\Omega_-$ is an {\em $(R_0,R_1,a)$ parallel trapping obstacle} if it is an $(R_0,R_1)$ obstacle and there exist $y,z\in \Gamma$ with $n_d(y)=0$ and $n(z)=-n(y)$ such that $z=y+an(y)$ and, for some $\epsilon>0$, $n(x)=n(y)$ for $x\in \Gamma\cap B_\epsilon(y)$, $n(x)=n(z)$ for $x\in \Gamma\cap B_\epsilon(z)$, and
$$
\Omega_C := \big\{x+tn(x):0<t<a \mbox{ and } x\in \Gamma\cap B_\epsilon(y)\big\}\subset \Omega_+.
$$
\end{definition}

The point of this definition is that $\Gamma\cap B_\epsilon(y)$ and $\Gamma\cap B_\epsilon(z)$ are parallel parts of $\Gamma$ and that $\{x+tn(x):0< t <a\}$ is a (trapped) billiard trajectory in $\Omega_+$ for $x\in \Gamma\cap B_\epsilon(y)$.

Figures \ref{fig:twosquares} and \ref{fig:cavity} are examples both of Definition \ref{def:ptd} and Definition \ref{def:trapc}. By Lemma \ref{lem:strongly} they are also examples of Definition \ref{def:trapb}, satisfying $Z(x)\cdot n(x)\geq 0$ for all $x\in \Gamma$ for which $n(x)$ is defined, where $Z$ is given by \eqref{eq:Zdeff}, with $R_1> \re^{1/4} R_0$ as indicated in the figures and $\chi\in C^3[0,\infty)$ satisfying the conditions of Definition \ref{def:trapb}. Figure \ref{fig:cavity_vector_field} illustrates the direction of the vector field $Z$ for the obstacle and choice of $R_0$ and $R_1$ in Figure \ref{fig:cavity},  with $\chi$ constructed as in the proof of Lemma \ref{lem:strongly}.

One example of a strongly $(R_0, R_1)$ obstacle supporting parabolic trapping that is \emph{not} an $(R_0,R_1,a)$ parallel trapping obstacle is a 3-d cube with a circular cylinder (of diameter $a$) taken out of one side; to be specific let us take the obstacle $\Omega_-:=\{x:|x_j|<a \mbox{ for } j=1,2,3\}\setminus \{x:x_1^2+x_2^2\leq a^2/4 \mbox{ and } x_3\geq 0\}$. This is strongly $(R_0,R_1)$ by Remark \ref{ex:trap}(i), but is not an $(R_0, R_1,a)$ parallel trapping obstacle because, although there exist $y,z \in \Gamma$ with $n_d(y)=0$ and $n(z)=-n(y)$ such that $z= y + a n(y)$ (on the inside of the cylinder), the normal vector is not constant in a neighbourhood of $y,z$, and so there does not exist an $\epsilon>0$ such that $n(x)=n(y)$ for $x \in \Gamma \cap B_\epsilon(y)$ and $n(x)= n(z)$ for $x\in\Gamma \cap B_\epsilon(z)$.

There also exist obstacles supporting parabolic trapping that are not $(R_0, R_1)$ obstacles, in which case they are also not strongly $(R_0,R_1)$ obstacles nor $(R_0,R_1,a)$ parallel trapping obstacles. For example, let
$$
S_1 := \{x:|x_1|\leq 1/2, x_2\geq 0\} \quad \mbox{and} \quad S_2 := \{x:x_1\geq 1, -3/2\leq x_2 \leq -1/2\}.
$$
Then the obstacle
$
\Omega_- := \{x:|x_1|< 2, -3<x_2<1\}\setminus S_1,
$
a square with a smaller, unit square removed, is a strongly $(R_0,R_1)$ obstacle and an $(R_0,R_1,a)$ parallel trapping obstacle, if $R_1>\re^{1/4}R_0$, $R_0\geq \sqrt{5}/2$, and $a=1$.
But $\Omega:= \Omega_-\setminus S_2$, 
a square with two unit squares removed from sides that point in different directions, supports parabolic trapping for the same wavenumbers as $\Omega_-$, but is not an $(R_0,R_1)$ obstacle for any choice of $R_1>\re^{1/4} R_0$, since, where $Z$ is given by \eqref{eq:Zdeff}, it does not hold that $Z(x)\cdot n(x)\geq 0$ for all $x\in \partial \Omega \cap \partial S_2$ for which $n(x)$ is defined. Since the sides from which the squares are removed point in different directions, the obstacle $\Omega$ is, moreover, not an $(R_0, R_1)$ obstacle in any coordinate system
\footnote{Thus Theorem \ref{thm:newintro} does not apply to $\Omega$. But we expect that a modified Theorem \ref{thm:newintro} holds for $\Omega$, based on a more elaborate construction of the vector field $Z$, since the arguments related to the construction of the vector field around a trapping region are to a large extent local.}.

\subsubsection{Resolvent estimates and bounds on the Dirichlet-to-Neumann (DtN) map}

In the following theorem $\chi$ is any function such that $\Omega_-$ is an $(R_0,R_1)$ obstacle in the sense of Definition \ref{def:trapb} and, for $k>0$ and $R> R_\Gamma$,
\begin{equation} \label{eq:normdefs}
\|u\|^2_{H^1_k(\Omega_R)} := \int_{\Omega_R} \left(|\nabla u|^2+k^2|u|^2\right) \rd x \;\; \mbox{ and } \;\; \|u\|^2_{H^1_k(\Omega_R;\chi)} := \int_{\Omega_R} \left(|\nabla u|^2+k^2|u|^2\right)\chi \rd x.
\end{equation}
The notation $A\lesssim B$ (or $B\gtrsim A$) means that $A \leq CB$, where the constant $C>0$ does not depend on $k$ or $f$ (but will depend on $\Omega_+$, $R$, and $k_0$). We write $A\sim B$ if $A\lesssim B$ and $A\gtrsim B$.
\begin{theorem}[Resolvent estimates] \label{thm:resol} Let $f\in L^2(\Omega_+)$ have compact support in $\overline{\Omega_+}$, and let $u\in H^1_{\mathrm{loc}}(\Omega_+)$ be a solution to the Helmholtz equation $\Delta u + k^2 u = -f$ in $\Omega_+$ that satisfies the Sommerfeld radiation condition
\begin{equation}\label{eq:src}
\pdiff{u}{r}(x)
- \ri k u(x) = o\left(\frac{1}{r^{(d-1)/2}}\right),
\end{equation}
as $r \tendi$, uniformly in $\widehat{x}:= x/r$, and the boundary condition $\gamma_+ u = 0$. If $\Omega_-$ is an $(R_0,R_1)$ obstacle for some $R_1>R_0>0$ then, for all $R> \max_{x\in \Gamma\cup \supp(f)}|x|$, given $k_0>0$,
\begin{equation}\label{eq:resol}
k^{-1}\|u\|_{H^1_k(\Omega_R)} + \|\partial_d u \|_{L^2(\Omega_R)} + \|u\|_{H^1_k(\Omega_R;\chi)}\lesssim k\|f\|_{L^2(\Omega_+)},
\end{equation}
for all $k\geq k_0$. If the support of $f$ does not intersect $B_{R_0}$ and
\begin{equation} \label{eq:fchinorm}
\|f\|_{L^2(\Omega_+;\chi^{-1})} := \left(\int_{\Omega_+} \frac{|f|^2}{\chi}\, \rd x\right)^{1/2}<\infty,
\end{equation}
then the bound \eqref{eq:resol} holds with $k\|f\|_{L^2(\Omega_+)}$ replaced by $\|f\|_{L^2(\Omega_+;\chi^{-1})}$.
\end{theorem}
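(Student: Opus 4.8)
The plan is to prove Theorem \ref{thm:resol} by a Morawetz-type vector-field/multiplier identity applied to the solution $u$ on the truncated domain $\Omega_R$, using the vector field $Z$ from \eqref{eq:Zdeff} (together with scalar multipliers to be chosen) rather than the classical Morawetz multiplier $x\cdot\nabla u + \beta u + \mathrm{i}k\,x\cdot\nabla u$-type combination. First I would test the Helmholtz equation $\Delta u + k^2 u = -f$ against a multiplier of the form $\mathcal{M}u := Z\cdot\overline{\nabla u} + \alpha(r)\,\overline{u} - \mathrm{i}k\,\beta(r)\,\overline{u}$ (real parts taken), integrate over $\Omega_R$, and integrate by parts. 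This produces a bulk (volume) term, a boundary term on $\Gamma$, and a boundary term on the artificial sphere $\partial B_R$. On $\Gamma$, since $\gamma_+u=0$ only the normal-derivative term survives and, crucially, the $(Z\cdot n)$ factor appears with the sign $Z(x)\cdot n(x)\ge 0$ guaranteed by Definition \ref{def:trapb}; choosing the scalar coefficients so that their contribution on $\Gamma$ vanishes or has a good sign, the $\Gamma$-boundary term is $\le 0$ and can be discarded. On $\partial B_R$ one uses the Sommerfeld radiation condition \eqref{eq:src} (plus the standard trick of sending an outer radius to infinity, or invoking the known far-field behaviour) to control or absorb that term, at the cost of a factor involving $R$ and $k$.

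The heart of the argument is the bulk term. For the vector field $Z = e_d x_d(1-\chi) + x\chi$, the derivative matrix $\mathrm{D}Z$ is, in the inner region $r\le R_0$, just $e_d\otimes e_d$ — semidefinite, not positive definite — which is exactly the structural difficulty flagged in the introduction. The quadratic form in $\nabla u$ coming from $\mathrm{D}Z$ controls only $|\partial_d u|^2$ there, which is why the left-hand side of \eqref{eq:resol} contains $\|\partial_d u\|_{L^2(\Omega_R)}$ and the $\chi$-weighted $H^1_k$ norm (full gradient control is only recovered where $\chi>0$, i.e.\ outside the trapping core). I would choose $\alpha(r)$ and $\beta(r)$ as suitable functions of $r$ — morally $\alpha$ comparable to $\tfrac12\dive Z$ adjusted to kill the bad $k^2|u|^2$ coefficient, and $\beta$ arranged so the cross terms and the $\partial_r$-type contributions have a sign — so that after collecting terms the bulk integrand is bounded below by $c\big(|\partial_d u|^2 + \chi(|\nabla u|^2 + k^2|u|^2)\big)$ minus controllable error terms supported in the transition annulus $R_0<r<R_1$. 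Those error terms are where the constraint $0\le r\chi'(r)<4$ and $\chi\in C^3$ enter: they must be dominated by the good terms, and it is precisely the requirement $R_1/R_0>\mathrm{e}^{1/4}$ (Remark \ref{rem:chi}) that makes this possible. The remaining right-hand side is the integral of $\mathcal{M}u$ against $f$, which by Cauchy–Schwarz is bounded by $\|f\|_{L^2}$ times a norm of $\mathcal{M}u$; since $\mathcal{M}u$ contains $\nabla u$ with coefficient $Z$ (of size $O(R)$, hence $O(1)$ on a fixed truncation) and $u$ with coefficient $O(k)$, this yields $\lesssim k\|f\|_{L^2(\Omega_+)}(\text{LHS of }\eqref{eq:resol})$, and absorbing one power gives \eqref{eq:resol}. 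For the improved statement when $\supp f\cap B_{R_0}=\emptyset$ and $\|f\|_{L^2(\Omega_+;\chi^{-1})}<\infty$, one instead pairs $f$ only against terms weighted by $\chi$ (which is legitimate since $f$ is supported where $\chi>0$), applies Cauchy–Schwarz with the weight $\chi^{-1}$ against $\chi$, and recovers the same left-hand side without the extra factor of $k$.

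The main obstacle, as anticipated in the excerpt, is handling the transition region: one needs the error terms generated by the non-constancy of $\chi$ (terms involving $\chi'$, $\chi''$, and the mismatch between the semidefinite $\mathrm{D}(e_dx_d)$ and the positive-definite $\mathrm{D}x=I$) to be absorbed by the positive terms, and this requires a careful, quantitatively explicit choice of $\alpha,\beta$ together with the $\mathrm{e}^{1/4}$ separation of radii. A secondary technical point is a density/regularity argument: the multiplier identity is first justified for smooth compactly supported functions and then extended to $u\in H^1_{\mathrm{loc}}(\Omega_+)$ solving the radiation problem, which for a Lipschitz obstacle requires care near $\Gamma$ (this is presumably where results like a Rellich-type identity valid for Lipschitz domains, or an approximation of $\Omega_-$ by smooth domains, are invoked). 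I would organise the write-up as: (1) state and prove the Morawetz–Rellich identity for smooth functions; (2) choose $\chi,\alpha,\beta$ and establish the coercivity of the bulk integrand modulo transition-region errors; (3) estimate the transition errors using (i)–(ii) of Definition \ref{def:trapb}; (4) dispose of the $\Gamma$-boundary term using $Z\cdot n\ge 0$; (5) dispose of the $\partial B_R$ term via \eqref{eq:src}; (6) apply Cauchy–Schwarz and absorb; (7) extend to $H^1_{\mathrm{loc}}$ solutions by density; (8) treat the $\chi^{-1}$-weighted variant.
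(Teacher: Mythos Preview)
Your outline correctly identifies the multiplier and the sign structure on $\Gamma$ and $\partial B_R$, and you rightly see that the bulk term controls only $|\partial_d u|^2$ in the region $r\le R_0$ and $\chi\big(|\nabla u|^2+k^2|u|^2\big)$ elsewhere. But this is precisely where there is a genuine gap: the left-hand side of \eqref{eq:resol} also contains the \emph{unweighted} term $k^{-1}\|u\|_{H^1_k(\Omega_R)}$, i.e.\ control of $|u|^2$ and $|\nabla u|^2$ throughout $\Omega_R$, including the trapping core where $\chi=0$. Nothing in your steps (1)--(6) produces this; the bulk coercivity you describe gives no estimate at all on $\int_{r<R_0}|u|^2$. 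The paper recovers this via a Poincar\'e--Friedrichs-type inequality (Lemma \ref{lem:Fried} / Corollary \ref{cor:Fried}) bounding $\int_{\Omega_{2R_0}}|u|^2$ by $R_0^2\int_{\Omega_R}|\partial_d u|^2$ plus $\int_{\Omega_R\setminus\Omega_{2R_0}}|u|^2$ (the latter controlled by the $\chi$-weighted term for $k$ large), followed by Green's identity to upgrade $k^2|u|^2$ to $|\nabla u|^2$. As the paper says explicitly in \S\ref{sec:idea}, this is exactly the step where two powers of $k$ are lost relative to the nontrapping estimate, and without it you cannot obtain the $k^{-1}\|u\|_{H^1_k(\Omega_R)}$ contribution.

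This missing ingredient is not a technicality but is essential even for your absorption step (6). The right-hand side contains $-2kR\,\Im\int_{\Omega_R} f\bar u$ from the $-\ri k\beta\bar u$ part of the multiplier (in the paper $\beta=R$ is constant). To absorb this you need $\epsilon\|u\|_{L^2(\Omega_R)}^2$ on the left, \emph{unweighted}; the $\chi$-weighted norm does not see $u$ in $B_{R_0}$. So your claim that the right-hand side is $\lesssim k\|f\|_{L^2}\cdot(\mbox{LHS of }\eqref{eq:resol})$ presupposes the very control you have not yet established. Two smaller points: the paper's $\alpha$ is not radial (it is $\tfrac12\nabla\cdot Z-\tfrac{q}{2}\chi$, which depends on $x_d$ through $Z$), and the $\Delta\alpha|u|^2$ term this generates is handled by a separate argument (Lemma \ref{lem:E5new}) requiring $\chi\in C^3$; also the $\re^{1/4}$ separation is simply the condition for a $\chi$ satisfying Definition \ref{def:trapb} to exist, not an additional constraint used to absorb transition errors.
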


This theorem contains the following important special cases.

1.~$\Oi$ is star-shaped and $R_1\leq \inf_{x\in \Gamma}|x|$.
In this case, since $\chi(r)=1$ for $r\geq R_1$, the bound recovers the standard bound when $\Omega_-$ is Lipschitz and star-shaped  that is sharp in its dependence on $k$ (see \cite{ChMo:08} and the discussion in \S\ref{sec:discuss}), namely
\begin{equation}\label{eq:resol2}
\|u\|_{H^1_k(\Omega_R)}\lesssim \|f\|_{L^2(\Omega_+)}, \quad \mbox{for } k\geq k_0.
\end{equation}

2.~$\Oi$ is an $(R_0,R_1,a)$ parallel trapping obstacle, such as those in Figures \ref{fig:twosquares} and \ref{fig:cavity}. In this case it holds that
\begin{equation}\label{eq:resol4}
\|u\|_{H^1_k(\Omega_R)}+ k\|u\|_{H^1_k(\Omega_R;\chi)}\lesssim k^2\|f\|_{L^2(\Omega_+)}.
\end{equation}
Furthermore, if, for some $R^\prime>R_0$, the support of $f$ does not intersect $B_{R^\prime}$ then
\begin{equation}\label{eq:resol3}
\|u\|_{H^1_k(\Omega_R)}+ k\|u\|_{H^1_k(\Omega_R;\chi)}\lesssim k\|f\|_{L^2(\Omega_+)}.
\end{equation}

The simple constructions at the end of \cite[\S3]{ChMo:08} show that, for every $(R_0,R_1)$ obstacle, there exists an $f$ supported outside $B_{R_1}$ such that $\|u\|_{H^1_k(\Omega_R;\chi)}\gtrsim \|f\|_{L^2(\Omega_+)}$, so that the power of $k$ in front of $\|u\|_{H^1_k(\Omega_R;\chi)}$ in \eqref{eq:resol3} is sharp, and that, for every $(R_0,R_1,a)$ parallel trapping obstacle, there exists a compactly supported $f$ such that
\begin{equation}\label{eq:resol5}
\|u\|_{H^1_k(\Omega_R)}\gtrsim k\|f\|_{L^2(\Omega_+)}, \quad  \mbox{for } k\in \{m\pi/a:m\in \mathbb{N}\}
\end{equation}
(this quantisation condition is a requirement that the length $a$ of the billiard orbits between the parallel sides of the trapping domain is a multiple of half the wavelength). These lower bounds show that
the power of $k$
on the right hand side of \eqref{eq:resol4} can be reduced at most from $k^2$ to $k$, and that on the right hand side of \eqref{eq:resol3} cannot be reduced.

In the following theorem we use the notation $\|\cdot\|_{H_k^s(\Gamma)}$ defined by equations \eqref{eq:Ganorm}--\eqref{eq:Ganorm3} below.

\begin{theorem}[Bounds on the DtN map] \label{thm:dtn} Let $u\in H^1_{\mathrm{loc}}(\Omega_+)$ be a solution to the Helmholtz equation $\Delta u + k^2 u = 0$ in $\Omega_+$ that satisfies the Sommerfeld radiation condition \eqref{eq:src} and the boundary condition $\gamma_+u=g$. If $\Omega_-$ is an $(R_0,R_1)$ obstacle for some $R_1>R_0>0$ then, for all $R> R_\Gamma$, given $k_0>0$,
\begin{equation}\label{eq:dtn2}
\|u\|_{H^1_k(\Omega_R)} + \|\partial_n^+ u\|_{L^2(\Gamma)} \lesssim k^2 \,\|g\|_{H_k^1(\Gamma)},
\end{equation}
for all $k\geq k_0$ if $g\in H^1(\Gamma)$. Further, uniformly for $0\leq s \leq 1$, provided $g\in H^{s}(\Gamma)$,
\begin{equation}\label{eq:dtn3}
\|\partial_n^+ u\|_{H_k^{s-1}(\Gamma)} \lesssim k^2 \|g\|_{H_k^{s}(\Gamma)}, \quad \mbox{and} \quad \|\partial_n^+ u\|_{H^{s-1}(\Gamma)} \lesssim k^3 \|g\|_{H^{s}(\Gamma)} \quad \mbox{for }k\geq k_0.
\end{equation}
If $g = -u^i|_\Gamma$, where $u^i$ satisfies $\Delta u^i + k^2 u^i=0$ in a neighbourhood $G$ of $\overline{\Omega_-\cup B_{R_0}}$, then
\begin{equation}\label{eq:dtn4}
\|u\|_{H^1_k(\Omega_R)} + \|\partial_n^+ u\|_{L^2(\Gamma)} \lesssim k^2 \sup_{x\in G}|u^i(x)|.
\end{equation}
\end{theorem}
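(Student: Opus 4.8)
The plan is to derive Theorem \ref{thm:dtn} from the resolvent estimate of Theorem \ref{thm:resol} by a standard reduction: represent the solution of the homogeneous Helmholtz equation with inhomogeneous Dirichlet data $g$ as a suitable extension plus a correction that solves a problem with zero boundary data and compactly supported right-hand side, then apply \eqref{eq:resol} to the correction. First I would fix a cutoff $\psi\in C^\infty_{\mathrm{comp}}(\R^d)$ with $\psi\equiv 1$ on a neighbourhood of $\overline{\Omega_-\cup B_{R_0}}$ and $\supp\psi\subset B_{R'}$ for some $R'$ slightly larger; and I would choose a Dirichlet lift $Eg\in H^1(\Omega_+)$ of $g$, with $\psi Eg$ compactly supported. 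Write $v := u - \psi Eg$. Then $v\in H^1_{\mathrm{loc}}(\Omega_+)$ satisfies $\gamma_+ v = 0$, the Sommerfeld radiation condition (since $\psi Eg$ has compact support), and $\Delta v + k^2 v = -f$ with $f := \Delta(\psi Eg) + k^2\psi Eg = (\Delta\psi)Eg + 2\nabla\psi\cdot\nabla Eg + \psi\Delta Eg + k^2\psi Eg$, which is compactly supported in $\overline{\Omega_+}$; applying Theorem \ref{thm:resol} to $v$ and adding back $\psi Eg$ gives a bound on $\|u\|_{H^1_k(\Omega_R)}$ (for the larger $R$ absorbing $\supp\psi$), from which $\|u\|_{H^1_k(\Omega_R)}$ for the originally requested $R$ follows trivially. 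For the normal-derivative bound $\|\partial_n^+ u\|_{L^2(\Gamma)}$ I would use the standard multiplicative trace inequality / the $H(\Delta,\Omega_R)\to H^{-1/2}(\Gamma)$ continuity of $\partial_n^+$ combined with elliptic regularity near $\Gamma$; since $\Delta u = -k^2 u$ has no independent right-hand side there, one gets $\|\partial_n^+ u\|_{L^2(\Gamma)}\lesssim k\|u\|_{H^1_k(\Omega_\rho)}$ for a thin shell, which feeds into the $k^2$ power. The crux of matching powers of $k$ is the $k$-explicit norm bookkeeping: one must check that $\|f\|_{L^2(\Omega_+)}\lesssim k\|g\|_{H^1_k(\Gamma)}$, which is where the extra power of $k$ (beyond the $k$ already present in \eqref{eq:resol}) comes from, giving $k^2$ overall.

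For the Sobolev-scale bounds \eqref{eq:dtn3} I would run the same argument at the endpoints $s=0$ and $s=1$ and then interpolate. At $s=1$ the estimate $\|\partial_n^+ u\|_{L^2(\Gamma)} = \|\partial_n^+ u\|_{H^0_k(\Gamma)}\lesssim k^2\|g\|_{H^1_k(\Gamma)}$ is exactly \eqref{eq:dtn2}. At $s=0$ I would bound $\|\partial_n^+ u\|_{H^{-1}_k(\Gamma)}$ in terms of $\|g\|_{L^2(\Gamma)}$: here I would use the integration-by-parts/duality characterisation of the DtN map, $\langle \partial_n^+ u, \phi\rangle = -\int_{\Omega_\rho}(\nabla u\cdot\nabla w - k^2 uw) + (\text{DtN term on the outer sphere})$ for a suitable extension $w$ of a test function $\phi\in H^1(\Gamma)$ localised near $\Gamma$, bounding $\|u\|_{H^1_k(\Omega_\rho)}$ by the resolvent estimate with a rougher lift $Eg$ (only $L^2$ data on $\Gamma$ controlled, which forces the extra $k$'s). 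Keeping careful track, the $s=0$ endpoint should come out as $\|\partial_n^+ u\|_{H^{-1}_k(\Gamma)}\lesssim k^2\|g\|_{L^2_k(\Gamma)}$ as well, and then the Sobolev interpolation between $(s,\text{weight})=(0,k^{-1})$ and $(1,k)$ endpoints — using the $k$-weighted interpolation scales defined in \S\ref{sec:interp} — yields \eqref{eq:dtn3}, with the unweighted $H^{s-1}(\Gamma)\to k^3 H^s(\Gamma)$ statement following by converting weights (each $H^s_k$-to-$H^s$ conversion costs a power of $k$).

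Finally, for \eqref{eq:dtn4}, the key point is that when $g=-u^i|_\Gamma$ with $u^i$ a genuine Helmholtz solution in the neighbourhood $G$ of $\overline{\Omega_-\cup B_{R_0}}$, one can take the extension $\psi Eg$ above to literally be $-\psi u^i$, so that $f = -[(\Delta\psi)u^i + 2\nabla\psi\cdot\nabla u^i + \psi\Delta u^i + k^2\psi u^i] = -[(\Delta\psi)u^i + 2\nabla\psi\cdot\nabla u^i]$ (the last two terms cancel by the Helmholtz equation), so $f$ is supported in the shell where $\nabla\psi\neq 0$, which is \emph{outside} $B_{R_0}$; hence the second, sharper part of Theorem \ref{thm:resol} applies with $\|f\|_{L^2(\Omega_+;\chi^{-1})}$. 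On that shell $\chi$ is bounded below by a positive constant, so $\|f\|_{L^2(\Omega_+;\chi^{-1})}\lesssim \|f\|_{L^2(\Omega_+)}$, and interior elliptic estimates for $u^i$ on the compact set $\supp(\nabla\psi)\subset G$ give $\|u^i\|_{H^1}(\supp\nabla\psi)\lesssim k\sup_G|u^i|$; combined with the trace bound for $\partial_n^+ u$ this produces the stated $k^2\sup_G|u^i|$. The main obstacle I anticipate is purely the $k$-explicit bookkeeping — choosing the lift $Eg$ and the test-function extension $w$ so that every step is quantitative in $k$ and the weighted norms $H^s_k(\Gamma)$ match up under interpolation — rather than any conceptual difficulty; the geometry and the hard analysis are entirely contained in Theorem \ref{thm:resol}, which we may assume.
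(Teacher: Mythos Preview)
Your overall strategy---reduce to zero Dirichlet data by subtracting a cut-off lift, apply Theorem~\ref{thm:resol} to the correction, then recover the normal-derivative bound---is the right one, and your treatment of \eqref{eq:dtn4} matches the paper's. But there is a genuine gap in the lift step that costs you exactly one power of $k$.

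You propose a generic Dirichlet lift $Eg\in H^1(\Omega_+)$, compute $f=\psi\Delta Eg+k^2\psi Eg+\ldots$, and claim $\|f\|_{L^2}\lesssim k\|g\|_{H^1_k(\Gamma)}$. Two problems. First, a generic $H^1$ lift does not put $\Delta Eg$ in $L^2$, so $f$ need not be in $L^2$ at all. Second, and more importantly, your power counting is wrong: Theorem~\ref{thm:resol} gives $\|v\|_{H^1_k(\Omega_R)}\lesssim k^2\|f\|_{L^2}$ (two powers, not one---note the $k^{-1}$ on the left of \eqref{eq:resol}), so to land on $k^2$ overall you need $\|f\|_{L^2}\lesssim \|g\|_{H^1_k(\Gamma)}$ with \emph{no} loss of $k$. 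This is not achievable with a generic lift: the term $k^2\psi Eg$ alone gives $\|f\|_{L^2}\gtrsim k^2\|Eg\|_{L^2}\sim k^2\|g\|_{L^2(\Gamma)}$, which against $\|g\|_{H^1_k(\Gamma)}\sim k\|g\|_{L^2(\Gamma)}$ is a loss of one $k$. The paper's trick (taken from \cite{BaSpWu:16}, and packaged here as Lemma~\ref{lem:recipe}) is to take $Eg=w$ where $w$ solves the \emph{dissipative} problem $\Delta w+(k^2+\ri k)w=0$, $\gamma_+w=g$; then Green's identity gives directly $\|w\|_{H^1_k(\Omega_+)}\lesssim\|g\|_{H^1_k(\Gamma)}$, and the right-hand side becomes $h=\ri k\psi w-w\Delta\psi-2\nabla w\cdot\nabla\psi$ with $\|h\|_{L^2}\lesssim\|w\|_{H^1_k}\lesssim\|g\|_{H^1_k(\Gamma)}$. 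The remark after Lemma~\ref{lem:recipe} in the paper notes explicitly that replacing this by the more na\"{i}ve lift (solving $\Delta w-k^2w=0$) loses a factor of $k$, which is precisely the regime your proposal is in.

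Two smaller points. Your claim $\|\partial_n^+u\|_{L^2(\Gamma)}\lesssim k\|u\|_{H^1_k(\Omega_\rho)}$ introduces an extra $k$ that is not there: the paper obtains $\|\partial_n^+u\|_{L^2(\Gamma)}\lesssim\|u\|_{H^1_k(\Omega_R)}+\|g\|_{H^1_k(\Gamma)}$ directly from a Rellich identity (Ne\v{c}as regularity, \cite[Lemma~2.3]{BaSpWu:16}). And for \eqref{eq:dtn3} you do not need to rerun the argument at $s=0$ with a rougher lift: since $\DtN$ is self-adjoint with respect to the real inner product, the bound at $s=1$ plus duality (equation~\eqref{eq:adj2}) gives the whole range $0\le s\le1$ in one line.
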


We derive the bounds \eqref{eq:dtn2} and \eqref{eq:dtn3} from the resolvent estimate in Theorem \ref{thm:resol} using the method in Baskin et al.\ \cite{BaSpWu:16} (a sharpening of previous arguments in \cite{LaVa:11,Sp:14}), which we capture below in Lemma \ref{lem:recipe}. (This method was used in \cite{BaSpWu:16} to deduce the sharp DtN map bound $\|\partial_n^+ u\|_{L^2(\Gamma)} \lesssim \|g\|_{H_k^1(\Gamma)}$, when $\Oi$ is nontrapping, from the resolvent estimate \eqref{eq:nt_estimate}/\eqref{eq:resol2}.)

We also apply Lemma \ref{lem:recipe} to write down DtN bounds for the two other trapping configurations for which resolvent estimates are known,
namely elliptic and hyperbolic trapping discussed in \S\ref{sec:4star}; see Corollaries \ref{cor:dtnws} and \ref{cor:dtnib} below.

The final bound \eqref{eq:dtn4} in Theorem \ref{thm:dtn} is derived from \eqref{eq:resol3}. To illustrate this result, suppose that $u$ in Theorem \ref{thm:dtn} is  the scattered field corresponding to an incident plane wave $u^i(x) = \exp(\ri k x\cdot \hat{a})$, for some unit vector $\hat{a}$, with $\gamma_+u = g = - u^i|_\Gamma$. Then $\|g\|_{H_k^1(\Gamma)}\sim k$ so that \eqref{eq:dtn2} implies $\|\partial_n^+ u\|_{L^2(\Gamma)} \lesssim k^3$ for $k\geq k_0$, while \eqref{eq:dtn4} implies the sharper bound
 $\|\partial_n^+ u\|_{L^2(\Gamma)} \lesssim k^2$.

\subsection{Discussion of  related results} \label{sec:discuss}

In \S\ref{sec:4star} we discussed the resolvent estimate in Theorem \ref{thm:resol} in the context of the nontrapping resolvent estimate \eqref{eq:nt_estimate} and the resolvent estimates for elliptic trapping \eqref{eq:Burq} and hyperbolic trapping \eqref{eq:Ikawa2}, all in the obstacle case. In this section we discuss Theorem \ref{thm:resol} in a slightly wider context.

\paragraph{Local energy decay and resonance-free regions.}

In the paper so far, we have only been concerned with resolvent estimates on the real axis (i.e.~for $k$ real), but establishing such an estimate is intimately related to (i) meromorphic continuation of the resolvent and resonance-free regions beneath the real axis, and (ii) local energy decay of the solution of the wave equation; results about the link between these three properties can be found in
\cite[Theorems 1.1 and 1.2]{Vo:99}, \cite[Proposition 4.4 and Lemma 4.7]{Bu:04}, \cite[Theorem 1.3]{BoPe:06}, \cite{Da:12}, \cite[Theorem 1.5]{Vo:14}, and \cite[Theorem 1]{In:17}, and overviews of results about resonances in obstacle scattering can be found in \cite[Page 24]{Zw:17}, \cite[Chapter 6]{DyZw:17}.
In particular, the result of Datchev \cite{Da:12} (suitably translated from the setting of scattering from a potential to the setting of scattering by an obstacle) could be used to prove that the resolvent estimate of Theorem \ref{thm:resol} holds for $k$ in a prescribed neighbourhood below the real axis, but we do not pursue this here.

\paragraph{Trapping by diffraction from corners.}
When a ray hits a corner of, say, a polygon, it produces diffracted rays emanating from the corner, and in particular some that travel along the sides of the polygon. This means that there exist glancing rays that travel around the boundary of the polygon (hitting a corner and then either continuing on the next side or travelling back) and do not escape to infinity; thus the exterior of a polygon is, in this sense, a trapping domain. At each diffraction from a corner, however, these rays lose energy, and thus the trapping is in a weaker sense than having a closed path of rays. Baskin and Wunsch \cite{BaWu:13} proved that the nontrapping resolvent estimate \eqref{eq:nt_estimate} holds when $\Oi$ is a \emph{nontrapping polygon}.

\begin{definition}[Nontrapping polygon \cite{BaWu:13}]\label{def:nt2}
$\Oi\subset \Rea^2$ is a \emph{nontrapping polygon} if $\Oi$ is a finite union of disjoint polygons such that: (i) no three vertices are colinear; and (ii),
given $R> R_\Gamma$, there exists a $T(R)<\infty$ such that
all the
billiard trajectories
that start in $\Omega_R$ at time zero and miss the vertices leave $\Omega_R$ by time $T(R)$. (For a more precise statement of (ii) see \cite[Section 5]{BaWu:13}.)
\end{definition}

\paragraph{Parabolic and degenerate hyperbolic trapping by metrics.}

In the setting of scattering by metrics, Christianson and Wunsch \cite{ChWu:13} exhibited a sequence of metrics, indexed by $m=1,2,\ldots$, where the case $m=1$ corresponds to a single trapped hyperbolic geodesic, but the hyperbolicity degenerates as $m$ increases, and for $m\geq 2$ the sharp bound
\beq\label{eq:ChWu}
\N{\cutoff}_{L^2\rightarrow L^2}\lesssim k^{-2/(m+1)}
\eeq
holds  (see also the review \cite{Wu:12}).
Observe that, as $m\tendi$, the right-hand side of the bound tends to $k^0$, i.e.~a constant. This case of infinite-degeneracy was studied by Christianson \cite{Ch:13}, who proved the bound
\beq\label{eq:ChWu2}
\N{\cutoff}_{L^2\rightarrow L^2}\lesssim k^{\eps}
\eeq
for any $\eps>0$ (where the omitted constant depends on $\eps$)
\cite[Theorem 1 and Proposition 3.8]{Ch:13}.
The analogue of the situation in \cite{ChWu:13} in the obstacle setting is two strictly convex obstacles being flattened (in the neighbourhood of the trapped ray), and the bounds \eqref{eq:ChWu} and \eqref{eq:ChWu2} are therefore consistent with an expectation that the sharp bound for an $(R_0,R_1,a)$ parallel trapping obstacle should be $\N{\cutoff}_{\LtLt}\lesssim 1$.

The situation of two convex obstacles being flattened was investigated by Ikawa in \cite{Ik:85} and \cite{Ik:94}, with \cite[Theorem 3.6.2]{Ik:94} bounding a mapping related to the resolvent in a region below the real axis but excluding neighbourhoods of the resonances. Although this estimate depends on the order of the degeneracy, it is not a resolvent estimate per se and does not apply everywhere on the real axis, so it does not appear to lead to a bound similar to \eqref{eq:ChWu}.

\paragraph{Obstacles rougher than Lipschitz.} The vector-field/commutator method of Morawetz can be used to obtain resolvent estimates for rough domains under the assumption of star-shapedness. Indeed, essentially this method was used in Chandler-Wilde and Monk \cite{ChMo:08} to prove the nontrapping resolvent estimate \eqref{eq:nt_estimate}, not only for Lipschitz star-shaped  $\Oi$ in 2- and 3-d, but also for $C^0$ star-shaped $\Oi$; indeed their proof of the resolvent estimate \eqref{eq:nt_estimate} assumes only that $\R^d\setminus \Omega_+$ is bounded, that $0\not\in \Omega_+$, and that if $x\in \Omega_+$ then $sx\in \Omega_+$ for every $s>1$ \cite[Lemma 3.8]{ChMo:08}.

\paragraph{Parallel trapping domains in rough surface scattering.}

A resolvent estimate with the same $k$-dependence as \eqref{eq:us} was proved for the Helmholtz equation posed above an unbounded rough surface in \cite[Theorem 4.1]{ChMo:05}. Denoting the domain above the surface by $\Oe$, the geometric assumption in \cite[Theorem 4.1]{ChMo:05} is that $x \in \Oe$ implies that $x+ se_d \in \Oe$ for all $s>0$, and that, for some $h\leq H$, $U_H\subset \Oe \subset U_h$, where $U_a:= \{x:x_d>a\}$; these conditions allow square/cube-shaped cavities in the surface, and therefore allow the same type of parabolic trapping as present for $(R_0,R_1,a)$ parallel trapping obstacles. At the beginning of  \S\ref{sec:resol} we discuss how the proof of Theorem \ref{thm:resol} uses ideas from the proof of \cite[Theorem 4.1]{ChMo:05}.

\subsection{Application to finite element discretisations}\label{sec:FEM}

A standard reformulation of the problem studied in Theorem \ref{thm:resol}, and the starting point for discretisation by finite element methods (FEMs) (e.g., \cite{MeSa:11}), is the variational problem \eqref{eq:weakst} below in which the unknown is $u_R:= u|_{\Omega_R}$, for some $R>R_\Gamma$, which lies in the Hilbert space $V_R:= \{w|_{\Omega_R}: w\in H^1_{\text{loc}}(\Omega_+)\mbox{ and }\gamma_+w=0\}$. The following corollary bounds the inf-sup constant in this formulation, the upper bound \eqref{eq:ses2} taken from \cite{ChMo:08}.

\begin{corollary}[Bound on the inf-sup constant] \label{cor:infsup}  If $\Omega_-$ is an $(R_0,R_1)$ obstacle for some $R_1>R_0>0$,  then, for all $R> R_\Gamma$, given $k_0>0$,
\begin{equation} \label{eq:infsupfirst}
\beta_R := \inf_{0\neq u\in V_R}\, \sup_{0\neq v\in V_R} \frac{|a(u,v)|}{\|u\|_{H^1_k(\Omega_R)}\|v\|_{H^1_k(\Omega_R)}} \gtrsim k^{-3}
\end{equation}
for all $k\geq k_0$, where
\begin{equation} \label{eq:ses}
a(u,v) := \int_{\Omega_R} (\nabla u\cdot \overline{\nabla v} - k^2 u \bar v)\,\rd x - \int_{\Gamma_R} \overline{\gamma  v} \,P_R^+ \gamma u \, \rd s, \quad \mbox{for }u,v\in V_R,
\end{equation}
is the sesquilinear form in \eqref{eq:weakst}. Here $\gamma$ is the trace operator from $\Omega_R$ to $\Gamma_R:= \partial B_R$, and $P_R^+$ is the DtN map in the case that $\Omega_-=B_R$ and $\Gamma=\Gamma_R$. Further, if $\Omega_+$ is an $(R_0,R_1,a)$ parallel trapping obstacle for some $a>0$, then
\begin{equation}\label{eq:ses2}
\beta_R \lesssim k^{-2}, \quad  \mbox{for } k\in \{m\pi/a:m\in \mathbb{N}\}.
\end{equation}
\end{corollary}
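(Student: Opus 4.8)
The plan is to prove the lower bound \eqref{eq:infsupfirst} by the standard duality/adjoint-solution argument that converts a resolvent estimate into an inf-sup bound, and to prove the upper bound \eqref{eq:ses2} by quoting the construction from \cite{ChMo:08} applied to the quasimode/lower-bound $f$ whose existence is asserted in \eqref{eq:resol5}. For the lower bound, fix $0\neq u\in V_R$. Since the radiating Helmholtz problem with exterior DtN truncation on $\Gamma_R$ is well posed, there is a unique $w\in V_R$ solving $a(v,w)=(u,v)_{H^1_k(\Omega_R)}$ for all $v\in V_R$, i.e.\ $w$ is the solution of the adjoint (or conjugate) boundary value problem with right-hand side $-k^2\bar u + \ldots$; equivalently, $w=\overline{\tilde w}$ where $\tilde w$ solves the original radiating problem with a data term controlled by $\|u\|_{H^1_k(\Omega_R)}$. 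Taking $v=u$ gives $|a(u,w)| = \|u\|_{H^1_k(\Omega_R)}^2$, so
\[
\beta_R \;\geq\; \frac{\|u\|_{H^1_k(\Omega_R)}^2}{\|u\|_{H^1_k(\Omega_R)}\,\|w\|_{H^1_k(\Omega_R)}} \;=\; \frac{\|u\|_{H^1_k(\Omega_R)}}{\|w\|_{H^1_k(\Omega_R)}},
\]
and it remains to show $\|w\|_{H^1_k(\Omega_R)}\lesssim k^3\|u\|_{H^1_k(\Omega_R)}$. Here $w$ solves a Helmholtz problem in $\Omega_+$ with Dirichlet condition $\gamma_+w=0$, the Sommerfeld radiation condition, and right-hand side $f:=$ (the $L^2$-Riesz representative of $v\mapsto k^2(u,v)_{L^2}+\nabla$-terms), which has compact support in $\overline{\Omega_R}$ and satisfies $\|f\|_{L^2(\Omega_+)}\lesssim k\|u\|_{H^1_k(\Omega_R)}$ (the extra factor of $k$ coming from the $k^2|u|^2$ term against the $H^1_k$-norm). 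Applying the resolvent estimate \eqref{eq:resol} of Theorem \ref{thm:resol} to $w$ gives $\|w\|_{H^1_k(\Omega_R)}\lesssim k\cdot k\|f\|_{L^2(\Omega_+)}\lesssim k^3\|u\|_{H^1_k(\Omega_R)}$, which is exactly what is needed. (One must check that the radiating solution of the Helmholtz equation in $\Omega_+$ restricted to $\Omega_R$ coincides with the solution of the truncated variational problem with form $a(\cdot,\cdot)$; this is the standard equivalence between the exterior DtN-truncated weak form and the original scattering problem, and is where the definition of $P_R^+$ and the identity $a(u,v)=\int_{\Omega_R}(\nabla u\cdot\overline{\nabla v}-k^2u\bar v) - \int_{\Gamma_R}\overline{\gamma v}\,P_R^+\gamma u$ are used.)

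For the upper bound \eqref{eq:ses2}, the idea is to exhibit, for each $k=m\pi/a$, a near-null element of the form $a(\cdot,\cdot)$ built from the quasimode underlying \eqref{eq:resol5}. By the construction at the end of \cite[\S3]{ChMo:08} quoted after \eqref{eq:resol5}, for an $(R_0,R_1,a)$ parallel trapping obstacle there is a compactly supported $f_k$ with $\|u_k\|_{H^1_k(\Omega_R)}\gtrsim k\|f_k\|_{L^2(\Omega_+)}$, where $u_k$ is the corresponding radiating solution; let $u_k$ also denote its restriction to $\Omega_R$, so $u_k\in V_R$. Then $a(u_k,v) = (f_k,v)_{L^2(\Omega_R)}$ for all $v\in V_R$, so
\[
\sup_{0\neq v\in V_R}\frac{|a(u_k,v)|}{\|v\|_{H^1_k(\Omega_R)}} \;=\; \sup_{0\neq v}\frac{|(f_k,v)_{L^2}|}{\|v\|_{H^1_k(\Omega_R)}} \;\leq\; \|f_k\|_{L^2(\Omega_+)} \;\lesssim\; \frac{1}{k}\,\|u_k\|_{H^1_k(\Omega_R)},
\]
using $\|v\|_{L^2}\leq k^{-1}\|v\|_{H^1_k(\Omega_R)}$. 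Dividing by $\|u_k\|_{H^1_k(\Omega_R)}$ and taking the infimum over $u\in V_R$ in the definition of $\beta_R$ gives $\beta_R\leq C k^{-2}$ for $k\in\{m\pi/a:m\in\mathbb N\}$, which is \eqref{eq:ses2}.

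The main obstacle I expect is purely bookkeeping on the first half: correctly identifying the right-hand side of the adjoint problem (it is a full $H^1_k$-Riesz representative, not just an $L^2$ one, so one must integrate by parts / use that $u$ itself solves no PDE and hence $f$ genuinely lives in $L^2$ only after absorbing the gradient term, giving the factor $k$), and verifying the equivalence of the DtN-truncated weak formulation on $\Omega_R$ with the exterior radiating problem so that Theorem \ref{thm:resol} is directly applicable to $w$. Both are standard (cf.\ \cite{MeSa:11}, \cite{ChMo:08}), so the corollary follows by assembling these pieces; the polynomial gap between $k^{-3}$ and $k^{-2}$ mirrors exactly the gap between the upper bound $k\|f\|_{L^2}$ and the lower bound \eqref{eq:resol5} in Theorem \ref{thm:resol}.
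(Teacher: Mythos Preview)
Your upper-bound argument for \eqref{eq:ses2} is correct and is precisely the converse direction of Lemma~\ref{lem:CWMo} applied to the quasimode from \eqref{eq:resol5}; this matches the paper.

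Your lower-bound argument, however, has a genuine gap that is more than ``bookkeeping''. You define $w$ by $a(v,w)=(u,v)_{H^1_k(\Omega_R)}$ for all $v\in V_R$ and then assert that $w$ solves a Helmholtz equation with right-hand side $f\in L^2$ satisfying $\|f\|_{L^2}\lesssim k\|u\|_{H^1_k}$. But the functional $v\mapsto (u,v)_{H^1_k}=\int_{\Omega_R}(\nabla u\cdot\overline{\nabla v}+k^2u\bar v)\,\rd x$ is, for generic $u\in V_R$, only in the dual of $H^1_k(\Omega_R)$: integrating by parts would produce $-\Delta u$, which is not in $L^2$ since $u$ solves no PDE. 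Thus Theorem~\ref{thm:resol}, which requires $f\in L^2(\Omega_+)$, cannot be applied to $w$, and the claimed bound $\|w\|_{H^1_k}\lesssim k^3\|u\|_{H^1_k}$ is unproven.

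The paper's route, via Lemma~\ref{lem:CWMo} (taken from \cite[Lemmas~3.3,~3.4]{ChMo:08}), avoids this by first invoking the G\aa rding-type inequality $\Re a(u,u)+2k^2\|u\|_{L^2}^2\geq \|u\|_{H^1_k}^2$ (which uses the sign property of $P_R^+$), and then solving an adjoint problem whose data is $2k^2 u\in L^2$, with $\|2k^2 u\|_{L^2}\leq 2k\|u\|_{H^1_k}$. The resolvent estimate \eqref{eq:resol} then applies directly with $L\sim k^2$, giving $\beta_R\geq (1+2kL)^{-1}\gtrsim k^{-3}$. The G\aa rding step is what lets one work with an $L^2$ right-hand side; your attempt to use the full $H^1_k$ inner product skips this and lands on a problem to which the resolvent estimate does not apply.
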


We point out in Remark \ref{rem:resinfsup} (and see Table \ref{tab:Ainv}) that the arguments (derived from \cite{ChMo:08}) that we use to derive the lower bound on $\beta_R$ from the resolvent estimates in Theorem \ref{thm:resol} apply whenever a resolvent estimate is available. Thus we can also write down lower bounds on $\beta_R$ for the worst case of elliptic trapping, and for the case of the mild hyperbolic trapping between two smooth strictly convex obstacles: see Remark \ref{rem:resinfsup}  and Table \ref{tab:Ainv} for details.

Our results also prove the missing assumption needed to apply the wavenumber-explicit $hp$-finite element analysis of Melenk and Sauter \cite{MeSa:11} to problems of obstacle scattering when $\Omega_+$ is trapping. Suppose that $\Gamma$ is analytic, $R>R_\Gamma$, and let $\mathcal{T}_h$ be a quasi-uniform triangulation of $\Omega_R$ in the sense of \cite[Assumption 5.1]{MeSa:11}, with $h:=\max_{K\in \mathcal{T}_h}\mathrm{diam}(K)$ the maximum element diameter. Let $\mathcal{S}_0^{p,1}(\mathcal{T}_h):= \mathcal{S}^{p,1}(\mathcal{T}_h)\cap V_R$, where $\mathcal{S}^{p,1}(\mathcal{T}_h)$ is the space of continuous, piecewise polynomials of degree $\leq p$ on the triangulation $\mathcal{T}_h$  \cite[Equation (5.1)]{MeSa:11}. Then a (Galerkin) finite element approximation, $u_{hp}\in \mathcal{S}_0^{p,1}(\mathcal{T}_h)$, to the solution $u_R$ of \eqref{eq:weakst} is defined by
$$
a(u_{hp},v_{hp}) = G(v_{hp}), \quad \mbox{for all }v_{hp}\in \mathcal{S}_0^{p,1}(\mathcal{T}_h),
$$
where the anti-linear functional $G$ is given by \eqref{eq:Gdef}.
Melenk and Sauter's results imply  that if, given $k_0>0$, there exists $q\geq 1$ such that
\beq \label{eq:bpoly}
\beta_R \gtrsim k^{-q}, \quad \mbox{for } k\geq k_0,
\eeq
then the finite element method is quasi-optimal, i.e.\
\beq \label{eq:qo}
\|u_R-u_{hp}\|_{H^1_k(\Omega_R)} \leq C \inf_{v_{hp}\in \mathcal{S}_0^{p,1}(\mathcal{T}_h)}\|u_R-v_{hp}\|_{H^1_k(\Omega_R)},
\eeq
provided that $p$ increases logarithmically with $k$, and $N_{hp}$, the degrees of freedom (the dimension of the subspace $\mathcal{S}_0^{p,1}(\mathcal{T}_h)$), increases with $k$ so as to maintain a fixed number of degrees of freedom per wavelength (so that $N_{hp} \sim k^{d}$). This is a strong result, in particular the ``pollution effect'' \cite{BaSa:00} that arises with standard $h$-version finite element methods, which implies a requirement to increase  $N_{hp}$ at a faster rate than $k^{d}$, is avoided, and this analysis is fully wavenumber-explicit (the constant $C$ in \eqref{eq:qo} is independent of $k$, $h$, and $p$). However, the result in \cite{MeSa:11} is established only for the case when $\Omega_-$ is star-shaped with respect to a ball. Corollary \ref{cor:infsup} implies that \eqref{eq:bpoly}, and hence also \eqref{eq:qo}, applies also for $(R_0,R_1)$ obstacles with  $\Gamma$ analytic. This class includes many domains $\Omega_+$ that allow trapped periodic orbits, though not $(R_0,R_1,a)$ parallel trapping obstacles for which $\Gamma$ is not analytic. However, we expect that a version of \eqref{eq:qo} can be proved for $(R_0,R_1,a)$ parallel trapping obstacles in 2-d that are polygonal, by combining Corollary \ref{cor:infsup} with the wavenumber-explicit $hp$-FEM analysis for non-convex polygonal domains in \cite{EsMe:12}.

\subsection{Our main results for boundary integral equations}\label{sec:BIE}

The results of Theorems \ref{thm:dtn} can be used to prove results about integral equations. Our main result concerns the standard boundary integral equation formulations of the Helmholtz exterior Dirichlet problem.

If $u$ is the solution to the Helmholtz exterior Dirichlet problem, the Neumann trace of $u$, $\dnpu$, satisfies the integral equation
\beq\label{eq:CFIE}
\opA \, \dnpu = f_{k,\eta}
\eeq
on $\Gamma$, where the integral operator $\opA$ is the so-called \emph{combined-potential} or \emph{combined-field} integral operator (defined by \eqref{eq:CFIEdef} below), the parameter $\eta$ is a real constant different from zero, and
$f_{k,\eta}$ is given in terms of the known Dirichlet data $\gamma_+ u$ (see \eqref{eq:CFIE2}).  The equation \eqref{eq:CFIE} also arises in so-called {\em sound soft} scattering problems in which $u$ is interpreted as the scattered field corresponding to an incident field $u^i$, the total field $u^t:=u+u^i$ satisfies $\gamma_+u^t=0$ on $\Gamma$, and \eqref{eq:CFIE} is satisfied by $\partial_n^+u^t$ with $f_{k,\eta}$ given in terms of Dirichlet and Neumann traces of $u^i$ on $\Gamma$; see \eqref{eq:f}. The other standard integral equation for the exterior Dirichlet problem  (\eqref{eq:BW} below) takes the form $A_{k,\eta}\phi= h$, where $h=\gamma^+u$ and $A_{k,\eta}$ is the adjoint of $\opA$ with respect to the real inner product on $L^2(\Gamma)$ (as defined below \eqref{eq:interp}).

\subsubsection{Bounds on $\opAinv$ and $\opABWinv$}\label{sec:151}

The following corollary gives bounds on $\opAinv$; bounds on $\opABWinv$ follow by duality (see \eqref{eq:duals} and \eqref{eq:duals2} below).

\begin{corollary}[Bounds on $\opAinv$]\label{cor:CFIE}
Suppose that the (finite number of) disjoint components of the Lipschitz open set $\Omega_-$ are each either star-shaped with respect to a ball or $C^\infty$, that $\Omega_-$ is an $(R_0,R_1)$ obstacle for some $R_1>R_0>0$, and that $\eta = ck$, for some $c\in \R\setminus\{0\}$.
Then, given $k_0>0$, for all $k\geq k_0$,
\beq\label{eq:Ainv_bound_main}
\normAinv_{\LtGt} \lesssim k^2;
\eeq
indeed
\beq\label{eq:Ainv_bound_main1}
\normAinv_{H_k^s(\Gamma)\to H_k^s(\Gamma)} \lesssim k^2\quad \mbox{ and } \quad \normAinv_{H^s(\Gamma)\to H^s(\Gamma)} \lesssim k^{2-s},
\eeq
for $-1\leq s\leq 0$.
If $\Omega_-$ is an $(R_0,R_1,a)$ parallel trapping obstacle for some $a>0$, then
\beq\label{eq:Ainv_bound_mainlower}
\normAinv_{\LtGt} \gtrsim k,\quad  \mbox{for } k\in \{m\pi/a:m\in \mathbb{N}\}.
\eeq
\end{corollary}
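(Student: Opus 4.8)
The plan is to treat the upper and lower bounds separately: the upper bounds \eqref{eq:Ainv_bound_main}--\eqref{eq:Ainv_bound_main1} are to be deduced from the Dirichlet-to-Neumann bounds of Theorem~\ref{thm:dtn} by the ``black-box'' passage of Lemma~\ref{lem:recipe}, while the lower bound \eqref{eq:Ainv_bound_mainlower} is to be obtained directly by feeding the explicit parallel-trapping quasimode behind \eqref{eq:resol5} into the integral equation \eqref{eq:CFIE}.

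For the upper bounds, I would first recall the standard route from the exterior Dirichlet problem to \eqref{eq:CFIE}: if $u$ solves that problem with data $g = \gamma_+ u$, then Green's representation theorem gives $u = \mathcal{D}_k g - \mathcal{S}_k(\partial_n^+ u)$ in $\Oe$, and taking the trace and the exterior normal derivative and forming the combined field yields $\opA\,\partial_n^+ u = f_{k,\eta}$, where $f_{k,\eta}$ is built from $g$ by the single- and double-layer boundary operators $S_k, D_k, D_k'$ and the hypersingular operator. Inverting this relation expresses $\opAinv$ as the identity composed with the ``solution operator'' $g\mapsto\partial_n^+ u$ of the exterior Dirichlet problem---bounded in Theorem~\ref{thm:dtn}---and with those layer operators; this is precisely the passage made precise in Lemma~\ref{lem:recipe} (extended from the DtN map to the integral operators as in \cite{Sp:14,BaSpWu:16}). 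The standing hypothesis that every component of $\Oi$ is star-shaped with respect to a ball or is $C^\infty$ is exactly what supplies the $k$-explicit mapping bounds for these layer operators on $\LtG$, $H^s(\Gamma)$ and $H_k^s(\Gamma)$ (Morawetz/Rellich-identity bounds for a star-shaped Lipschitz component, as in \cite{ChMo:08,Sp:14}; sharp semiclassical bounds for a $C^\infty$ component; and trivial bounds for the off-diagonal, non-singular cross-component terms). Composing these with $\eta = ck$ gives $\normAinv_{\LtGt}\lesssim k^2$ and, from the $H_k^s(\Gamma)$-valued bound in \eqref{eq:dtn3} together with interpolation between $s=0$ and $s=-1$, $\normAinv_{H_k^s(\Gamma)\to H_k^s(\Gamma)}\lesssim k^2$ for $-1\le s\le 0$; the unweighted bound $\normAinv_{H^s(\Gamma)\to H^s(\Gamma)}\lesssim k^{2-s}$ then follows from the $k$-explicit norm equivalences $\|v\|_{H_k^s(\Gamma)}\le\|v\|_{H^s(\Gamma)}\le k^{-s}\|v\|_{H_k^s(\Gamma)}$, valid for $s\le 0$ and $k\ge k_0$.

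For the lower bound, suppose $\Oi$ is an $(R_0,R_1,a)$ parallel trapping obstacle (e.g.\ two parallel squares, Figure~\ref{fig:twosquares}, for which the standing hypotheses also hold) and $k = m\pi/a$ with $m\in\mathbb{N}$. I would use the explicit quasimode behind \eqref{eq:resol5} (the construction at the end of \cite[\S3]{ChMo:08}): writing $t\in(0,a)$ for the coordinate along the trapped billiard orbit crossing the cavity $\Omega_C$ of Definition~\ref{def:ptd}, and $\psi$ for a fixed smooth cutoff supported where the two faces of $\Gamma$ are parallel, set $u_m := \psi\sin(m\pi t/a)$ in $\Omega_C$ and $u_m := 0$ elsewhere in $\Oe$. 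Then $u_m\in H^1_{\mathrm{loc}}(\Oe)$, $\gamma_+ u_m = 0$, $u_m$ has compact support (so \eqref{eq:src} holds trivially), and $\Delta u_m + k^2 u_m = -f_m$ in $\Oe$ with $f_m$ compactly supported and $\|f_m\|_{L^2(\Oe)}\sim 1$; by uniqueness for the exterior Dirichlet problem, $u_m$ \emph{is} the radiating solution with right-hand side $f_m$ and zero Dirichlet data. Put $\phi_m := \partial_n^+ u_m\in\LtG$. A direct computation at the two parallel faces gives $\partial_n^+ u_m = \pm(m\pi/a)\psi = \pm k\psi$ there and $0$ elsewhere, so $\|\phi_m\|_{\LtG}\sim k$. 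On the other hand, the derivation of \eqref{eq:CFIE} applied to $u_m$ (whose Dirichlet trace vanishes) gives $\opA\phi_m = (\partial_n^+ - \ri\eta\gamma_+)(\mathcal{N}_k f_m)|_\Gamma$, where $\mathcal{N}_k f_m(x) := \int_{\Oe}\Phi_k(x,y) f_m(y)\,\rd y$ is the Newtonian potential of the quasimode error. Hence
\begin{equation*}
\normAinv_{\LtGt} \;\ge\; \frac{\|\phi_m\|_{\LtG}}{\|\opA\phi_m\|_{\LtG}} \;\gtrsim\; \frac{k}{\|(\partial_n^+ - \ri\eta\gamma_+)\mathcal{N}_k f_m\|_{\LtG}},
\end{equation*}
and \eqref{eq:Ainv_bound_mainlower} follows once one shows $\|(\partial_n^+ - \ri\eta\gamma_+)\mathcal{N}_k f_m\|_{\LtG}\lesssim 1$ uniformly in $m$.

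This last estimate is the main obstacle. The crude bounds $\|\gamma_+\mathcal{N}_k f_m\|_{\LtG}\lesssim\|f_m\|_{L^2(\Oe)}\sim 1$ and $\|\partial_n^+\mathcal{N}_k f_m\|_{\LtG}\lesssim k\|f_m\|_{L^2(\Oe)}\sim k$, together with $|\eta|\sim k$, only yield $\normAinv_{\LtGt}\gtrsim 1$. The required gain comes from the \emph{resonant} structure of the error: $f_m$ equals $\sin(m\pi t/a) = \sin(kt)$ times a fixed transverse profile, so it vanishes on $\Gamma$ and oscillates exactly out of phase with the contributions to $\nabla\mathcal{N}_k f_m$ near $\Gamma$ that carry the factor $k$. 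I expect that an integration by parts in $t$ (using $\partial_t^2\sin(kt) = -k^2\sin(kt)$) against the off-diagonal part of the kernel, or equivalently a stationary/non-stationary phase analysis of the oscillatory integral defining $\partial_n^+\mathcal{N}_k f_m$ over the boundary $\Gamma$ (which is piecewise flat near the cavity), will show this factor of $k$ is in fact absent, yielding $\|(\partial_n^+ - \ri\eta\gamma_+)\mathcal{N}_k f_m\|_{\LtG}\lesssim 1$ (the heuristic suggests even $\lesssim k^{-1/2}$, consistent with the gap of at most one power of $k$ between \eqref{eq:Ainv_bound_main} and \eqref{eq:Ainv_bound_mainlower}). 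Making this oscillatory-integral estimate precise and uniform in $m$, over the Lipschitz boundary $\Gamma$, is the technical heart of the lower bound and is carried out in Lemma~\ref{lem:smysh}.
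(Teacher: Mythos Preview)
Your upper-bound argument misidentifies where the geometric hypothesis on the components of $\Oi$ enters. It is \emph{not} used to bound the layer operators $S_k,D_k,D_k'$; in fact no layer-operator bounds are needed, because the key identity is \eqref{eq:key},
\[
\opAinv \;=\; I \;-\; \DtN\,\ItD \;+\; \ri\eta\,\ItD,
\]
where $\ItD$ is the \emph{interior impedance-to-Dirichlet map} on $\Oi$. The assumption that each component is star-shaped with respect to a ball or $C^\infty$ is precisely what gives the sharp bound $\|\ItD\|_{H^s_k(\Gamma)\to H^{s+1}_k(\Gamma)}\lesssim 1$ (Theorem~\ref{cor:ItD}); combining this with the DtN bound of Theorem~\ref{thm:dtn} (equivalently, the resolvent estimate of Theorem~\ref{thm:resol} fed through Lemma~\ref{lem:recipe}) and $|\eta|\sim k$ yields \eqref{eq:Ainv_bound_main1} directly. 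This is Lemma~\ref{lem:recipe2}, not Lemma~\ref{lem:recipe}. Your proposed route, inverting the relation $\opA\,\partial_n^+u=\opBM g$ via layer-operator bounds, does not produce a usable formula for $\opAinv$ (note $\opBM$ is not invertible $L^2\to L^2$).

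For the lower bound your construction is essentially that of \cite{ChGrLaLi:09}, and as the paper records in \S6.3.1, estimating the oscillatory volume integral $(\partial_n^+ -\ri\eta\gamma_+)\mathcal{N}_k f_m$ over $\supp(f_m)\subset\Omega_C$ led there only to $\|\opAinv\|\gtrsim k^{9/10}$. More importantly, Lemma~\ref{lem:smysh} does \emph{not} carry out the stationary-phase estimate you describe. It uses a different and sharper mechanism: it prescribes a density $\phi$ (with $\|\phi\|\sim 1$) directly on the two parallel faces $\Gamma_1\cup\Gamma_2$, sets $u=\mathcal{S}_k\phi$ as a single-layer potential so that $\opA\phi=\partial_n^- u - \ri\eta\gamma_- u$ is \emph{interior} impedance data, and then evaluates this via the explicit spatial Fourier representation of the half-space single-layer potential, choosing the relative phase of $\phi|_{\Gamma_1}$ and $\phi|_{\Gamma_2}$ so that the leading contributions cancel for $k\in\{m\pi/a\}$, giving $\|\opA\phi\|_{L^\infty(\Gamma)}=\cO(k^{-1})$. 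The two constructions are linked (\S6.3.1 explains that your $\phi_m$ is, up to scaling, the same density), but it is the boundary Fourier representation, not a volume oscillatory-integral estimate, that delivers the sharp rate.
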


\begin{definition}[Piecewise smooth] \label{def:psmooth}
We say that  the bounded Lipschitz open set $\Omega_-$ and its boundary $\Gamma$ are {\em piecewise smooth} if $\Gamma$ can be written as a finite union $\Gamma=\cup_{j=1}^M\overline{\Gamma_j}$ where each $\Gamma_j$ is relatively open in $\Gamma$, the $\Gamma_j$ are pairwise disjoint, $\Gamma_{\mathrm{sing}}:= \Gamma \setminus \cup_{j=1}^M\Gamma_j$ has zero surface measure, and each $\Gamma_j\subset \widetilde \Gamma_j$, where $\widetilde \Gamma_j$ is the boundary of a bounded $C^\infty$ open set.
\end{definition}

\begin{remark}[Extensions of Corollary \ref{cor:CFIE}] \label{rem:A} If the components of $\Omega_-$ are not all $C^\infty$ or star-shaped with respect to a ball, using the bounds from Theorem \ref{cor:ItD} that apply in more general cases it  follows that \eqref{eq:Ainv_bound_main} and \eqref{eq:Ainv_bound_main1} still hold but with $k^2$ and $k^{2-s}$ replaced by $k^{9/4}$ and $k^{9/4-s}$, respectively, if each component of $\Omega_-$ is piecewise smooth or star-shaped with respect to a ball, with $k^2$ and $k^{2-s}$ replaced by $k^{5/2}$ and $k^{5/2-s}$, respectively, in the general case.
\end{remark}

\bre[How sharp are the bounds in Corollary \ref{cor:CFIE}?]
The numerical computations in \cite[\S4.7]{BeChGrLaLi:11} and  \cite[Example 5.2]{GrLoMeSp:15} give an example of an $(R_0,R_1,a)$ parallel trapping domain for which, when $\eta=\pm k$, $\normAinv_{\LtGt} \sim k$ (at least for the range of $k$ considered in the experiments), i.e., they indicate that the lower bound rate of $k$ in \eqref{eq:Ainv_bound_mainlower} is sharp.
\ere

\bre[Previous upper bounds on $\opAinv$]\label{rem:CFIE}
There have been three previous upper bounds for $\normAinv_{L^2(\Gamma)\to L^2(\Gamma)}$ proved in the literature, all for nontrapping cases. The first  is the bound
\beq \label{eq:Ainvknown}
\normAinv_{L^2(\Gamma)\to L^2(\Gamma)} \lesssim 1 + \frac{k}{|\eta|}, \quad \mbox{for } k>0,
\eeq
 for the case when $\Omega_-$ is Lipschitz and star-shaped with respect to a ball \cite[Theorem 4.3]{ChMo:08} (\cite{ChMo:08} assumes additionally that $\Gamma$ is piecewise smooth, but this requirement can be avoided using density results from \cite[Lemmas 2 and 3]{CoDa:98}; see \cite[Remark 3.8]{Sp:14}).
 The second is the bound  $\normAinv_{L^2(\Gamma)\to L^2(\Gamma)} \lesssim 1 $ when $\Oe$ is nontrapping (in the sense of Definition \ref{def:nt1}) and $|\eta|\sim k$ \cite[Theorem 1.13]{BaSpWu:16}. The third is the bound, given $k_0>0$, that
\beq \label{eq:Ainvntp}
\normAinv_{L^2(\Gamma)\to L^2(\Gamma)} \lesssim k^{5/4}\left(1 + \frac{k^{3/4}}{|\eta|}\right), \quad \mbox{for } k\geq k_0,
\eeq
when $\Omega_-$ is a nontrapping polygon \cite[Theorem 1.11]{Sp:14}. (In \S\ref{sec:other} below we improve this bound, when $|\eta|\sim k$,  to $\normAinv_{L^2(\Gamma)\to L^2(\Gamma)} \lesssim k^{1/4}$ as a corollary of results in \cite{BaSpWu:16}.)

The only other known bound on $\opAinv$ appears in the thesis of the third author \cite[Theorem 5.19]{Gi:17} and we obtain a sharpened and generalised form of it as \eqref{eq:AinvknownGibbs} below.
\ere

The upper bounds on $\normAinv_{\LtGt}$ in Corollary \ref{cor:CFIE} and Remark \ref{rem:CFIE} use the representation \eqref{eq:key} below that expresses $\opAinv$ in terms of the exterior DtN map and an interior impedance to Dirichlet map (the use of this representation in \cite{ChMo:08} was implicit, and the representation was stated explicitly for the first time as \cite[Theorem 2.33]{ChGrLaSp:12}).
We prove the bound \eqref{eq:Ainv_bound_main} in the same way, using the DtN map bound \eqref{eq:dtn2} along with existing bounds on the solution of the interior impedance problem; see \S\ref{sec:6.2} and \S\ref{sec:6.3} below.
We extend this methodology to prove the bounds \eqref{eq:Ainv_bound_main1} on $\opAinv$ also as an operator on $H^s(\Gamma)$, for $-1\leq s \leq 0$. These arguments also show that, when $|\eta|\sim k$,
  \beq\label{eq:Ainv_bound_main2}
\normAinv_{H_k^s(\Gamma)\to H_k^s(\Gamma)} \lesssim 1\quad \mbox{ and } \quad \normAinv_{H^s(\Gamma)\to H^s(\Gamma)} \lesssim k^{-s},
\eeq
for $-1\leq s\leq 0$ in the cases when $\Omega_-$ is \emph{either} Lipschitz and star-shaped with respect to a ball \emph{or} nontrapping.

The arguments from \cite{ChMo:08} and \cite{BaSpWu:16} are summarised in Lemma \ref{lem:recipe2} as a general ``recipe" where the input is a resolvent estimate for the exterior Dirichlet problem, and the output is a bound on $\opAinv$ and $\opABW^{-1}$.
We apply this recipe to the two other existing resolvent estimates for trapping obstacles \eqref{eq:Burq} and \eqref{eq:Ikawa2}, showing that, when $|\eta|\sim k$,
\beq \label{eq:mild}
\normAinv_{L^2(\Gamma)\to L^2(\Gamma)} \lesssim \log(2+k)
\eeq
for the mild (hyperbolic) trapping case of a finite number of smooth convex obstacles with strictly positive curvature (additionally satisfying the conditions in Definition \ref{def:IB}). Similarly
\beq \label{eq:worst}
\normAinv_{L^2(\Gamma)\to L^2(\Gamma)} \lesssim \exp(\alpha k),
\eeq
for some $\alpha>0$, for the general $C^\infty$ case, this ``worst case'' exponential growth achieved, as observed earlier in 2-d \cite{BeChGrLaLi:11}, when the geometry of $\Gamma$ is such that there exists a stable (elliptic) periodic orbit. The same bounds hold on $\normAinv_{H_k^s(\Gamma)\to H_k^s(\Gamma)}$ for $-1\leq s\leq 0$, and  they apply also to $\normAinv_{H^s(\Gamma)\to H^s(\Gamma)}$ with the bounds increased by an additional factor $k^{-s}$.
In particular, in the hyperbolic trapping case, we have that,
when $|\eta|\sim k$, given $k_0>0$,
\beq \label{eq:AinvknownGibbs}
\normAinv_{H^{-1/2}(\Gamma)\to H^{-1/2}(\Gamma)} \lesssim k^{1/2} \log(2+k), \quad \mbox{for } k\geq k_0;
\eeq
this is an improvement of the bound in \cite[Theorem 5.19]{Gi:17} by a factor $k^{1/2}$.

These new bounds on the norm of $\opAinv$ in the cases of elliptic and hyperbolic trapping are of interest in their own right, but also contrast strongly with the new bounds for $(R_0,R_1,a)$ parallel trapping obstacles in Corollary \ref{cor:CFIE}. The bound \eqref{eq:mild} is only worse than the nontrapping bound \eqref{eq:Ainvknown} by a log factor, while in the worst case of elliptic trapping the norm of $\opAinv$ can grow exponentially through some sequence of wavenumbers \cite[Theorem 2.8]{BeChGrLaLi:11}. In between, for $(R_0,R_1,a)$ parallel trapping obstacles, Corollary \ref{cor:CFIE} proves polynomial growth (through a particular sequence of wavenumbers) at a rate between $k$ and $k^2$.

\subsubsection{Bounds on the condition numbers of $\opA$ and $\opABW$}

Many authors  \cite{KrSp:83,Kr:85,Am:90,ChGrLaLi:09,BeChGrLaLi:11,BaSpWu:16} have studied, in addition to the norm of $\opA$, its $L^2$ {\em condition number}, defined by
\beq \label{eq:cond}
\cond(\opA) := \|\opA\|_{L^2(\Gamma)\to L^2(\Gamma)}\,\|\opAinv\|_{L^2(\Gamma)\to L^2(\Gamma)}.
\eeq
This quantity is of interest because the condition number at a continuous level is closely related to the condition numbers of the matrices that arise in Galerkin-method discretisations. Indeed, if orthogonal basis functions are used and $\Gamma$ is smooth enough, the condition number of the Galerkin matrix converges to \eqref{eq:cond} as the discretisation is refined \cite[\S3]{BeChGrLaLi:11}. Thus, understanding the dependence of $\cond(\opA)$ on $k$ and on the geometry provides quantitative information about condition numbers of matrices at a discrete level, which in turn is relevant to the stability of numerical methods and the convergence of iterative solvers (though see the discussion in \cite[Section 7.2]{BaSpWu:16}, \cite{GaMuSp:16} regarding related quantities that may be more informative still). In \S\ref{sec:cond} we study $\cond(\opA)$ for trapping geometries, by combining bounds on $\opAinv$ with known bounds on the norm of $\opA$, notably those in Chandler-Wilde et al.~\cite{ChGrLaLi:09} and those due to Galkowski and Smith \cite{GaSm:15,HaTa:15}, proving the first upper bounds on the condition number for trapping obstacles, see Corollary \ref{cor:condno}.

\subsubsection{$k$-explicit convergence of boundary element methods}\label{sec:BEM}

Along with bounding the condition number of $\opA$, our results have another important application in the numerical solution of scattering problems by boundary integral equation methods.
Recall that the {\em boundary element method (BEM)} is the standard term for the numerical solution of boundary integral equations by the Galerkin method when the finite-dimensional subspaces consist of piecewise polynomials. When convergence is achieved by \emph{both} increasing the degree $p$ of the polynomials \emph{and} decreasing the mesh diameter $h$ the method is called the $hp$-BEM; when only the mesh diameter $h$ is decreased the method is called the $h$-BEM.

\paragraph{$hp$-BEM.}

L\"ohndorf and Melenk \cite{LoMe:11} provided the first wavenumber-explicit error analysis for $hp$-boundary element methods applied to the integral equations \eqref{eq:CFIE} and \eqref{eq:BW} under the assumption that $\Gamma$ is analytic. Their convergence results however require that, for some $k_0>0$ and $\gamma\geq 0$,
\beq \label{eq:poly}
\|\opAinv\|_{L^2(\Gamma)\to L^2(\Gamma)} \lesssim k^\gamma, \quad \mbox{for }k\geq k_0,
\eeq
so that these convergence results have been proved to date only for nontrapping domains (see \cite{LoMe:11} \cite[\S1.4]{BaSpWu:16}). Corollary \ref{cor:CFIE} above shows that \eqref{eq:poly} holds for all $(R_0,R_1)$ obstacles, and the bound \eqref{eq:mild} shows that \eqref{eq:poly} holds also for an Ikawa-like union of convex obstacles (in the sense of Definition \ref{def:IB}). Putting these results together with \cite[Corollary 3.18]{LoMe:11} we have the following result. In this corollary we use the notation $\mathcal{S}^p(\mathcal{T})$ for the set of piecewise polynomials of degree $p$ on the triangulation $\mathcal{T}$ in the sense of \cite[Equation (3.17)]{LoMe:11}.

\begin{corollary}[Quasi-optimality of the $hp$-BEM] \label{cor:mel}
Suppose that $\Gamma$ is analytic, that $\mathcal{T}_h$ is a quasi-uniform triangulation with mesh size $h$ of $\Gamma$ in the sense of \cite[Definition 3.15]{LoMe:11}, that $\eta = ck$, for some non-zero real constant $c$, and that $\Omega_-$ is \emph{either} nontrapping, \emph{or} an Ikawa-like union of convex obstacles, \emph{or} an $(R_0,R_1)$ obstacle.

Let $\partial_n^+u$ be the solution of \eqref{eq:CFIE} and let $v_{hp}\in \mathcal{S}^p(\mathcal{T}_h)$ be its Galerkin-method approximation, defined by
\beq \label{eq:galerkin}
(\opA v_{hp},v)_\Gamma = (f_{k,\eta},v)_\Gamma, \quad \mbox{for all } v\in \mathcal{S}^p(\mathcal{T}_h),
\eeq
where $(\cdot,\cdot)_\Gamma$ denotes the inner product on $L^2(\Gamma)$.
Then, given $k_0>0$, there exist constants $C_1, C_2, C_3$ (independent of $h$, $p$, and $k$) such that, if $k\geq k_0$,
\beq \label{eq:melcon}
\frac{kh}{p} \leq C_1, \quad \mbox{and} \quad p \geq C_2 \log(2+k),
\eeq
then the quasi-optimal error estimate
\beq \label{eq:melenk}
\|v_{hp} - \partial_n^+u\|_{L^2(\Gamma)} \leq C_3 \inf_{v\in \mathcal{S}^p(\mathcal{T}_h)} \|v - \partial_n^+u\|_{L^2(\Gamma)}
\eeq
holds.
\end{corollary}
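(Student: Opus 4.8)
The plan is to reduce the statement directly to the wavenumber-explicit $hp$-BEM analysis of L\"ohndorf and Melenk \cite[Corollary 3.18]{LoMe:11}. For analytic $\Gamma$, the only hypothesis of that result not already verified in \cite{LoMe:11} itself is the polynomial-in-$k$ bound \eqref{eq:poly} on $\normAinv_{\LtGt}$; thus the proof consists of checking \eqref{eq:poly} in each of the three stated cases and then quoting \cite[Corollary 3.18]{LoMe:11}.

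First I would record the three relevant bounds, all of which apply because the hypothesis $\eta = ck$ ensures $|\eta|\sim k$. When $\Omega_-$ is nontrapping in the sense of Definition \ref{def:nt1}, \cite[Theorem 1.13]{BaSpWu:16} gives $\normAinv_{\LtGt}\lesssim 1$, i.e.\ \eqref{eq:poly} with $\gamma = 0$. When $\Omega_-$ is an Ikawa-like union of convex obstacles (Definition \ref{def:IB}), the bound \eqref{eq:mild} gives $\normAinv_{\LtGt}\lesssim \log(2+k)$, so \eqref{eq:poly} holds with any $\gamma>0$, e.g.\ $\gamma=1$, since $\log(2+k)\lesssim k$ for $k\geq k_0$. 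When $\Omega_-$ is an $(R_0,R_1)$ obstacle, Corollary \ref{cor:CFIE} (together with Remark \ref{rem:A} in the less regular cases) gives $\normAinv_{\LtGt}\lesssim k^2$, respectively $\lesssim k^{9/4}$ or $\lesssim k^{5/2}$, so \eqref{eq:poly} holds with $\gamma=5/2$ in all cases.

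The remaining step is bookkeeping: with \eqref{eq:poly} established and $\Gamma$ analytic, \cite[Corollary 3.18]{LoMe:11} supplies constants $C_1, C_2, C_3$ depending only on $\Gamma$, $c$, $\gamma$, and $k_0$ so that the threshold conditions \eqref{eq:melcon} force the quasi-optimality estimate \eqref{eq:melenk} for the Galerkin approximation \eqref{eq:galerkin}. I do not expect a genuine obstacle here, since the analytic heavy lifting is already contained in \eqref{eq:mild} and Corollary \ref{cor:CFIE}; the only points requiring care are to match the triangulation hypothesis (quasi-uniformity in the sense of \cite[Definition 3.15]{LoMe:11}) and the normalisation of $\eta$ between the two papers, and --- should \cite{LoMe:11} be phrased in terms of $\opABW$ rather than $\opA$ --- to pass between bounds on $\normAinv$ and $\normABWinv$ via the duality identities \eqref{eq:duals}--\eqref{eq:duals2}.
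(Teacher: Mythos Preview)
Your proposal is correct and matches the paper's own argument essentially verbatim: the paper states just before the corollary that ``Corollary \ref{cor:CFIE} above shows that \eqref{eq:poly} holds for all $(R_0,R_1)$ obstacles, and the bound \eqref{eq:mild} shows that \eqref{eq:poly} holds also for an Ikawa-like union of convex obstacles. Putting these results together with \cite[Corollary 3.18]{LoMe:11} we have the following result.'' The only (harmless) over-caution in your write-up is invoking Remark \ref{rem:A} for less regular cases --- since $\Gamma$ is assumed analytic, every component of $\Omega_-$ is $C^\infty$ and Corollary \ref{cor:CFIE} applies directly with $\gamma=2$.
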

An attractive feature of this result is that it demonstrates, via the bounds \eqref{eq:melcon}, that it is enough to maintain a ``fixed number of degrees of freedom per wavelength'', meaning increasing the dimension $N_{hp}$ of the approximating subspace  $\mathcal{S}^p(\mathcal{T}_h)$ in proportion to $k^{d-1}$, in order to maintain accuracy as $k$ increases, in agreement with much computational experience \cite{Ma:02} (and the numerical results in \cite{LoMe:11} show that this requirement is sharp). This corollary applies to all $(R_0,R_1)$ obstacles, including geometries that allow trapped periodic orbits, but does not apply to $(R_0,R_1,a)$ parallel trapping obstacles for which $\Gamma$ is not analytic.

\paragraph{$h$-BEM.}

It is commonly believed that, for nontrapping obstacles, the error estimate \eqref{eq:melenk} holds (with $C_3$ independent of $k$) for the $h$-BEM when $hk$ is sufficiently small, i.e., that a fixed number of degrees of freedom per wavelength is sufficient to maintain accuracy; this property can also be described by saying that the $h$-BEM does not suffer from the pollution effect \cite{BaSa:00}. However, the recent numerical experiments of Marburg \cite{Ma:16a}, \cite{Ma:17},  \cite{BaMa:17} give examples of nontrapping situations where pollution appears to occur, and therefore determining the sharp threshold on $h$ for the error estimate \eqref{eq:melenk} to hold in general is an exciting open question.

The best results so far in this direction are by Galkowski et al.~\cite{GaMuSp:16} (building on  results in \cite{GrLoMeSp:15}). Indeed \cite[Theorem 1.10]{GaMuSp:16} proves that \eqref{eq:melenk} holds (with $C_3$ independent of $k$) if: (i) $\Oi$ is smooth with strictly positive curvature\footnote{Here (and elsewhere in the paper), when $d=3$ we say that a piecewise-smooth $\Gamma$ has \emph{strictly positive curvature} if there exists $c>0$ such that, for almost every $\bx\in \Gamma$, the principal curvatures at $\bx$ are $\geq c$.
When $d=2$ we say that $\Gamma$ has strictly positive curvature if the above holds with the principal curvatures replaced by just the curvature.}
and $hk^{4/3}$ is sufficiently small; and (ii) $\Oi$ is nontrapping and $hk^{3/2}$ is sufficiently small (2-d) and $hk^{3/2}\log(2+k)$ is sufficiently small (3-d).

The arguments and results in \cite{GaMuSp:16, GrLoMeSp:15}, combined with the bounds on $\|\opAinv\|_{L^2(\Gamma)\to L^2(\Gamma)}$ that we obtain in this paper, enable us to prove in the next corollary the first $h$-BEM convergence results for trapping obstacles. The bounds in this corollary are, unsurprisingly, weaker  than the best results for nontrapping obstacles, but only by log factors for Ikawa-like unions of convex obstacles.

\begin{corollary}[Quasi-optimality of the $h$-BEM] \label{cor:hversion}
Suppose that $\Omega_-$ is $C^{2,\alpha}$ for some $\alpha\in (0,1)$, that $\eta = ck$, for some non-zero real constant $c$, that $k_0>0$, that $p\geq 0$, and that $\mathcal{T}_h$ is a shape-regular triangulation of $\Gamma$ in the sense of Definition \ref{def:triang}, with $h>0$ the maximum diameter of the elements $K\in \mathcal{T}_h$.
Let  $\partial_n^+u$ be the solution of \eqref{eq:CFIE}, and let $v_{hp}\in \mathcal{S}^p(\mathcal{T}_h)$ be the Galerkin-method approximation to $\partial_n^+ u$, defined by \eqref{eq:galerkin}.

(a) If $\Omega_-$ is an Ikawa-like union of convex obstacles then there exists $C>0$ such that, provided $k\geq k_0$ and $h k^{4/3}\log(2+k) \leq C$, it holds that
\beq \label{eq:melenk2}
\|v_{hp} - \partial_n^+u\|_{L^2(\Gamma)} \lesssim \log(2+k) \inf_{v\in \mathcal{S}^p(\mathcal{T}_h)} \|v - \partial_n^+u\|_{L^2(\Gamma)}.
\eeq

(b) If $\Omega_-$ is a piecewise smooth $(R_0,R_1)$ obstacle, then there exists $C>0$ such that, provided $k\geq k_0$ and $h k^{7/2}\log(2+k) \leq C$, it holds that
\beq \label{eq:melenk3}
\|v_{hp} - \partial_n^+u\|_{L^2(\Gamma)} \lesssim k^2 \inf_{v\in \mathcal{S}^p(\mathcal{T}_h)} \|v - \partial_n^+u\|_{L^2(\Gamma)}.
\eeq
The hidden constants in \eqref{eq:melenk2} and \eqref{eq:melenk3} are independent of $h$, $p$, and $k$.
\end{corollary}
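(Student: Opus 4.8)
The plan is to obtain this as a consequence of the abstract $h$-BEM quasi-optimality machinery of Galkowski, M\"uller, and Spence \cite{GaMuSp:16} (building on Graham, L\"ohndorf, Melenk, and Spence \cite{GrLoMeSp:15}), fed with the new bounds on $\normAinv_{L^2(\Gamma)\to L^2(\Gamma)}$ proved earlier in the paper. The first step is to extract from \cite{GaMuSp:16} the operator-theoretic statement underlying their (nontrapping) Theorem 1.10: if $\Gamma\in C^{2,\alpha}$, $\eta\sim k$, and $\mathcal{T}_h$ is shape-regular, then the Galerkin solution $v_{hp}$ of \eqref{eq:galerkin} exists, is unique, and satisfies the quasi-optimal estimate with constant $C_{\mathrm{qo}}\lesssim \normAinv_{L^2(\Gamma)\to L^2(\Gamma)}$ up to logarithmic factors, provided a smallness condition of the form $h\,k^{\mu}\,\normAinv_{L^2(\Gamma)\to L^2(\Gamma)}\lesssim 1$ holds, with $\mu$ depending only on the space dimension and on the regularity (and, where available, the curvature) of $\Gamma$. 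The key point is that the proof of this statement uses only the invertibility of $\opA$, the operator-norm bounds on $S_k$, $D_k$, $D_k'$ from \cite{ChGrLaLi:09,GaSm:15,HaTa:15}, and $k$-explicit approximation estimates for $\mathcal{S}^p(\mathcal{T}_h)$ applied to solutions of the adjoint equation; none of these ingredients requires $\Omega_-$ to be nontrapping, and that hypothesis enters \cite[Theorem 1.10]{GaMuSp:16} purely through the bound $\normAinv_{L^2(\Gamma)\to L^2(\Gamma)}\lesssim 1$. Verifying that this is indeed the only route by which non-trapping is invoked, so that the argument of \cite{GaMuSp:16} applies verbatim once $\normAinv$ is allowed to grow, is the main (and essentially the only nontrivial) step.

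With the criterion in hand, both parts follow by substitution. For (a), $\Omega_-$ is a disjoint union of smooth strictly convex obstacles, so $\Gamma$ is smooth with strictly positive curvature, and the curved-obstacle exponent $\mu = 4/3$ of \cite[Theorem 1.10(i)]{GaMuSp:16} applies (this uses the bounds of Galkowski--Smith and Hassell--Tacy \cite{GaSm:15,HaTa:15} on $S_k$, $D_k$, $D_k'$); inserting $\normAinv_{L^2(\Gamma)\to L^2(\Gamma)}\lesssim \log(2+k)$ from \eqref{eq:mild} (valid for Ikawa-like unions of convex obstacles with $\eta\sim k$) into the smallness condition gives the threshold $h\,k^{4/3}\log(2+k)\lesssim 1$ and the quasi-optimality constant $\lesssim\log(2+k)$, i.e.\ \eqref{eq:melenk2}. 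For (b), $\Gamma\in C^{2,\alpha}$ is piecewise smooth but not assumed curved, so the generic exponent of \cite[Theorem 1.10(ii)]{GaMuSp:16} (with the $S_k$, $D_k$, $D_k'$ bounds of \cite{ChGrLaLi:09}) is used; inserting $\normAinv_{L^2(\Gamma)\to L^2(\Gamma)}\lesssim k^2$ for piecewise-smooth $(R_0,R_1)$ obstacles from Corollary \ref{cor:CFIE} (see also Remark \ref{rem:A}) yields the threshold $h\,k^{7/2}\log(2+k)\lesssim 1$ and the quasi-optimality constant $\lesssim k^2$, i.e.\ \eqref{eq:melenk3}.

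It then remains only to observe that every constant is independent of $h$, $p$, and $k$: the abstract criterion, the layer-potential norm bounds, the approximation estimates, and the $\normAinv$ bounds of Corollary \ref{cor:CFIE} and \eqref{eq:mild} all share this property. The final piece of bookkeeping is to collect the logarithmic factors that arise --- from odd space dimension in the approximation estimates, from the $C^{2,\alpha}$ layer-potential bounds, and (in part (a)) from $\normAinv$ itself --- and to check that, under the stated smallness conditions, these can all be absorbed into the single factor $\log(2+k)$ appearing in \eqref{eq:melenk2}--\eqref{eq:melenk3} and into the hidden constants there. I do not expect any genuine analytical difficulty beyond the extraction of the abstract criterion described above.
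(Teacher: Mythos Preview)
Your proposal is correct and takes essentially the same approach as the paper: the paper makes the abstract criterion explicit as the product inequality $\|I-P_{hp}\|_{H^1(\Gamma)\to L^2(\Gamma)}\,\|D_k'-\ri\eta S_k\|_{L^2(\Gamma)\to H^1(\Gamma)}\,\|\opAinv\|_{L^2(\Gamma)\to L^2(\Gamma)}\le\delta/(1+\delta)$ (citing \cite[Lemma~4.1]{GrLoMeSp:15} and \cite[Lemma~3.3]{GaMuSp:16}), bounds the first factor by $Ch$, and then substitutes exactly the $\|\opAinv\|$ bounds you identify together with the $L^2\to H^1$ smoothing estimates $\|D_k'-\ri\eta S_k\|_{L^2\to H^1}\lesssim k^{4/3}\log(2+k)$ in the strictly-curved case~(a) and $\lesssim k^{3/2}\log(2+k)$ in the piecewise-smooth case~(b). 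The only correction your sketch needs is that these smoothing estimates are the $L^2\to H^1$ bounds of \cite{GaSp:18}, not the $L^2\to L^2$ bounds of \cite{ChGrLaLi:09,GaSm:15,HaTa:15} you cite.
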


\subsection{Outline of paper}

In \S\ref{sec:prelim} we establish notations and definitions and collect a few basic results that are used throughout the paper. In \S\ref{sec:resol} we prove Theorem \ref{thm:resol} (the resolvent estimates for $(R_0,R_1)$ obstacles). In \S\ref{sec:dtn} we prove Theorem \ref{thm:dtn} (bounds on the DtN map for $(R_0,R_1)$ obstacles), and deduce DtN bounds also for hyperbolic and elliptic trapping. In \S\ref{sec:infsup} we deduce bounds on the inf-sup constant for trapping confugurations, proving Corollary \ref{cor:infsup}. We consider applications to boundary integral equations in \S\ref{sec:ie}, proving Corollaries \ref{cor:CFIE} and \ref{cor:hversion}, and discussing the other issues summarised in \S\ref{sec:BIE}. We also, as an extension of the proof of the lower bound \eqref{eq:Ainv_bound_mainlower}, provide in \S\ref{rem:coercivity} a counterexample to the conjecture of Betcke and Spence \cite[Conjecture 6.2]{BeSp:11} that $\opA$ is coercive uniformly in $k$ for large $k$ whenever $\Oi$ is nontrapping. Table \ref{tab:Ainv} provides a useful summary of the results of this paper, and of the existing known sharpest bounds.

\section{Preliminaries} \label{sec:prelim}

\subsection{Morawetz/Rellich-type identities and associated results}

\ble[Morawetz-type identity]\label{le:morid1}
Let $v\in C^2(D)$ for some open set $D\subset \Rea^d$, $d\geq 2$.
Let $\cL v := (\Delta  + k^2) v$ with $k \in \mathbb{R}$.
Let $\bZ\in (C^1(D))^d$, $\beta \in C^1(D)$, and $\alpha\in C^2(D)$ (i.e. $\bZ$ is a vector and $\beta$ and $\alpha$ are scalars) and let all three be \emph{real}-valued.
Let
\beq\label{eq:multa}
\cZ v:= Z\cdot \gv - \ri k \beta v + \alpha v.
\eeq
Then, with the usual summation convention,
\begin{align}\nonumber 2 \Re \big(\overline{\cZ v } \,\cL v \big) = &\, \nabla \cdot \Big[ 2 \Re\big(\overline{\cZ v}\, \nabla v \big) + \big(k^2 \nvs - \ngvs\big) \bZ - \nabla\alpha \nvs\Big] + \big(2 \alpha-\nabla \cdot \bZ  \big)\big( k^2 \nvs-\ngvs\big) \\ &
-2 \Re \big(\partial_i Z_j \partial_i v \overline{\partial_j v}\big)
- 2 \Re\big( \ri k\, \overline{v}\,\nabla \beta \cdot \nabla v \big) + \Delta\alpha \nvs.
\label{eq:morid1}
\end{align}
\ele

Lemma \ref{le:morid1} can be proved by expanding the divergence on the right-hand side; see \cite[Proof of Lemma 2.1]{SpKaSm:15}. The identity \eqref{eq:morid1} was essentially introduced by Morawetz in \cite[\S I.2]{Mo:75}; see the bibliographic remarks in \cite[Remark 2.7]{SpKaSm:15}. Identities arising from the multiplier $\bZ\cdot\gu$ are often called Rellich-type, due to Rellich's use of the multiplier $x\cdot \gv$ in \cite{Re:40} and the multiplier $e_d \cdot \gu$ in \cite{Re:43} (see, e.g., the discussion in \cite[\S  5.3]{ChGrLaSp:12} and \cite[\S I.4]{MoSp:14}).

We now prove an integrated form of the identity \eqref{eq:morid1}; when we use this in the proof of Theorem \ref{thm:resol}, it turns out that we only need to consider constant $\beta$, and so we restrict attention to this case.

\begin{lemma}[Integrated form of the identity \eqref{eq:morid1}]\label{lem:morint}
Let $D \subset \Rea^d$ be a bounded Lipschitz domain with outward-pointing unit normal $\nu$, let $\gamma$ denote the trace operator, and $\partial_\nu$ the normal derivative operator.
If $\bZ\in (C^1(\overline{D}))^d$ and $\alpha\in C^2(\overline{D})$ are real-valued, $\beta\in\Rea$, and $v\in V(D)$, where
\beq\label{eq:V}
V(D):=\Big\{
v\in H^1(D):  \Delta v \in L^2(D), \gamma v \in H^1(\partial D), \partial_\nu v \in L^2(\partial D)
\Big\},
\eeq
and if $\cL v := (\Delta  + k^2) v$ with $k \in \mathbb{R}$, and $\cZ v$ is defined by \eqref{eq:multa}, then
\begin{align}\nonumber
&\int_D\left( 2 \Re \big(\overline{\cZ v } \,\cL v \big)
+2 \Re \big(\partial_i Z_j \partial_i v \overline{\partial_j v}\big)
- \big(2 \alpha-\nabla \cdot \bZ  \big)\big( k^2 \nvs-\ngvs\big)-\Delta \alpha \nvs\right)\rd x\\ \nonumber
&= \int_{\partial D}\left[ (Z\cdot \nu) \left( \left|\partial_\nu v\right|^2 - |\nT (\gamma v)|^2 + k^2 |\gamma v|^2 \right)\right.\\
&  \hspace{0.5cm} \left. + 2 \Re \left(
\big(Z\cdot \overline{\nT (\gamma v)} + \ri k \beta \overline{\gamma v} + \alpha\, \overline{\gamma v}\big)
\partial_\nu v
\right)-\pdiff{\alpha}{\nu}|\gamma v|^2\right]\rd s.
\label{eq:morid_int}
\end{align}
\end{lemma}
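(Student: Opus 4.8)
\emph{Proof strategy.} The plan is to obtain \eqref{eq:morid_int} by integrating the pointwise identity \eqref{eq:morid1} of Lemma \ref{le:morid1} over $D$ and converting the divergence term into a boundary integral; the substance then lies in (a) a density argument allowing this for the low-regularity space $V(D)$ defined in \eqref{eq:V} rather than just $C^2$ functions, and (b) an algebraic rearrangement of the boundary integrand. I would start with $v\in C^2(\overline D)$, so that Lemma \ref{le:morid1} applies and every term extends continuously to $\overline D$. Since $\beta$ is a real constant, $\nabla\beta\equiv 0$ and \eqref{eq:morid1} reads
\beqs
2\Re\big(\overline{\cZ v}\,\cL v\big) = \nabla\cdot F + \big(2\alpha-\nabla\cdot Z\big)\big(k^2\nvs-\ngvs\big) - 2\Re\big(\partial_i Z_j\,\partial_i v\,\overline{\partial_j v}\big) + \Delta\alpha\,\nvs,
\eeqs
with $F:= 2\Re(\overline{\cZ v}\,\nabla v) + (k^2\nvs-\ngvs)\,Z - \nabla\alpha\,\nvs$. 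For $v\in C^2(\overline D)$ we have $\cZ v\in C^1(\overline D)$, hence $F\in (C^1(\overline D))^d$, so the divergence theorem on the bounded Lipschitz domain $D$ gives $\int_D\nabla\cdot F\,\rd x=\int_{\partial D}F\cdot\nu\,\rd s$; moving the three ``bulk'' terms to the left then gives the left-hand side of \eqref{eq:morid_int} equated to $\int_{\partial D}F\cdot\nu\,\rd s$.

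It then remains to rewrite $F\cdot\nu$. On $\partial D$ I would split the trace of $\nabla v$ into normal and tangential parts, $\nabla v=(\partial_\nu v)\,\nu+\nT(\gamma v)$, so that $\ngvs=|\partial_\nu v|^2+|\nT(\gamma v)|^2$ there, and use that $Z$, $\beta$, $\alpha$ are real to write $\overline{\cZ v}=(Z\cdot\nu)\,\overline{\partial_\nu v}+Z\cdot\overline{\nT(\gamma v)}+\ri k\beta\,\overline{\gamma v}+\alpha\,\overline{\gamma v}$ on $\partial D$. Substituting into $F\cdot\nu=2\Re(\overline{\cZ v}\,\partial_\nu v)+(k^2|\gamma v|^2-\ngvs)(Z\cdot\nu)-\pdiff{\alpha}{\nu}|\gamma v|^2$, the term $2(Z\cdot\nu)|\partial_\nu v|^2$ produced by the first summand combines with $-(Z\cdot\nu)|\partial_\nu v|^2$ from $-\ngvs(Z\cdot\nu)$ to leave $(Z\cdot\nu)\big(|\partial_\nu v|^2-|\nT(\gamma v)|^2+k^2|\gamma v|^2\big)$, and the remaining terms collect into $2\Re\big((Z\cdot\overline{\nT(\gamma v)}+\ri k\beta\,\overline{\gamma v}+\alpha\,\overline{\gamma v})\,\partial_\nu v\big)-\pdiff{\alpha}{\nu}|\gamma v|^2$. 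This is precisely the integrand of \eqref{eq:morid_int}, establishing the lemma for $v\in C^2(\overline D)$.

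Finally I would pass to general $v\in V(D)$ by density, approximating $v$ by a sequence $v_n\in C^2(\overline D)$ (e.g.\ through $H^2(D)$) with $v_n\to v$ in $H^1(D)$, $\Delta v_n\to\Delta v$ in $L^2(D)$, $\gamma v_n\to\gamma v$ in $H^1(\partial D)$, and $\partial_\nu v_n\to\partial_\nu v$ in $L^2(\partial D)$; since $Z\in C^1(\overline D)$ and $\alpha\in C^2(\overline D)$, each term in \eqref{eq:morid_int} depends continuously on $(v,\nabla v,\Delta v,\gamma v,\nT(\gamma v),\partial_\nu v)$ in these topologies, so the identity passes to the limit. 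I expect this last step to be the main obstacle: it requires density of smooth functions in $V(D)$ for the graph norm $\|v\|_{H^1(D)}+\|\Delta v\|_{L^2(D)}+\|\gamma v\|_{H^1(\partial D)}+\|\partial_\nu v\|_{L^2(\partial D)}$ on a general bounded Lipschitz domain, which I would deduce from, or by adapting, the density results in \cite{CoDa:98}; I note that in all applications of this lemma in \S\ref{sec:resol} the function $v$ is a Helmholtz solution lying in $H^2(D)$ by interior elliptic regularity, for which the approximation is classical and this subtlety is absent.
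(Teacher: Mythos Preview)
Your proof is correct and follows essentially the same approach as the paper: establish the identity for smooth $v$ via the divergence theorem and the tangential/normal decomposition of $\nabla v$ on $\partial D$, then extend to $V(D)$ by the density result of \cite{CoDa:98} and continuity of each term in the $V(D)$-topology. Your closing remark, however, that in the applications of \S\ref{sec:resol} one has $v\in H^2(D)$ by interior elliptic regularity is not accurate---$D=\Omega_R$ has the Lipschitz boundary $\Gamma$, and only Ne\v{c}as-type boundary regularity ($\gamma v\in H^1(\partial D)$, $\partial_\nu v\in L^2(\partial D)$) is available there, so the density step is genuinely needed.
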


\

\begin{proof}[Proof of Lemma \ref{lem:morint}]
We first assume that $Z$, $\alpha$, and $\beta$ are as in the statement of the lemma, but
$v\in \cD(\overline{D}):=\{ U|_D : U \in C^\infty(\Rea^d)\}$.
Recall that the divergence theorem $\int_D \nabla \cdot F= \int_{\partial D} \gamma F \cdot \nu$
 is valid when $F\in H^1(D)$ by \cite[Theorems 3.29, 3.34, and 3.38]{Mc:00}.
Recall also that the product of an $H^1(D)$ function and a $C^1(\overline{D})$ function is in $H^1(D)$, and the usual product rule for differentiation holds. Thus
$F= 2 \Re\big(\overline{\cZ v}\, \nabla v \big) + \big(k^2 \nvs - \ngvs\big) \bZ- \nabla \alpha \nvs$
is in $H^1(D)$
and $\nabla\cdot F$ is given by the integrand on the left-hand side of \eqref{eq:morid_int}.
Furthermore,
\begin{align*}
\gamma F\cdot \nu =(Z\cdot \nu) \left( \left|\pdiff{v}{\nu}\right|^2 + k^2 \nvs - |\nT v|^2 \right) + 2 \Re \left(
\big(Z\cdot \overline{\nT v} + \ri k \beta \overline{v} + \alpha \overline{v}\big)
\pdiff{v}{\nu}
\right)- \pdiff{\alpha}{\nu}|\gamma v|^2
\end{align*}
on $\partial D$, where we have used the fact that $\gv = \nu (\partial v/\partial \nu) + \nT v$ on $\partial D$ for $v\in \cD(\overline{D})$; the identity \eqref{eq:morid_int} then follows from the divergence theorem.

The result for $v\in V(D)$ then follows from (i) the density of $\cD(\overline{D})$ in $V(D)$ \cite[Lemmas 2 and 3]{CoDa:98}
and (ii) the fact that \eqref{eq:morid_int} is continuous in $v$ with respect to the topology of $V(D)$.
\epf

\ble\textbf{\emph{(Morawetz-Ludwig identity, \cite[Equation 1.2]{MoLu:68})}}\label{le:ML}
 Let $v \in C^2(D)$ for some open $D\subset \Rea^d$, $d\geq 2$. Let $\cL v:=(\Delta +k^2)v$ and let
 \beq\label{Malpha}
\cM_\alpha v :=	 r\left(\pdiff{v}{r}-\ri k v + \frac{\alpha}{r}v\right),
\eeq
where $\alpha \in \Rea$ and $\partial v/\partial r=\bx\cdot \gv/r$. Then
\bal\nonumber
2\Re( \overline{\cM_\alpha v} \cL v) =  &\,\nabla \cdot \bigg[2\Re \left(\overline{\cM_{\alpha} v} \gv\right)+ \left(k^2\nvs - \ngvs \right)\bx\bigg] \\&+ \big(2\alpha -(d-1)\big)\big(k^2 \nvs - \ngvs\big) - \big(\ngvs -|\partial v/\partial r|^2\big)- \big|\partial v/\partial r-\ri k v\big|^2.
\label{eq:ml2d}
\end{align}
\ele

The Morawetz-Ludwig identity is a particular example of the identity \eqref{eq:morid1} with $\bZ=\bx$, $\beta=r,$ and $\alpha$ a constant, and some further manipulation of the non-divergence terms (using the fact that $\bx= \beta\nabla\beta$). For a proof, see \cite{MoLu:68}, \cite[Proof of Lemma 2.2]{SpChGrSm:11}, or \cite[Proof of Lemma 2.3]{SpKaSm:15}.

The Morawetz-Ludwig identity \eqref{eq:ml2d} has two key properties. With this identity rearranged and written as $\nabla\cdot\bQ(v)=P(v)$, the key properties are:
\ben
\item
If $u$ is a solution of $\cL u=0$ in $\Rea^d\setminus \overline{B_{R_0}}$, for some $R_0>0$, satisfying the Sommerfeld radiation condition \eqref{eq:src}, then, where $\GR:= \partial B_R$,
\beq\label{eq:N0}
\int_\GR \bQ(u) \cdot\widehat{\bx}\, \rd s
 \tendo \quad \tas R\tendi
\eeq
(independent of the value of $\alpha$ in the multiplier $\cM_\alpha u$); see \cite[Proof of Lemma 5]{MoLu:68}, \cite[Lemma 2.4]{SpChGrSm:11}.
\item
If $\cL u=0$ and $2\alpha= (d-1)$,  then
\beq
P(u) \geq 0.\label{eq:2ndkey}
\eeq
\een
The two properties of the Morawetz-Ludwig identity above mean that if the multiplier that we use on the operator $\cL$ is equal to $\cM_{(d-1)/2}$ outside a large ball, then there is no contribution from infinity. A convenient way to encode this information is the following lemma due to Chandler-Wilde and Monk \cite[Lemma 2.1]{ChMo:08}.

\ble[Inequality on $\GammaR$ used to deal with the contribution from infinity]\label{lem:2.1}
Let $u$ be a solution of the homogeneous Helmholtz equation in $\Rea^d\setminus \overline{B_{R_0}}$, $d=2,3$, for some $R_0>0$, satisfying the Sommerfeld radiation condition \eqref{eq:src}.
Let $\alpha\in \Rea$ with
$2\alpha\geq d-1$. Then, for $R>R_0$,
\beq\label{eq:2.1}
R\int_{\GammaR} \left( \left|\pdiff{u}{r}\right|^2 - |\nabla_S u|^2 + k^2 |u|^2\right) \rd s  - 2 k R\, \Im \int_{\GammaR} \overline{u} \pdiff{u}{r}\,\rd s + 2\alpha\Re \int_{\GammaR}\overline{u}\pdiff{u}{r} \, \rd s\leq 0,
\eeq
where $\nabla_S$ is the surface gradient on $\GR=\partial B_R$.
\ele

We have purposely denoted the constant in \eqref{eq:2.1} by $\alpha$ to emphasise the fact that the left-hand side of \eqref{eq:2.1} is
$\int_\GR \bQ(u) \cdot\widehat{\bx}\,\rd s$ with $\bQ(u)$ arising from
the multiplier $\cM_\alpha u = \bx\cdot \gu - \ri kr u +\alpha u$. We will see below that the Morawetz-Ludwig identity proves the inequality \eqref{eq:2.1} when $2\alpha=d-1$, but it will be slightly more convenient to have this result for $2\alpha\geq d-1$. For the proof of this we need the following, slightly simpler, inequality on $\GR$.

\ble\label{lem:2.2}
Let $u$ be a solution of the homogeneous Helmholtz equation in $\Rea^d\setminus \overline{B_{R_0}}$, $d=2,3$, for some $R_0>0$, satisfying the Sommerfeld radiation condition \eqref{eq:src}. Then, for $R>R_0$,
\beq\label{eq:2.12}
\Re \int_{\GammaR} \bar{u}\pdiff{u}{r} \, \rd s \leq 0.
\eeq
\ele

\bpf[Proof of Lemma \ref{lem:2.2}]
This result is proved in \cite[Theorem 2.6.4, p.97]{Ne:01} or \cite[Lemma 2.1]{ChMo:08}
using the explicit expression for the solution of the Helmholtz equation in the exterior of a ball (i.e. an expansion in either trigonometric polynomials, for $d=2$, or spherical harmonics, for $d=3$, with coefficients given in terms of Bessel and Hankel functions) and then proving bounds on particular combinations of Bessel and Hankel functions.
\epf

\

\bpf[Proof of Lemma \ref{lem:2.1}]
This result is proved in \cite[Lemma 2.1]{ChMo:08} by using the explicit expression for the solution of the Helmholtz equation in the exterior of a ball, as in the proof of Lemma   \ref{lem:2.2}, and proving monotonicity properties of combinations of Bessel and Hankel functions.
We provide here an alternative, shorter, proof via the Morawetz-Ludwig identity
but note that in fact, \cite[Lemma 2.1]{ChMo:08} is slightly stronger result than Lemma \ref{lem:2.1} when $d=3$, showing that \eqref{eq:2.1} holds whenever $2\alpha\geq 1$.

By the inequality \eqref{eq:2.12}, it is sufficient to prove \eqref{eq:2.1} with $2\alpha=d-1$.
We now integrate \eqref{eq:ml2d} with $v=u$ and $2\alpha =d-1$ over $B_{R_1}\setminus B_R$, use the divergence theorem, and then let $R_1\tendi$
(note that using the divergence theorem is allowed since $u$ is $C^\infty$ by elliptic regularity).
The first key property of the Morawetz-Ludwig identity stated above (as \eqref{eq:N0}) implies that the surface integral on $\vert \bx \vert ={R_1}$ tends to zero as $R_1\tendi$ \cite[Lemma 2.4]{SpChGrSm:11}. Then, using the decomposition $\gv = \nabla_S v + \widehat{\bx}\partial_r v$ on the integral over $\Gamma_R$, we obtain that
\begin{align*}
\int_\GR \bQ(u) \cdot\widehat{\bx} \, \rd s&=
\int_{\GammaR} R\left( \left|\pdiff{u}{r}\right|^2  - |\nabla_S u|^2+ k^2 |u|^2\right) \, \rd s  \\
&\hspace{3cm}- 2 k R\, \Im \int_{\GammaR} \bar{u} \pdiff{u}{r} \, \rd s+ (d-1)\Re \int_{\GammaR}\bar{u}\pdiff{u}{r} \, \rd s \nonumber \\
& =  -\int_{\Rea^d\setminus B_R}\left(\big(\ngus -| \partial u/\partial r|^2\big)+ \left|
 \partial u/\partial r - \ri k u
 \right|^2\right)\rd x\leq 0\label{eq:2.1b}
 \end{align*}
(where this last inequality is the second key property \eqref{eq:2ndkey} above); i.e.\ we have established \eqref{eq:2.1} with $2\alpha=d-1$ and we are done.
 \epf

\

The inequality \eqref{eq:2.12} combined with Green's identity (i.e.~pairing $\cL v$ with $v$) has the following simple consequence, which we use later.

\ble\label{lem:Green}
Let $f\in L^2(\Omega_+)$ have compact support in $\overline{\Omega_+}$, and let $u\in H^1_{\mathrm{loc}}(\Omega_+)$ be a solution to the Helmholtz equation $\Delta u + k^2 u = -f$ in $\Omega_+$ that satisfies the Sommerfeld radiation condition \eqref{eq:src} and the boundary condition $\gamma_+ u = 0$. For any $R>R_\Gamma$ such that $\supp f\subset B_R$,
\beq\label{eq:Green_ineq}
\int_{\Omega_R}\ngus \, \rd x\leq  k^2 \int_{\Omega_R}\nus \, \rd x+ \Re\int_{\Omega_R} f \bar{u}\, .
\eeq
\ele

\bpf
By multiplying $\cL u =-f$ by $\bar{u}$, integrating over $\Omega_R$, and applying the divergence theorem, we have
\beqs
\int_{\Omega_R}\ngus \, \rd x- k^2 \int_{\Omega_R}\nus \, \rd x-\int_{\Omega_R} f \bar{u}\, \rd x = \int_{\GR}\bar{u}\pdiff{u}{r} \, \rd s.
\eeqs
The result then follows by taking the real part and using \eqref{eq:2.12}.
\epf

\subsection{A Poincar\'e-Friedrichs-type inequality} \label{sec:poincare}

The following Poincar\'e-Friedrichs-type inequality will play a key role in the proof of Theorem \ref{thm:resol} (see Lemma \ref{lem:E3new} below).

\begin{lemma} \label{lem:Fried}
For $R>0$ and $v\in H^1(\R^d)$ it holds that
\beq\label{eq:Fried}
\int_{B_{2R}}|v|^2 \, \rd x \leq  8\int_{B_{\sqrt{13} R}\setminus B_{2R}} |v|^2\, \rd x   + 4 R^2 \int_{B_{\sqrt{13} R}} |\partial_d v|^2 \,\rd x.
\eeq
\end{lemma}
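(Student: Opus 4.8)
The plan is to reduce the inequality to a one‑dimensional Poincaré–Friedrichs estimate along vertical lines and then integrate over the transverse variable. Write $x=(x',x_d)$ with $x'\in\R^{d-1}$, so $|x|^2=|x'|^2+x_d^2$, and for $|x'|<2R$ set $\alpha(x'):=\sqrt{4R^2-|x'|^2}\in(0,2R]$, the half‑length of the chord of $B_{2R}$ lying over $x'$. By Fubini, for a.e.\ $x'$ the slice $v(x',\cdot)$ lies in $H^1(\R)$ (with distributional derivative $\partial_d v(x',\cdot)\in L^2(\R)$), so it suffices to prove a scalar claim: for $w\in H^1(\R)$ and $0<\alpha\le 2R$,
\[
\int_{-\alpha}^{\alpha}|w|^2\ \le\ 8\int_{\alpha\le|s|\le 3R}|w(s)|^2\,\rd s\ +\ 4R^2\int_{-3R}^{3R}|w'(\tau)|^2\,\rd\tau .
\]
The displacement $3R$ is the natural ``reach'': a point of that chord, moved by $3R$ in the $x_d$‑direction, can be pushed out of $B_{2R}$, and the Pythagorean identity $(2R)^2+(3R)^2=13R^2$ is exactly why the cylinder $\{|x'|<2R,\ |x_d|<3R\}$ — and hence all the sets appearing below — lies inside $B_{\sqrt{13}R}$.

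To prove the scalar claim, first take $0\le t<\alpha\le s\le 3R$, write $w(t)=w(s)-\int_t^s w'$, and estimate $|w(t)|^2\le 4|w(s)|^2+\tfrac43(s-t)\int_t^s|w'|^2$, using the weighted Young inequality $|a-b|^2\le(1+\delta)|a|^2+(1+\delta^{-1})|b|^2$ with the specific choice $\delta=3$ together with Cauchy–Schwarz. Then enlarge $\int_t^s|w'|^2$ to $\int_0^{3R}|w'|^2$ and average in $s$ over $[\alpha,3R]$, which produces the factor $\tfrac1{3R-\alpha}\int_\alpha^{3R}(s-t)\,\rd s=\tfrac{\alpha+3R}{2}-t$; integrating in $t$ over $[0,\alpha)$ and using $\int_0^{\alpha}(\tfrac{\alpha+3R}{2}-t)\,\rd t=\tfrac{3R\alpha}{2}$ gives
\[
\int_0^{\alpha}|w|^2\ \le\ \frac{4\alpha}{3R-\alpha}\int_{\alpha}^{3R}|w(s)|^2\,\rd s\ +\ 2R\alpha\int_0^{3R}|w'(\tau)|^2\,\rd\tau .
\]
Doing the same with $t\mapsto w(-t)$ and adding, one uses that for $\alpha\in(0,2R]$ the constant $\tfrac{4\alpha}{3R-\alpha}$ is maximal at $\alpha=2R$, where it equals $8$, and $2R\alpha\le 4R^2$. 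This is where the numbers $8$ and $4R^2$ are forced, and $\delta=3$ together with the reach $3R$ is precisely the pair that makes both constraints tight at $\alpha=2R$.

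Finally, apply the scalar claim slicewise with $\alpha=\alpha(x')$ and integrate over $\{x'\in\R^{d-1}:|x'|<2R\}$. The left‑hand side becomes $\int_{B_{2R}}|v|^2$ since $\{(x',t):|x'|<2R,\ |t|<\alpha(x')\}$ is exactly $B_{2R}$ (and for $|x'|\ge2R$ the corresponding slice of $B_{2R}$ is empty). On the right, the $|v|^2$‑term lives on $\{(x',s):|x'|<2R,\ \alpha(x')\le|s|\le 3R\}$, where $|x|^2\ge|x'|^2+\alpha(x')^2=4R^2$ and $|x|^2<4R^2+9R^2=13R^2$, so up to the null sets $\partial B_{2R}$ and $\partial B_{\sqrt{13}R}$ it is dominated by $\int_{B_{\sqrt{13}R}\setminus B_{2R}}|v|^2$; the $|\partial_d v|^2$‑term lives on $\{|x'|<2R,\ |x_d|<3R\}\subset B_{\sqrt{13}R}$, hence is dominated by $\int_{B_{\sqrt{13}R}}|\partial_d v|^2$. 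This yields \eqref{eq:Fried}. The only genuinely delicate point is the bookkeeping in the scalar step — balancing how far one must travel to reach the ``boundary layer'' against the size of that layer — which is what pins down the parameter $\delta=3$ and the reach $3R$; the rest is Fubini and elementary geometry.
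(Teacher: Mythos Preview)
Your proof is correct and follows essentially the same approach as the paper: both derive a one-dimensional Poincar\'e--Friedrichs inequality via the fundamental theorem of calculus, Cauchy--Schwarz, and Young's inequality with parameter $\delta=\epsilon=3$, then integrate over the transverse variable and use the Pythagorean identity $(2R)^2+(3R)^2=13R^2$ to embed the resulting cylinders in $B_{\sqrt{13}R}$. The only cosmetic difference is that the paper first embeds $B_{2R}$ in the cylinder $\{|x'|<2R,\ |x_d|<2R\}$ and works with a fixed slab half-height $h=2R$, whereas you slice the ball directly with the variable chord half-length $\alpha(x')$ and then observe that the worst case $\alpha=2R$ governs the constants; both routes give identical constants.
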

\begin{proof}
Suppose that $\phi\in C_0^\infty(\R)$ and $h, H> 0$. Then, for $0\leq t\leq h\leq s\leq h+H$,
$$
\phi(t) = \phi(s)-\int_t^s \phi^\prime(r)\, \rd r
$$
so that, by the Cauchy-Schwarz inequality and the inequality
\beq \label{eq:Cauchy}
2ab \leq \epsilon a^2 + b^2/\epsilon\quad\tfa \, a,b,\epsilon>0,
\eeq
we have
$$
|\phi(t)|^2 \leq (1+\epsilon)|\phi(s)|^2 + (1+\epsilon^{-1})(s-t)\int_t^s|\phi^\prime(r)|^2 \, \rd r.
$$
Hence, for $0\leq h\leq s\leq h+H$,
\begin{eqnarray*}
\int_0^h |\phi(t)|^2 \, \rd t &\leq &(1+\epsilon)h|\phi(s)|^2 + (1+\epsilon^{-1})\int_0^h \left\{(s-t)\int_0^{h+H}|\phi^\prime(r)|^2 \, \rd r \right\}\rd t\\
& = & (1+\epsilon)h|\phi(s)|^2 + \frac{1}{2}(1+\epsilon^{-1})h(2s-h)\int_0^{h+H} |\phi^\prime(r)|^2\,\rd r,
\end{eqnarray*}
so that, integrating with respect to $s$ from $h$ to $h+H$ and dividing by $H$,
\begin{eqnarray} \label{eq:1DFried}
\int_0^h |\phi(t)|^2\, \rd t & \leq & \frac{(1+\epsilon)h}{H}\int_h^{h+H}|\phi(s)|^2\, \rd s + \frac{(1+\epsilon^{-1})h(h+H)}{2}\int_0^{h+H} |\phi^\prime(r)|^2 \,\rd r.
\end{eqnarray}

For $h_1<h_2$ and $A>0$ let $U(h_1,h_2,A):= \{x=(\widetilde x, x_d)\in \R^d:h_1<x_d<h_2, |\widetilde x|<A\}$. Then, for $v\in C_0^\infty(\R^d)$ in the first instance, and then by density for all $v\in H^1(\R^d)$, it follows from \eqref{eq:1DFried} with $\epsilon=3$ that, for $h,H,A>0$,
\begin{eqnarray} \nonumber
\int_{U(0,h,A)} |v|^2\, \rd x & \leq & \frac{4h}{H}\int_{U(h,h+H,A)}|v|^2\, \rd x + \frac{2h(h+H)}{3}\int_{U(0,h+H,A)}|\partial_d v|^2 \,\rd x.
\end{eqnarray}
Similarly,
\begin{eqnarray} \nonumber
\int_{U(-h,0,A)} |v|^2\, \rd x & \leq & \frac{4h}{H}\int_{U(-h-H,-h,A)}|v|^2\, \rd x + \frac{2h(h+H)}{3}\int_{U(-h-H,0,A)}|\partial_d v|^2 \,\rd x,
\end{eqnarray}
for $v\in H^1(\R^d)$. Thus, for $v\in H^1(\R^d)$  it holds for $h>0$ that
\begin{eqnarray} \nonumber
\int_{B_{h}}|v|^2 \, \rd x &\leq &\int_{U(-h,h,h)} |v|^2\, \rd x\\ \nonumber
& \leq & \frac{4h}{H}\left\{\int_{U(-h-H,-h,h)}|v|^2\, \rd x + \int_{U(h,h+H,h)}|v|^2\, \rd x\right\}\\ \nonumber
 & & \hspace{5ex} +  \frac{2h(h+H)}{3}\int_{U(-h-H,h+H,h)}|\partial_d v|^2 \,\rd x\\ \nonumber
 & \leq & \frac{4h}{H}\int_{B(h, \sqrt{h^2+(h+H)^2})}|v|^2\, \rd x
  +  \frac{2h(h+H)}{3}\int_{B(0, \sqrt{h^2+(h+H)^2})} |\partial_d v|^2 \,\rd x,
\end{eqnarray}
where for $0\leq h_1\leq h_2$, $B(h_1,h_2) := B_{h_2}\setminus B_{h_1}$. Applying this bound with $h=2R$ and $H=R$ we obtain the required result.
\end{proof}

\begin{corollary}\label{cor:Fried}
If $v\in H^1(\Omega_R)$ with $\gamma_+v =0$ on $\Gamma$, and $R\geq \sqrt{13} R_0$, then
\beq\label{eq:Fried2}
\int_{\Omega_{2R_0}}|v|^2 \, \rd x \leq  8\int_{\Omega_{R}\setminus \Omega_{2R_0}} |v|^2\, \rd x   + 4 R_0^2 \int_{\Omega_{R}} |\partial_d v|^2 \,\rd x.
\eeq
\end{corollary}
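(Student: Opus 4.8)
The plan is to deduce Corollary \ref{cor:Fried} from Lemma \ref{lem:Fried} by extending $v$ by zero through the obstacle $\Omega_-$. The hypothesis $\gamma_+v=0$ on $\Gamma$ is present precisely to make this zero-extension admissible as an $H^1$-function, and the condition $R\geq\sqrt{13}R_0$ is what guarantees that all the balls occurring in \eqref{eq:Fried} (applied with the parameter $R$ there replaced by $R_0$) sit inside $B_R$.

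First I would define $\tilde v$ on $B_R$ to equal $v$ on $\Omega_+\cap B_R$ and $0$ on $\Omega_-\cap B_R$. Since $v\in H^1(\Omega_+\cap B_R)$ has zero trace on $\Gamma$ and $\Gamma$ is Lipschitz, a standard extension-by-zero argument (see, e.g., \cite{Mc:00}) shows that $\tilde v\in H^1(B_R)$, with $\nabla\tilde v$ equal to $\nabla v$ on $\Omega_+\cap B_R$ and vanishing on $\Omega_-\cap B_R$; in particular $\partial_d\tilde v$ vanishes on $\Omega_-\cap B_R$. Next, since $R\geq\sqrt{13}R_0$ gives $B_{\sqrt{13}R_0}\subset B_R$, and the conclusion of Lemma \ref{lem:Fried} (with $R$ replaced by $R_0$) only involves integrals over subsets of $B_{\sqrt{13}R_0}$, I would apply that lemma to any $\hat v\in H^1(\R^d)$ which agrees with $\tilde v$ on $B_{\sqrt{13}R_0}$ --- such a $\hat v$ exists because $B_R$ admits a Sobolev extension operator, and when $R>\sqrt{13}R_0$ strictly one may simply take $\hat v=\psi\tilde v$ with $\psi\in C_0^\infty(B_R)$, $\psi\equiv1$ on $B_{\sqrt{13}R_0}$. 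Since $\hat v=\tilde v$ (and $\partial_d\hat v=\partial_d\tilde v$) throughout $B_{\sqrt{13}R_0}$, this gives
\beq
\int_{B_{2R_0}}|\tilde v|^2\,\rd x \;\leq\; 8\int_{B_{\sqrt{13}R_0}\setminus B_{2R_0}}|\tilde v|^2\,\rd x \;+\; 4R_0^2\int_{B_{\sqrt{13}R_0}}|\partial_d\tilde v|^2\,\rd x.
\eeq

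Finally I would rewrite this in terms of $v$, using that $\tilde v$ vanishes off $\Omega_+$ and that $\Gamma$ has $d$-dimensional Lebesgue measure zero: the left-hand side equals $\int_{\Omega_{2R_0}}|v|^2\,\rd x$; the first term on the right equals $\int_{\Omega_+\cap(B_{\sqrt{13}R_0}\setminus B_{2R_0})}|v|^2\,\rd x$, which is at most $\int_{\Omega_R\setminus\Omega_{2R_0}}|v|^2\,\rd x$ because $B_{\sqrt{13}R_0}\subset B_R$; and the last integral equals $\int_{\Omega_+\cap B_{\sqrt{13}R_0}}|\partial_d v|^2\,\rd x\leq\int_{\Omega_R}|\partial_d v|^2\,\rd x$. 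Substituting these three relations yields \eqref{eq:Fried2}. The one genuinely delicate point --- and the step I would be most careful with --- is the justification that extension by zero across $\Gamma$ stays in $H^1$; this is exactly where both the boundary condition $\gamma_+v=0$ and the Lipschitz regularity of $\Gamma$ are used, and everything after it is elementary bookkeeping of inclusions of balls.
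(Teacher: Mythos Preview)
Your proposal is correct and follows essentially the same approach as the paper: extend $v$ by zero across $\Omega_-$ (using $\gamma_+v=0$ and the Lipschitz regularity of $\Gamma$), extend further to an $H^1(\R^d)$ function, and apply Lemma~\ref{lem:Fried} with $R$ replaced by $R_0$. The paper's proof is a one-sentence version of your argument, and your more detailed bookkeeping of the ball inclusions and the two-step extension is accurate.
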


\bpf
This follows from Lemma \ref{lem:Fried} since, given $v\in H^1(\Omega_R)$ with $\gamma_+v =0$ on $\Gamma$, we can extend the definition of $v$ to $\R^d$ so that $v\in H^1(\R^d)$ and $v=0$ in $\Omega_-$.
\epf

\subsection{Boundary Sobolev spaces and interpolation} \label{sec:interp}
We use boundary Sobolev spaces $H^s(\Gamma)$ defined in the usual way (see, e.g., \cite[Pages 98 and 99]{Mc:00}), and denote by $H^s_k(\Gamma)$ the space $H^s(\Gamma)$ equipped with a wavenumber dependent norm $\|\cdot\|_{H_k^s(\Gamma)}$. Precisely, we equip $H^0(\Gamma)=H^0_k(\Gamma)=L^2(\Gamma)$ with the $L^2(\Gamma)$ norm. We define $\|\cdot\|_{H^s(\Gamma)}$ and $\|\cdot\|_{H_k^s(\Gamma)}$ for $s=1$ by
\beq \label{eq:Ganorm}
\|\phi\|_{H^1(\Gamma)}^2 = \|\nabla_S \phi\|^2_{L^2(\Gamma)} +\|\phi\|^2_{L^2(\Gamma)} \quad \mbox{and} \quad \|\phi\|_{H^1_k(\Gamma)}^2 = \|\nabla_S \phi\|^2_{L^2(\Gamma)} +k^2\|\phi\|^2_{L^2(\Gamma)},
\eeq
and for $0<s<1$ by interpolation, choosing the specific norm given by the complex interpolation method (equivalently, by real methods of interpolation appropriately defined and normalised; see \cite{McC:92}, \cite[Remark 3.6]{ChHeMo:15}). We then define the norms on $H^s(\Gamma)$ and $H^s_k(\Gamma)$ for $-1\leq s<0$ by duality,
\beq \label{eq:Ganorm3}
\|\phi\|_{H^s(\Gamma)} := \sup_{0\neq\psi\in H^{-s}(\Gamma)}\, \frac{|\langle\phi,\psi\rangle_\Gamma|}{\|\psi\|_{H^{-s}(\Gamma)}} \quad \mbox{and} \quad \|\phi\|_{H_k^s(\Gamma)} := \sup_{0\neq\psi\in H^{-s}(\Gamma)}\, \frac{|\langle\phi,\psi\rangle_\Gamma|}{\|\psi\|_{H_k^{-s}(\Gamma)}},
\eeq
for $\phi\in H^s(\Gamma)$,
where $\langle\phi,\psi\rangle_\Gamma$ denotes the standard duality pairing that reduces to $(\phi,\psi)_\Gamma$, the inner product on $L^2(\Gamma)$, when $\phi\in L^2(\Gamma)$. In the terminology of \cite[Remark 3.8]{ChHeMo:15}, with the norms we have selected, $\{H^s(\Gamma):-1\leq s\leq 1\}$ and $\{H^s_k(\Gamma):-1\leq s\leq 1\}$ are {\em exact interpolation scales}, so that, if $B:H_k^{s_j}(\Gamma)\to H_k^{t_j}(\Gamma)$ is a bounded linear operator and
$$
\|B\|_{H_k^{s_j}(\Gamma)\to H_k^{t_j}(\Gamma)}\leq C_j, \quad \mbox{for }j=1,2,
$$
with $s_j,t_j\in [-1,1]$, then $B:H_k^s(\Gamma)\to H_k^t(\Gamma)$ and
\beq \label{eq:interp}
\|B\|_{H_k^{s}(\Gamma)\to H_k^{t}(\Gamma)}\leq C_1^{1-\theta} C_2^\theta,\quad \mbox{for } s=\theta s_1+(1-\theta)s_2 \mbox{ and }t=\theta t_1+(1-\theta)t_2 \mbox{ with } 0<\theta<1.
\eeq
Moreover (by definition) $H^{-s}(\Gamma)$ is an isometric realisation of $(H^s(\Gamma))^\prime$, the dual space of $H^s(\Gamma)$, for $-1\leq s\leq 1$, so that, if $A:H_k^{s}(\Gamma)\to H_k^{t}(\Gamma)$ is bounded and $B$ is the adjoint of $A$ with respect to the $L^2(\Gamma)$ inner product, or with respect to the {\em real} inner product $(\cdot,\cdot)_\Gamma^r$, defined by $(\phi,\psi)_\Gamma^r=\int_\Gamma \phi\psi \rd s$, then $B:H^{-t}_k(\Gamma)\to H_k^{-s}(\Gamma)$ is bounded and
\begin{equation} \label{eq:adj}
\|B\|_{H_k^{-t}(\Gamma)\to H_k^{-s}(\Gamma)}=\|A\|_{H_k^{s}(\Gamma)\to H_k^{t}(\Gamma)}.
\end{equation}
Combining these observations, if $A:H_k^{s}(\Gamma)\to H_k^{t}(\Gamma)$ is bounded and self-adjoint, or is self-adjoint with respect to the real inner product, meaning that $(A\phi,\psi)_\Gamma^r = (\phi,A\psi)_\Gamma^r$, for $\phi,\psi\in H^1(\Gamma)$, then
\begin{equation} \label{eq:adj2}
\|A\|_{H_k^{\sigma}(\Gamma)\to H_k^{\tau}(\Gamma)}\leq\|A\|_{H_k^{s}(\Gamma)\to H_k^{t}(\Gamma)},
\end{equation}
for $\sigma = \theta s-(1-\theta)t$, $\tau = \theta t-(1-\theta)s$, and $0\leq \theta \leq 1$.

\section{Proof of Theorem \ref{thm:resol} on resolvent estimates} \label{sec:resol}

\subsection{The ideas behind the proof}\label{sec:idea}

The proof is based on the Morawetz-type identity \eqref{eq:morid1}.
Recall that in \cite{Mo:75}, \cite[\S4]{MoRaSt:77}, Morawetz and co-workers showed that if there exists a vector field $Z(x)$, $R>R_\Gamma= \max_{x\in \Gamma}|x|$, and $c_1>0$ such that
\beq\label{eq:Mor1}
\bZ(\bx) = \bx \quad\text{ in a neighbourhood of } \GR  = \partial B_R,
\eeq
\beq\label{eq:Mor2}
\Re \big( \partial_i Z_j(\bx) \xi_i \overline{\xi_j}\big) \geq 0 \tfa \xi \in\Com^d \tand \bx \in \OR, \quad\tand \bZ(\bx)\cdot\bn(\bx) \geq c_1 \quad \tfa\bx \in \Gamma,
\eeq
then \eqref{eq:morid1} can be used to prove the resolvent estimate \eqref{eq:nt_estimate} (\cite[\S4]{MoRaSt:77} then constructed such a $Z$ for a class of obstacles slightly more restrictive than nontrapping). Implicit in \cite{Mo:75} is the fact that one can replace the two conditions \eqref{eq:Mor2} with
\beq\label{eq:Mor2a}
\Re \big( \partial_i Z_j(\bx) \xi_i \overline{\xi_j}\big) \geq c_2|\xi|^2 \tfa \xi \in\Com^d \tand \bx \in \OR, \quad\tand \bZ(\bx)\cdot\bn(\bx) \geq 0 \quad \tfa\bx \in \Gamma,
\eeq
for some $c_2>0$; i.e., one needs strict positivity \emph{either} in $\OR$ \emph{or} on $\Gamma$. Note that $Z(x)=x$ satisfies this second set of conditions, implying that the resolvent estimate holds for $\Oi$ that are star-shaped (see also \cite{MoLu:68, ChMo:08}).

We cannot expect to satisfy one of these sets of conditions on $Z$ (either \eqref{eq:Mor1} and \eqref{eq:Mor2} or \eqref{eq:Mor1} and \eqref{eq:Mor2a}) for  every $(R_0,R_1)$ obstacle, since we know the nontrapping resolvent estimate \eqref{eq:nt_estimate} does not hold for $(R_0,R_1,a)$ parallel trapping obstacles. The $Z$ that we use in our arguments is the one in the definition of $(R_0,R_1)$ obstacles, namely \eqref{eq:Zdeff}. By the definition of $(R_0,R_1)$ obstacles, we have $Z(x)\cdot n(x)\geq 0$ for all $x\in \Gamma$, but now, for $r<R_0$ at least,
$\Re \big( \partial_i Z_j(\bx) \xi_i \overline{\xi_j}\big) = |\xi_d|^2$, which is only positive semi-definite. (Note that the vector field $e_d x_d$ is often used in arguments involving Rellich/Morawetz-type identities in rough surface scattering; see \cite{Re:43,ZhCh:98,ChMo:05,LeRi:09}, \cite[\S  5.3]{ChGrLaSp:12}, \cite[\S I.4]{MoSp:14}, and the references therein.)

We apply the Morawetz-type identity \eqref{eq:morid1} in $\OR$ with $Z$ given by \eqref{eq:Zdeff} in terms of a function $\chi$ that satisfies the constraints of Definition \ref{def:trapb}, $\beta=R$, and $\alpha$ defined by \eqref{eq:alpha_def} below (the rationale for this choice of $\alpha$ is explained below \eqref{eq:Zqf}). Using Lemma \ref{lem:2.1} to deal with the contribution at infinity, we find in Lemma \ref{lem:E0} below that
\begin{align} \nonumber
 &\int_{\Omega_R} \Big(2|\partial_d u|^2\big(1-\chi(r)\big) + |\nabla u|^2(2-q)\chi(r) +qk^2|u|^2\chi(r)+ 2r|\partial_r u|^2\chi^\prime(r) \Big)\, \rd x \\ \nonumber
 & - 2\Re\int_{\Omega_R} x_d\partial_d \bar u\partial_r u\chi^\prime(r)\, \rd x \\  \label{eq:rellich4}
&  \hspace{0.5cm} \leq -2kR\Im \int_{\Omega_R} f\bar u\, \rd x + \Re\int_{\Omega_R} f\Big(2 x_d\partial_d \bar u \big(1-\chi(r)\big) + 2r\partial_r \bar u\chi(r) +2\alpha \bar u\Big)\, \rd x + \int_{\Omega_R}  \Delta \alpha \nus\, \rd x,
\end{align}
for any $q\in [0,1]$. We see that in the ``trapping region", namely when $\chi=0$, we only have control of $|\partial_d u|^2$, but in the ``nontrapping region" (in $\supp(\chi)$) we have control of $|\gu|^2 + k^2 |u|^2$ (as expected, since here $Z=x$).

We then proceed via a series of lemmas. In Lemma \ref{lem:E1} we get rid of the sign-indefinite ``cross" term on the second line of \eqref{eq:rellich4}.
In Lemma \ref{lem:E3new} we use the Poincar\'e-Friedrichs-type inequality of Corollary \ref{cor:Fried}, and then Lemma \ref{lem:Green}, to put first $|u|^2$ and then $|\gu|^2$ back in the trapping region -- here is the place where we lose powers of $k$ compared to the nontrapping estimate, since, in the Poincar\'e-Friedrichs-type inequality, $|\partial_d u|^2$ is bounded below by $|u|^2$ without a corresponding factor of $k$.

Finally, in Lemmas \ref{lem:E4new} and \ref{lem:E5new} we  obtain bounds that imply the resolvent estimate \eqref{eq:resol}. In particular, in Lemma \ref{lem:E5new} we bound the term involving the Laplacian of $\alpha$,  using that, by our assumptions on $\chi$ back in Definition \ref{def:trapb}, $\Delta \alpha$ is continuous and vanishes for $r\leq R_0$.

As discussed in \S\ref{sec:discuss}, the analogue of the resolvent estimate \eqref{eq:resol} in the case of rough surface scattering was proved in \cite[Theorem 4.1]{ChMo:05}; the proof of this estimate uses $Z(x)= e_d x_d$, along with the analogue of Lemma \ref{lem:2.1} in this case \cite[Lemma 2.2]{ChMo:05}, avoiding the subtleties of transitioning between the vector fields $e_d x_d$ and $x$ that we encounter here.

After Theorem \ref{thm:resol} we discussed how quasimodes can be constructed that indicate that the power on the right-hand side of the bound \eqref{eq:resol4} should be reduced from $k^2$ to $k$ to obtain a sharp bound. Choosing $\beta$ in the multiplier $\cZ$ \eqref{eq:multa} to be zero when $\chi=0$ has the potential to produce this new bound, but $\beta$ cannot then be brought up to equal $R$ on $\Gamma_R$ (so that one can use Lemma \ref{lem:2.1} to deal with the contribution from infinity) whilst keeping the resulting cross term involving $\nabla \beta$ in \eqref{eq:morid1} under control.

\subsection{Lemmas \ref{lem:E0}--\ref{lem:E5new}, their proofs, and the proof of Theorem \ref{thm:resol}}
Throughout this subsection $\chi$ will be any function such that $\Oi$ is an $(R_0,R_1)$ obstacle in the sense of Definition \ref{def:trapb}, so that, in particular,
\begin{equation} \label{eq:chimax}
c_\chi := \sup_{r>0} \left(r\chi^\prime(r)\right) < 4.
\end{equation}
The vector field $Z$ will be defined in terms of $\chi$ by \eqref{eq:Zdeff}, and we will take
\begin{equation} \label{eq:Rlower}
R>\max(R_\Gamma,R_1,\sqrt{13}\,R_0)
\end{equation}
so that, respectively, $\overline{\Oi}\subset \Omega_R$, $\chi=1$ in a neighbourhood of $\partial B_R$, and Corollary \ref{cor:Fried} applies.

\begin{lemma}\label{lem:E0}
Let $\Oi$ be an $(R_0,R_1)$ obstacle,  and let
\beq\label{eq:alpha_def}
2\alpha := \nabla \cdot Z -q\,\chi(r),
\eeq
for some $q\in[0,1]$.
If $u$ is the solution of the exterior Dirichlet problem described in Theorem \ref{thm:resol} and $R$ is large enough so that $\supp(f)\subset \Omega_R$, then \eqref{eq:rellich4} holds.
\end{lemma}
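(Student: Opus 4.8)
}
The plan is to specialize the integrated Morawetz identity of Lemma~\ref{lem:morint} to $D=\Omega_R$, $v=u$, with $Z$ as in \eqref{eq:Zdeff}, $\beta=R$, and $\alpha$ as in \eqref{eq:alpha_def}, and then to discard the boundary integral over $\partial\Omega_R=\Gamma\cup\Gamma_R$ by showing it is non-positive. First one checks the hypotheses of Lemma~\ref{lem:morint}: $Z\in(C^1(\overline{\Omega_R}))^d$ and $\alpha\in C^2(\overline{\Omega_R})$ since $\chi\in C^3[0,\infty)$ (this is precisely why $C^3$ regularity is built into Definition~\ref{def:trapb}), $\beta=R$ is real, and $u|_{\Omega_R}\in V(\Omega_R)$ by standard regularity results for the exterior Dirichlet problem on a Lipschitz obstacle together with $\gamma_+u=0$ and the fact that $u$ is $C^\infty$ near $\Gamma_R$ (as $\supp f\subset\Omega_R$). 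Since $\cL u=-f$, substitution into \eqref{eq:morid_int} yields an identity whose left-hand volume integral, after moving the $f$- and $\Delta\alpha$-dependent terms to the right, is precisely \eqref{eq:rellich4} up to the surface integral.

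The bulk of the argument is the bookkeeping that matches the volume integrand of \eqref{eq:morid_int} with that of \eqref{eq:rellich4}. Computing $\partial_i Z_j$ from \eqref{eq:Zdeff} gives
\[
2\Re\big(\partial_i Z_j\,\partial_i u\,\overline{\partial_j u}\big)
= 2(1-\chi)|\partial_d u|^2 + 2\chi|\nabla u|^2 + 2r\chi'|\partial_r u|^2 - 2\Re\big(x_d\chi'\,\partial_d\bar u\,\partial_r u\big),
\]
while \eqref{eq:alpha_def} gives $2\alpha-\nabla\cdot Z=-q\chi$, so the term $-(2\alpha-\nabla\cdot Z)(k^2|u|^2-|\nabla u|^2)=q\chi\,(k^2|u|^2-|\nabla u|^2)$ combines with the display above to produce exactly the left-hand integrand of \eqref{eq:rellich4} (the coefficient $2-q$ of $\chi|\nabla u|^2$ and the term $qk^2\chi|u|^2$ appear here). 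The remaining volume contributions $-2\Re(\overline{\cZ u}\,\cL u)=-2\Re\big((Z\cdot\overline{\nabla u}+\ri kR\bar u+\alpha\bar u)(-f)\big)$ and $-\Delta\alpha|u|^2$, on using $Z\cdot\overline{\nabla u}=x_d\partial_d\bar u(1-\chi)+r\partial_r\bar u\,\chi$, unpack --- after moving to the right --- as the term $-2kR\,\Im\int_{\Omega_R} f\bar u$ (coming from the $-\ri kR u$ in $\cZ u$), the $f$-term $\Re\int_{\Omega_R} f\big(2x_d\partial_d\bar u(1-\chi)+2r\partial_r\bar u\,\chi+2\alpha\bar u\big)$, and $\int_{\Omega_R}\Delta\alpha|u|^2$: precisely the right-hand side of \eqref{eq:rellich4}.

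It remains to show the surface integral in \eqref{eq:morid_int} over $\Gamma\cup\Gamma_R$ is $\le 0$. On $\Gamma$ the outward unit normal of $\Omega_R$ is $\nu=-n$, and as $\gamma u=\gamma_+u=0$ every term involving $\gamma u$ or $\nabla_S(\gamma u)$ drops, leaving $(Z\cdot\nu)|\partial_\nu u|^2=-(Z\cdot n)|\partial_n^+u|^2\le 0$ by the defining property $Z\cdot n\ge 0$ of an $(R_0,R_1)$ obstacle. On $\Gamma_R=\partial B_R$ one has $\nu=\widehat{x}$, $\partial_\nu=\partial_r$, and since $R>R_1$ the cutoff satisfies $\chi\equiv 1$ in a neighbourhood of $\Gamma_R$; there $Z=x$, so $Z\cdot\nu=R$, $Z\cdot\overline{\nabla_S(\gamma u)}=R\,\widehat{x}\cdot\overline{\nabla_S u}=0$, and $\alpha=(d-q)/2$ is constant so $\partial_\nu\alpha=0$. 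Hence the integrand on $\Gamma_R$ reduces to
\[
R\big(|\partial_r u|^2-|\nabla_S u|^2+k^2|u|^2\big)-2kR\,\Im\big(\bar u\,\partial_r u\big)+(d-q)\,\Re\big(\bar u\,\partial_r u\big),
\]
which, after integration over $\Gamma_R$, is exactly the left-hand side of \eqref{eq:2.1} with the constant there equal to $(d-q)/2$. Since $u$ solves the homogeneous Helmholtz equation and satisfies \eqref{eq:src} outside a large ball, Lemma~\ref{lem:2.1} shows this integral is $\le 0$ provided $d-q\ge d-1$, i.e.\ provided $q\le 1$ --- which is exactly the assumed range. Combining the two boundary contributions with the identity gives \eqref{eq:rellich4}. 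The only genuinely delicate step is the verification $u|_{\Omega_R}\in V(\Omega_R)$ (in particular $\partial_n^+u\in L^2(\Gamma)$ for a merely Lipschitz $\Gamma$); everything else is algebra, with the pleasant feature that the range $q\in[0,1]$ is dictated precisely by the hypothesis of Lemma~\ref{lem:2.1}.
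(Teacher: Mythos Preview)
Your proof is correct and follows essentially the same route as the paper: apply the integrated Morawetz identity of Lemma~\ref{lem:morint} on $\Omega_R$ with $Z$, $\beta=R$, and the specific $\alpha$ of \eqref{eq:alpha_def}; discard the $\Gamma$-contribution via $\gamma_+u=0$ and $Z\cdot n\geq 0$, and the $\Gamma_R$-contribution via Lemma~\ref{lem:2.1} (using that $2\alpha=d-q\geq d-1$ there, which is exactly the constraint $q\leq 1$); then unpack the volume terms using \eqref{eq:Zdnv}--\eqref{eq:Zqf}. The only cosmetic difference is that the paper first derives \eqref{eq:morid_int3} for a generic $\alpha$ constant near $\Gamma_R$ with $2\alpha\geq d-1$, and only afterwards fixes $\alpha$ by \eqref{eq:alpha_def}, whereas you specialise from the outset.
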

\bpf
The regularity result of Ne\v{c}as \cite[\S5.1.2]{Ne:67}, \cite[Theorem 4.24(ii)]{Mc:00} (stated for $u$ the solution of the exterior Dirichlet problem in \cite[Lemma 3.5]{Sp:14}) implies that $u\in V(\Omega_R)$ defined by \eqref{eq:V}.
We use the integrated Morawetz identity \eqref{eq:morid_int} with $D=\Omega_R$,
 $Z$ the vector field in \eqref{eq:Zdeff} (observe that $Z\in(C^3(\R^d))^d$ by Definition \ref{def:trapb}),
$\beta=R$, and $\alpha\in C^2(\overline{\Omega_R})$. We fix $\alpha$  as given in \eqref{eq:alpha_def} later, but assume at this stage that $\alpha$ is constant in a neighbourhood of $\Gamma_R$ and  $2\alpha\geq d-1$ on $\Gamma_R:=\partial B_R$.
Using the fact that $\gamma_+ u=0$ on $\Gamma$ and $Z=x$ on $\Gamma_R$, we obtain
\begin{align}\nonumber
&\int_{\Omega_R} \Big[-2 \Re \big(\overline{\cZ u } \,f \big)
+2 \Re \big(\partial_i Z_j \partial_i u \overline{\partial_j u}\big)
- \big(2 \alpha-\nabla \cdot \bZ  \big)\big( k^2 \nus-\ngus\big)
-\Delta \alpha \nus
\Big]\rd x
\\&+ \int_\Gamma (Z\cdot n)\left|\partial_n u\right|^2 \rd s
= \int_{\GR} R\left( \left|\pdiff{u}{r}\right|^2 - |\nT u|^2 + k^2 \nus \right) \rd s+ 2 \Re\int_{\GR} \left(
\big(\ri k R\overline{u} + \alpha \overline{u}\big)
\pdiff{u}{r}
\right)\rd s.
\label{eq:morid_int2}
\end{align}
Since  $2\alpha\geq d-1$ on $\GR$, the right-hand side of \eqref{eq:morid_int2} is non-positive by Lemma \ref{lem:2.1}.
Then, since $Z\cdot n\geq 0$ almost everywhere on $\Gamma$ from the definition of an $(R_0,R_1)$ obstacle (Definition \ref{def:trapb}), we have
\begin{align}\nonumber
&
\int_{\Omega_R} \Big[2 \Re \big(\partial_i Z_j \partial_i u \overline{\partial_j u}\big)
- \big(2 \alpha-\nabla \cdot \bZ  \big)\big( k^2 \nus-\ngus\big)-\Delta \alpha \nus\Big]\rd x\\
&\hspace{2cm}\leq 2\Re \int_{\Omega_R}  \big(Z\cdot \overline{\gu} + \ri k R \overline{u} + \alpha \overline{u}\big) \,f\, \rd x.
\label{eq:morid_int3}
\end{align}
Simple calculations imply that (with the summation convention for the indices $i$ and $j$ but not $d$)
\begin{align}
\label{eq:Zdnv}
Z\cdot \nabla u  =  x_d\partial_d u \big(1-\chi(r)\big) + r\partial_r u\chi(r)
\end{align}
and
\begin{align} \label{eq:Zqf}
\partial_iZ_j\partial_i u\overline{\partial_j u}&= |\partial_d u|^2\big(1-\chi(r)\big) + |\nabla u|^2\chi(r) + \left(r|\partial_r u|^2-x_d\partial_d \bar u\partial_r u\right)\chi^\prime(r).
\end{align}
We now choose $\alpha \in C^2(\overline{\Omega_R})$ as in \eqref{eq:alpha_def},
the rationale behind this choice that: (i) we want $2\alpha$ to
be constant in a neighbourhood of $\Gamma_R$ and indeed satisfy
 $2\alpha \geq d-1$ there, allowing the application above of Lemma \ref{lem:2.1}; and (ii) we want $2\alpha=\nabla\cdot Z$ in the trapping region to kill the sign-indefinite combination $k^2 \nus-|\gu|^2$ in \eqref{eq:morid_int3}, and leave $2 \Re (\partial_i Z_j \partial_i u \overline{\partial_j u})$ as the only volume term in this region.
Using \eqref{eq:Zdnv} and \eqref{eq:Zqf} in \eqref{eq:morid_int3}, we find
\begin{align} \nonumber
&2\Re\int_{\Omega_R} \Big(|\partial_d u|^2\big(1-\chi(r)\big) + |\nabla u|^2\chi(r) + \left(r|\partial_r u|^2- x_d\partial_d \bar u\partial_r u\right)\chi^\prime(r) \, -\Delta \alpha \nus\Big)\, \rd x \\\ \nonumber
&-q\int_{\Omega_R}\chi(r) (|\nabla u|^2-k^2|u|^2)\, \rd x\\ \nonumber
&\hspace{2cm} \leq  -2kR\Im \int_{\Omega_R} f\bar u\, \rd x + \Re\int_{\Omega_R} f\Big(2 x_d\partial_d \bar u \big(1-\chi(r)\big)+ 2r\partial_r \bar u\chi(r) +2\alpha \bar u\Big) \rd x
\end{align}
which rearranges to the result \eqref{eq:rellich4}.
\epf

\ble\label{lem:E1}
Let $\Oi$ be an $(R_0,R_1)$ obstacle.
 If $v\in H^1(\Omega_R)$, then
\begin{align} \nonumber
& \int_{\Omega_R} \Big[2|\partial_d v|^2\big(1-\chi(r)\big)+ |\nabla v|^2(2-q)\chi(r) + 2r|\partial_r v|^2\chi^\prime(r)\Big]\, \rd x
- 2\Re\int_{\Omega_R} x_d\partial_d \bar v\partial_r v\chi^\prime(r)\, \rd x\\ \label{eq:lemE1}
& \geq  \left(2-q-\mu- \frac{c_\chi}{2}\right) \int_{\Omega_R} |\partial_d v|^2\, \rd x + \mu\int_{\Omega_R} |\nabla v|^2\chi(r) \, \rd x,
\end{align}
for all $q, \mu>0$ with $0<q+\mu \leq 2$.
\ele

\bpf
By the
inequality \eqref{eq:Cauchy},
it follows that
$$
\left|2\Re\int_{\Omega_R} x_d\partial_d \bar v\partial_r v\chi^\prime(r) \rd x\right|\leq \varepsilon^{-1} \int_{\Omega_R} r|\partial_d v|^2\chi^\prime(r)\, \rd x + \varepsilon\int_{\Omega_R} r|\partial_r v|^2\chi^\prime(r) \, \rd x,
$$
for all $\varepsilon>0$. Using this last inequality with $\varepsilon =2$, along with the definition of $c_\chi$ \eqref{eq:chimax}, we have that the left-hand side of \eqref{eq:lemE1} is
\beqs
\geq \int_{\Omega_R} \Big[\left(2(1-\chi(r)) -\frac{c_\chi}{2}\right)|\partial_d v|^2 + |\nabla v|^2(2-q)\chi(r)\Big]\, \rd x.
\eeqs

Further, since $2-q-\mu\geq 0$ and $|\nabla v|^2 \geq |\partial_dv|^2$,
\begin{align*}
2|\partial_d v|^2 (1-\chi) + (2-q)|\nabla v|^2 \chi &= 2|\partial_d v|^2 (1-\chi) + (2-q-\mu)|\nabla v|^2 \chi  + \mu |\nabla v|^2 \chi  \\
&\geq (2- q -\mu) |\partial_d v|^2 + \mu |\nabla v|^2 \chi,
\end{align*}
and the result \eqref{eq:lemE1} follows.
\epf

\ble\label{lem:E3new}
Let $\Oi$ be an $(R_0,R_1)$ obstacle and $p,q>0$. If $v\in H^1(\Omega_R)$ with $\gamma_+ v = 0$, then
\begin{align*}
p\int_{\Omega_R}|\partial_d v|^2\, \rd x + \frac{qk^2}{2}\int_{\Omega_R}|v|^2\chi(r)\, \rd x \geq \frac{p}{4R_0^2}\int_{\Omega_R}|v|^2\, \rd x,
\end{align*}
if $k$ is large enough so that
\begin{equation} \label{eq:klarge}
k^2R_0^2  \geq \frac{9p}{2q\,\chi(2R_0)}.
\end{equation}
\ele
\bpf Since $\chi$ satisfies the constraints (i) and (ii) of Definition \ref{def:trapb}, $\chi(r)\geq \chi(2R_0)$ for $r\geq 2R_0$, so that
$$
\frac{qk^2}{18}\int_{\Omega_R}|v|^2\chi(r)\, \rd x \geq \frac{qk^2\, \chi(2R_0)}{18}\int_{\Omega_R\setminus \Omega_{2R_0}}|v|^2\, \rd x.
$$
Using Corollary \ref{cor:Fried} (which applies since $R\geq \sqrt{13}R_0$ by our assumption \eqref{eq:Rlower}),
and provided that \eqref{eq:klarge} holds, we have
\begin{eqnarray*}
& & \hspace{-2cm} p\int_{\Omega_R}|\partial_d v|^2\, \rd x + \frac{8qk^2}{18}\int_{\Omega_R}|v|^2\chi(r)\, \rd x\\
& \geq & \frac{p}{4R_0^2}\left(4R_0^2 \int_{\Omega_R}|\partial_d v|^2\, \rd x + 8 \int_{\Omega_R\setminus \Omega_{2R_0}}|v|^2\, \rd x\right) \geq \frac{p}{4R_0^2}\int_{\Omega_{2R_0}}|v|^2\, \rd x,
\end{eqnarray*}
so that
\begin{eqnarray*}
& & \hspace{-2cm} p\int_{\Omega_R}|\partial_d v|^2\, \rd x + \frac{qk^2}{2}\int_{\Omega_R}|v|^2\chi(r)\, \rd x\\
& \geq & \frac{p}{4R_0^2}\int_{\Omega_{2R_0}}|v|^2\, \rd x + \frac{qk^2}{18}\int_{\Omega_R}|v|^2\chi(r)\, \rd x\\
& \geq & \frac{p}{4R_0^2}\int_{\Omega_{2R_0}}|v|^2\, \rd x + \frac{qk^2\, \chi(2R_0)}{18}\int_{\Omega_R\setminus \Omega_{2R_0}}|v|^2\, \rd x \geq \frac{p}{4R_0^2}\int_{\Omega_{R}}|v|^2\, \rd x.
\end{eqnarray*}
\epf

\ble\label{lem:E4new}
Let $\Oi$ be an $(R_0,R_1)$ obstacle and $\alpha$ be defined by \eqref{eq:alpha_def}, for some  $q\in(0,1]$.
If $u$ is the solution of the exterior Dirichlet problem described in Theorem \ref{thm:resol}, $p>0$,  $R$ is large enough so that $\supp(f)\subset \Omega_R$, and $k$ is large enough so that \eqref{eq:klarge} holds, then
\begin{eqnarray}\nonumber
& &\hspace{-2cm} \frac{p}{8k^2R_0^2}\int_{\Omega_R}\left(|\nabla u|^2 + k^2|u|^2\right) \, \rd x + \left(2-\frac{3q}{2}-p-\frac{c_\chi}{2}\right)\int_{\Omega_R} |\partial_du|^2\, \rd x\\ \nonumber
& & \hspace{-2cm} + \frac{q}{2}\int_{\Omega_R}\left(|\nabla u|^2 + k^2|u|^2\right)\chi(r) \, \rd x \\ \nonumber
& \leq & \frac{p}{8k^2R_0^2}\Re\int_{\Omega_R}f\bar u \, \rd x - 2kR\Im \int_{\Omega_R}f\bar u \, \rd x + \int_{\Omega_R}\Delta \alpha |u|^2\, \rd x\\ \label{eq:mainboundnew}
& & \hspace{0.5cm} + \Re\int_{\Omega_R}f\Big(2x_d\partial_d\bar u(1-\chi(r)) + 2r\partial_r\bar u \chi(r) + 2\alpha \bar u\Big) \, \rd x.
\end{eqnarray}
\ele

\bpf
By Lemmas \ref{lem:E0} and \ref{lem:E1}, for all $q,\mu>0$ with $q\leq 1$ and $q+\mu\leq 2$,
\begin{eqnarray*}
& &\hspace{-2cm} \left(2-q-\mu -\frac{c_\chi}{2}\right)\int_{\Omega_R} |\partial_du|^2\, \rd x + \mu \int_{\Omega_R}|\nabla u|^2\chi(r)\, \rd x + qk^2\int_{\Omega_R}|u|^2\chi(r) \, \rd x \\
& \leq & - 2kR\Im \int_{\Omega_R}f\bar u \, \rd x\\
& & \hspace{0.5cm} + \Re\int_{\Omega_R}f\Big(2x_d\partial_d\bar u(1-\chi(r)) + 2r\partial_r\bar u \chi(r) + 2\alpha \bar u\Big) \, \rd x + \int_{\Omega_R}\Delta \alpha |u|^2\, \rd x.
\end{eqnarray*}
If also $p>0$ and \eqref{eq:klarge} holds, then it follows from Lemmas \ref{lem:E3new} and \ref{lem:Green} that
\begin{eqnarray*}
p\int_{\Omega_R}|\partial_du|^2\, \rd x + \frac{qk^2}{2}\int_{\Omega_R}|u|^2\chi(r) \, \rd x \geq
\frac{p}{8k^2R_0^2}\int_{\Omega_R}\left(|\nabla u|^2 + k^2|u|^2\right) \, \rd x - \frac{p}{8k^2R_0^2}\Re\int_{\Omega_R}f\bar u \, \rd x.
\end{eqnarray*}
Combining these two inequalities and choosing $\mu =q/2$ gives the required result.
\epf

\

In the following lemma $\alpha$ is defined by \eqref{eq:alpha_def} (with $Z$ given by \eqref{eq:Zdeff}), so that
\begin{equation} \label{eq:alphaexp}
2\alpha(x) =  1+ (d-q-1)\chi(r) +(r^2-x_d^2)\chi^\prime(r)/r
\end{equation}
and
\begin{eqnarray} \nonumber
2\Delta \alpha(x) &=& \chi'(r) \left[ \frac{(d-1)(d-q)-2}{r} + \frac{(d+1)}{r} \frac{x_d^2}{r^2}\right]\\ \label{eq:DeltaAlpha}
& & \hspace{0.5cm} +\chi''(r) \left[2d-q -\frac{x^2_d}{r^2}(d+1) \right] + \chi'''(r) \frac{(r^2-x_d^2)}{r}.
\end{eqnarray}

Moreover, we use the notations
\begin{equation} \label{eq:malpha}
m_\alpha(r) := \sup_{x\in B_r} \Delta \alpha(x), \;\; \mbox{for } r>0, \;\; \mbox{ and } \; M_\alpha := m_\alpha(R).
\end{equation}
It is clear from \eqref{eq:DeltaAlpha} and since $\chi\in C^3[0,\infty)$ and $\chi(r)= 0$ for $0\leq r\leq R_0$, that $m_\alpha\in C(0,\infty)$ with $m_\alpha(r)=0$, for $0<r\leq R_0$.

\ble\label{lem:E5new}
Let $\Oi$ be an $(R_0,R_1)$ obstacle and $\alpha$ be defined by \eqref{eq:alpha_def} with
\begin{equation}\label{eq:qdef}
q = \frac{1}{8}(4-c_\chi),
\end{equation}
where $c_\chi$ is given by \eqref{eq:chimax}. Suppose that $u$ is the solution of the exterior Dirichlet problem described in Theorem \ref{thm:resol}, that $R$ is large enough so that $\supp(f)\subset \Omega_R$,  that $R_0<R_*<R_1$ and $R_*$ is chosen small enough so that
\begin{equation} \label{eq:mabound}
m_\alpha(R_*) \leq \frac{q}{128R_0^2},
\end{equation}
and that $k$ is chosen large enough so that
\begin{equation} \label{kfinalbound}
k^2 \geq \max\left(\frac{4M_\alpha}{q\chi(R_*)},\frac{9}{4R_0^2\chi(2R_0)}\right).
\end{equation}
Then (where the $k$-dependent norms are as defined in \eqref{eq:normdefs})
\begin{eqnarray} \nonumber
& &\hspace{-2cm} \frac{q^2}{32k^2R_0^2}\|u\|^2_{H^1_k(\Omega_R)}+ q^2\|\partial_d u\|^2_{L^2(\Omega_R)} + \frac{q^2}{4}\|u\|^2_{H^1_k(\Omega_R;\chi)}\\ \label{eq:mainquant}
& \leq & \left(\frac{2q^2R_0^2}{81} + 128R_0^2\left(k^2R^2 + \|\alpha\|_{L^\infty(\Omega_R)}^2\right) + 4R^2 + R_1^2\right)\|f\|_{L^2(\Omega_+)}^2.
\end{eqnarray}
If the support of $f$ does not intersect $\Omega_{R_0}$ and \eqref{eq:fchinorm} holds, then also
\begin{eqnarray} \nonumber
& &\hspace{-1.5cm} \frac{5q^2}{128k^2R_0^2}\|u\|^2_{H^1_k(\Omega_R)}+ q^2\|\partial_d u\|^2_{L^2(\Omega_R)} + \frac{q^2}{8}\|u\|^2_{H^1_k(\Omega_R;\chi)}\\ \label{eq:mainquant2}
& \leq & \left(\frac{2q^2R_0^2}{81} + 128R_0^2 \|\alpha\|_{L^\infty(\Omega_R)}^2 + 4R^2 + R_1^2\right)\|f\|_{L^2(\Omega_+)}^2 + 8R^2\|f\|_{L^2(\Omega_+;\chi^{-1})}^2.
\end{eqnarray}
\ele

\begin{proof} The assumption \eqref{kfinalbound} ensures that \eqref{eq:klarge} holds with $p=q/2$, so that \eqref{eq:mainboundnew} holds with $p=q/2$ and $q$ given by \eqref{eq:qdef}, which implies that
\begin{eqnarray}\nonumber
& &\hspace{-2cm} \frac{q}{16k^2R_0^2}\|u\|^2_{H^1_k(\Omega_R)} + 2q\|\partial_d u\|^2_{L^2(\Omega_R)} + \frac{q}{2}\|u\|^2_{H^1_k(\Omega_R;\chi)} \\ \nonumber
& \leq & \frac{q}{16k^2R_0^2}\Re\int_{\Omega_R}f\bar u \, \rd x - 2kR\Im \int_{\Omega_R}f\bar u \, \rd x + \int_{\Omega_R}\Delta \alpha |u|^2\, \rd x\\ \label{eq:mainboundnew2}
& & \hspace{0.5cm} + \Re\int_{\Omega_R}f\Big(2x_d\partial_d\bar u(1-\chi(r)) + 2r\partial_r\bar u \chi(r) + 2\alpha \bar u\Big) \, \rd x.
\end{eqnarray}
 We proceed by bounding, in terms of the left hand side, the various terms on the right hand side of this last inequality. Firstly, we note that
\begin{eqnarray} \nonumber
\int_{\Omega_R}\Delta \alpha |u|^2\, \rd x &\leq &m_\alpha(R_*)\int_{\Omega_{R_*}}|u|^2\, \rd x + M_\alpha \int_{\Omega_R\setminus \Omega_{R_*}}|u|^2\, \rd x\\ \nonumber
&\leq &m_\alpha(R_*)\|u\|_{L^2(\Omega_R)}^2 + \frac{M_\alpha}{\chi(R_*)} \int_{\Omega_R}|u|^2\chi(r)\, \rd x\\ \label{eq:Deltaalphabound}
&\leq &\frac{q}{128R_0^2}\|u\|_{L^2(\Omega_R)}^2 + \frac{k^2q}{4} \int_{\Omega_R}|u|^2\chi(r)\, \rd x,
\end{eqnarray}
since \eqref{eq:mabound} and \eqref{kfinalbound} hold. Secondly, using
\eqref{eq:Cauchy}, we see that, for $\epsilon_1,\epsilon_2>0$,
\begin{eqnarray}\nonumber
& &\hspace{-1.5cm}  \Re\int_{\Omega_R}f\Big(2x_d\partial_d\bar u(1-\chi(r)) + 2r\partial_r\bar u \chi(r)\Big) \, \rd x\\ \nonumber
& \leq &\epsilon^{-1}_1R_1^2\|f\|_{L^2(\Omega_+)}^2 + \epsilon_1\|\partial_d u\|_{L^2(\Omega_R)}^2 + R^2\epsilon^{-1}_2 \|f\|_{L^2(\Omega_+)}^2 + \epsilon_2\int_{\Omega_R}|\nabla u|^2\chi(r)\, \rd x\\ \label{eq:boundsder}
& = & \frac{R_1^2}{q}\|f\|_{L^2(\Omega_+)}^2 + q\|\partial_d u\|_{L^2(\Omega_R)}^2+ \frac{4R^2}{q} \|f\|_{L^2(\Omega_+)}^2 + \frac{q}{4}\int_{\Omega_R}|\nabla u|^2\chi(r)\, \rd x
\end{eqnarray}
if $\epsilon_1=q$ and $\epsilon_2 = q/4$. Similarly, for $\epsilon_3,\epsilon_4>0$, since $k^2R_0^2 \geq 9/4$ by \eqref{kfinalbound},
\begin{eqnarray}\nonumber
& & \hspace{-1.5cm} \frac{q}{16k^2R_0^2}\Re\int_{\Omega_R}f\bar u \, \rd x + 2\Re\int_{\Omega_R}f\alpha \bar u \, \rd x\\ \nonumber
& \leq & \frac{q}{36}\left|\int_{\Omega_R}f\bar u \, \rd x\right| + 2\left|\int_{\Omega_R}f\alpha \bar u \, \rd x\right|\\ \nonumber
& \leq &\frac{q^2}{5184}\epsilon^{-1}_3\|f\|_{L^2(\Omega_+)}^2 + \epsilon_3\|u\|_{L^2(\Omega_R)}^2 +\epsilon^{-1}_4\|\alpha\|_{L^\infty(\Omega_R)}^2 \|f\|_{L^2(\Omega_+)}^2 + \epsilon_4\|u\|_{L^2(\Omega_R)}^2\\ \label{eq:boundsstraight}
& = & \frac{2qR_0^2}{81}\|f\|_{L^2(\Omega_+)}^2 + \frac{128R_0^2}{q}\,\|\alpha\|_{L^\infty(\Omega_R)}^2\|f\|_{L^2(\Omega_+)}^2 + \frac{q}{64R_0^2}\|u\|_{L^2(\Omega_R)}^2
\end{eqnarray}
if $\epsilon_3=\epsilon_4=q/(128R_0^2)$. Finally, for $\epsilon>0$,
\begin{eqnarray}\nonumber
- 2kR\Im \int_{\Omega_R}f\bar u \, \rd x & \leq &k^2R^2\epsilon^{-1} \|f\|_{L^2(\Omega_+)}^2 + \epsilon\|u\|_{L^2(\Omega_R)}^2\\ \label{eq:boundskR}
& = & \frac{128k^2R^2R_0^2}{q}\|f\|_{L^2(\Omega_+)}^2 + \frac{q}{128R_0^2}\|u\|_{L^2(\Omega_R)}^2
\end{eqnarray}
if $\epsilon = q/(128R_0^2)$. Combining \eqref{eq:mainboundnew2}, \eqref{eq:Deltaalphabound}, \eqref{eq:boundsstraight}, and \eqref{eq:boundskR} we obtain \eqref{eq:mainquant}.

If the support of $f$ does not intersect $\Omega_{R_0}$ and \eqref{eq:fchinorm} holds then we can replace \eqref{eq:boundskR} by
\begin{eqnarray}\nonumber
- 2kR\Im \int_{\Omega_R}f\bar u \, \rd x & \leq &R^2\epsilon^{-1} \|f\|_{L^2(\Omega_+;\chi^{-1})}^2 + \epsilon k^2\int_{\Omega_R}|u|^2\chi(r)\, \rd x\\ \label{eq:boundskR2}
& = & \frac{8R^2}{q}\|f\|_{L^2(\Omega_+;\chi^{-1})}^2 + \frac{qk^2}{8}\int_{\Omega_R}|u|^2\chi(r)\, \rd x,
\end{eqnarray}
if we choose $\epsilon = q/8$. Combining \eqref{eq:mainboundnew2}, \eqref{eq:Deltaalphabound}, \eqref{eq:boundsstraight}, and \eqref{eq:boundskR2} we obtain \eqref{eq:mainquant2}.
\end{proof}

\

\bpf[Proof of Theorem \ref{thm:resol} from Lemma \ref{lem:E5new}]
The bound \eqref{eq:mainquant} implies that there exists a constant $C>0$, depending only on the function $\chi$ in Definition \ref{def:trapb},  such that
\begin{align} \label{eq:finalb5}
\frac{1}{kR_0}\|u\|_{H^1_k(\Omega_R)} + \|\partial_d u\|_{L^2(\Omega_R)} + \|u\|_{H^1_k(\Omega_R;1-\chi)}
   \leq C  R(1+kR_0) \|f\|_{L^2(\Omega_+)},
\end{align}
for $k\geq k_1$ and $R>\max(R_1,\sqrt{13}\,R_0)$, where $k_1>0$ is given by \eqref{kfinalbound}.
Clearly, \eqref{eq:finalb5}
implies that the same bound holds also for $R_\Gamma<R\leq \max(R_1,\sqrt{13}\, R_0)$, with $C$ replaced by $\max(R_1,\sqrt{13}\,R_0)C/R_\Gamma$. Thus \eqref{eq:resol} holds for $k\geq k_1$.  Given $k_0>0$, that the bound \eqref{eq:resol} holds for $k\in (k_0, k_1)$ follows by standard arguments; see
the text after Definition \ref{def:resol}.

Arguing similarly, if the support of $f$ does not intersect $\Omega_{R_0}$ and \eqref{eq:fchinorm} holds, then \eqref{eq:mainquant2} implies that there exists a constant $C^\prime>0$, depending only on the function $\chi$ in Definition \ref{def:trapb},  such that
\begin{align} \label{eq:finalb52}
\frac{1}{kR_0}\|u\|_{H^1_k(\Omega_R)} + \|\partial_d u\|_{L^2(\Omega_R)} + \|u\|_{H^1_k(\Omega_R;1-\chi)}
   \leq C^\prime R \|f\|_{L^2(\Omega_+;\chi^{-1})},
\end{align}
for $k\geq k_1$ and $R>\max(R_1,\sqrt{13}\,R_0)$, and it follows that the bound \eqref{eq:resol} holds with $k\|f\|_{L^2(\Omega_+)}$ replaced by $\|f\|_{L^2(\Omega_+;\chi^{-1})}$.
\epf

\section{Proof of Theorem \ref{thm:dtn} on the exterior DtN map} \label{sec:dtn}

\begin{definition}[$K$ resolvent estimate] \label{def:resol}
For $K\in C[0,\infty)$, with $K(k)\geq 1$ for $k>0$, we say that $\Omega_+$ {\em satisfies a $K$ resolvent estimate} if, whenever $u\in H^1_{\mathrm{loc}}(\Omega_+)$ satisfies the radiation condition \eqref{eq:src}, the boundary condition $\gamma_+u=0$, and the Helmholtz equation $\Delta u+k^2 u = -f$ in $\Omega_+$, with $f\in L^2(\Omega_+)$ compactly supported, it holds for all $R>\max_{x\in \Gamma\cup\supp(f)}|x|$ that
\begin{equation}\label{eq:resolgen}
\|u\|_{H^1_k(\Omega_R)}\lesssim K(k)\|f\|_{L^2(\Omega_+)}, \quad \mbox{for } k>0,
\end{equation}
where the omitted constant depends on $R$.
\end{definition}
To show that $\Omega_+$ satisfies a $K$ resolvent estimate it is enough to show that \eqref{eq:resolgen} holds for all sufficiently large $k$. For, as observed at the end of \S\ref{sec:infsup} below in \eqref{eq:L} and Lemma \ref{lem:CWMo}, the bound \eqref{eq:resolgen} holds for every $\Omega_+$ for all sufficiently small $k>0$. Further, for every $k_0>0$, it then follows, by continuity arguments and well-posedness at every fixed $k>0$, that \eqref{eq:resolgen} holds for $0<k\leq k_0$.  (Concretely, one route to carrying out these latter arguments is to  note that the inf-sup constant $\beta_R$, given by \eqref{eq:infsupg} below, is positive for each fixed $k$ and depends continuously on $k$, and then apply Lemma \ref{lem:CWMo}.)

The bounds \eqref{eq:dtn2} and \eqref{eq:dtn3} in Theorem \ref{thm:dtn} follow from Theorem \ref{thm:resol} combined with the following lemma; this lemma encapsulates the method laid out in \cite[\S3]{BaSpWu:16} for deriving wavenumber-explicit bounds on the exterior DtN map from resolvent estimates in the exterior domain.

\begin{lemma}[From resolvent estimates to DtN map bounds]\label{lem:recipe} Suppose that $\Omega_+$ satisfies a $K$ resolvent estimate, for some $K\in C[0,\infty)$ with $K(k)\geq 1$ for $k>0$. Then, whenever $u\in H^1_{\mathrm{loc}}(\Omega_+)$ satisfies the radiation condition \eqref{eq:src} and $\Delta u+k^2 u = 0$ in $\Omega_+$, it holds for all
$R> R_\Gamma$ that, given $k_0>0$,
\begin{equation}\label{eq:dtn2l}
\|u\|_{H^1_k(\Omega_R)} + \|\partial_n^+ u\|_{L^2(\Gamma)} \lesssim K(k) \, \|g\|_{H^1_k(\Gamma)}, \quad \mbox{for } k\geq k_0,
\end{equation}
provided  $g:= \gamma_+u\in H^1(\Gamma)$. Moreover, for $k\geq k_0$,
\begin{equation}\label{eq:dtn3l}
\|\partial_n^+ u\|_{H_k^{s-1}(\Gamma)} \lesssim K(k) \|g\|_{H_k^{s}(\Gamma)} \quad \mbox{and} \quad \|\partial_n^+ u\|_{H^{s-1}(\Gamma)} \lesssim kK(k) \|g\|_{H^{s}(\Gamma)},
\end{equation}
uniformly for $0\leq s\leq 1$, assuming, in the case $s>1/2$, that $g\in H^s(\Gamma)$.
\end{lemma}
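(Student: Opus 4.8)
The plan is to reduce the problem with boundary data $g$ to a zero-Dirichlet-data problem to which the $K$ resolvent estimate applies, then to recover the Neumann trace by a Rellich-type identity, and finally to obtain \eqref{eq:dtn3l} by duality and interpolation. The one substantial ingredient — and the step I expect to be the main obstacle — is a wavenumber-explicit ``Helmholtz lifting'' of $g$: I would invoke the construction of \cite[\S3]{BaSpWu:16}, i.e.\ an extension $v=\mathcal{E}_k g$ of $g$ from $\Gamma$ into $\Omega_+$, supported in a fixed ($k$-independent) bounded neighbourhood of $\Gamma$, with $\gamma_+ v=g$, $\cL v:=(\Delta+k^2)v\in L^2(\Omega_+)$ compactly supported, and
\[
\|v\|_{H^1_k(\Omega_+)}+\big\|(\Delta+k^2)v\big\|_{L^2(\Omega_+)}\lesssim \|g\|_{H^1_k(\Gamma)}
\]
uniformly for $k\geq k_0$. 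In boundary-flattening coordinates this amounts to expanding $g$ in the tangential frequency $\xi$ and extending each component by the solution of $h''+(k^2-|\xi|^2)h=0$, $h(0)=1$, cut off by a fixed bump in the normal variable $t$; this choice annihilates the leading part of $(\Delta+k^2)v$, while the residual commutator and curvature terms are bounded by $\|g\|_{H^1_k(\Gamma)}$. The point — and the reason a harmonic or coercive-elliptic extension will \emph{not} give the sharp $k$-power here — is that the $\re^{\ri kt}$-type oscillation is confined to the normal variable, so the tangential derivatives cost only $\|\nabla_S g\|_{L^2(\Gamma)}$ and the bookkeeping stays tight.

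Given the lifting, I would set $w:=u-v$. Then $w\in H^1_{\mathrm{loc}}(\Omega_+)$, $\gamma_+ w=0$, $w$ satisfies the radiation condition (since $v$ has compact support), and $\Delta w+k^2 w=-f$ with $f:=(\Delta+k^2)v\in L^2(\Omega_+)$ compactly supported. Applying the $K$ resolvent estimate to $w$ on $\Omega_{R'}$, for some $R'\geq\max(R,\max_{x\in\Gamma\cup\supp f}|x|)+1$, gives $\|w\|_{H^1_k(\Omega_R)}\leq\|w\|_{H^1_k(\Omega_{R'})}\lesssim K(k)\|f\|_{L^2(\Omega_+)}\lesssim K(k)\|g\|_{H^1_k(\Gamma)}$. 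With the triangle inequality, the lifting bound, and $K(k)\geq1$, this gives $\|u\|_{H^1_k(\Omega_R)}\lesssim K(k)\|g\|_{H^1_k(\Gamma)}$, the first half of \eqref{eq:dtn2l}.

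For the Neumann trace I would use the wavenumber-explicit version of Ne\v{c}as's boundary-regularity estimate, derived from Lemma \ref{lem:morint}. The qualitative regularity $u\in V(\Omega_R)$ quoted in the proof of Lemma \ref{lem:E0} (from \cite[Theorem 4.24]{Mc:00}) makes \eqref{eq:morid_int} available. Cover $\Gamma$ by finitely many Lipschitz-graph patches $\Gamma^{(j)}$; on a collar Lipschitz subdomain $D_j\subset\Omega_R\subset\Omega_+$ whose intersection with $\Gamma$ is $\Gamma^{(j)}$ up to null sets, apply \eqref{eq:morid_int} with $\alpha=\beta=0$ and $Z=Z^{(j)}$ a $C^1$ vector field equal, near $\Gamma^{(j)}$, to the constant unit vector $\pm e_{i_j}$ in the $j$-th graph direction, with the sign chosen so that $Z^{(j)}\cdot n\geq c_j>0$ on $\Gamma^{(j)}$ (possible since $|e_{i_j}\cdot n|\geq(1+L_j^2)^{-1/2}$ there, with $L_j$ the Lipschitz constant), and vanishing near $\partial D_j\setminus\overline{\Gamma^{(j)}}$. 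Since $\cL u=0$ and $\gamma_+u=g$, the identity writes $\int_{\Gamma^{(j)}}(Z^{(j)}\cdot n)|\partial_n^+u|^2\,\rd s$ as a volume integral bounded by $C_j\|u\|_{H^1_k(\Omega_R)}^2$ (as $\nabla Z^{(j)}$ and $\nabla\cdot Z^{(j)}$ are bounded and $k$-independent) plus a boundary integral over $\Gamma^{(j)}$ which, after \eqref{eq:Cauchy} on its cross-term, is bounded by $C_j\|g\|_{H^1_k(\Gamma)}^2+\tfrac{c_j}{2}\int_{\Gamma^{(j)}}|\partial_n^+u|^2\,\rd s$. Absorbing and summing over $j$ yields $\|\partial_n^+u\|_{L^2(\Gamma)}^2\lesssim\|u\|_{H^1_k(\Omega_R)}^2+\|g\|_{H^1_k(\Gamma)}^2\lesssim K(k)^2\|g\|_{H^1_k(\Gamma)}^2$, completing \eqref{eq:dtn2l}.

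Finally, for \eqref{eq:dtn3l}, let $P$ be the exterior Dirichlet-to-Neumann map $g\mapsto\partial_n^+u$, so the previous step reads $\|P\|_{H^1_k(\Gamma)\to H^0_k(\Gamma)}\lesssim K(k)$. The operator $P$ is self-adjoint with respect to the real bilinear pairing $(\cdot,\cdot)_\Gamma^r$: Green's second identity on $\Omega_R$ shows $(Pg_1,g_2)_\Gamma^r-(g_1,Pg_2)_\Gamma^r$ equals a surface integral over $\Gamma_R$ tending to $0$ as $R\to\infty$, by the radiation condition and the decay $|u|=O(r^{-(d-1)/2})$. Hence \eqref{eq:adj2}, with $(s,t)=(1,0)$, gives at once $\|P\|_{H^\sigma_k(\Gamma)\to H^{\sigma-1}_k(\Gamma)}\lesssim K(k)$ for $0\leq\sigma\leq1$, the first estimate of \eqref{eq:dtn3l}. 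The second follows from the norm comparisons $\|\phi\|_{H^t(\Gamma)}\leq\|\phi\|_{H^t_k(\Gamma)}\leq k^t\|\phi\|_{H^t(\Gamma)}$ for $0\leq t\leq1$ and $k^t\|\phi\|_{H^t(\Gamma)}\leq\|\phi\|_{H^t_k(\Gamma)}\leq\|\phi\|_{H^t(\Gamma)}$ for $-1\leq t\leq0$ (for $k\geq1$, say; in general up to a $k_0$-dependent constant): with $s-1\leq0$ these give $\|Pg\|_{H^{s-1}(\Gamma)}\leq k^{1-s}\|Pg\|_{H^{s-1}_k(\Gamma)}\lesssim k^{1-s}K(k)\|g\|_{H^s_k(\Gamma)}\leq k^{1-s}K(k)k^s\|g\|_{H^s(\Gamma)}=kK(k)\|g\|_{H^s(\Gamma)}$.
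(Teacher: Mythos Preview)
Your four-step skeleton (lift the boundary data, apply the resolvent estimate to the zero-trace remainder, recover $\partial_n^+u$ via a Rellich/Ne\v{c}as identity, then duality/interpolation) is exactly the paper's structure, and steps 2--4 match the paper's proof essentially verbatim; in particular your patching argument for the Neumann trace is precisely the content of \cite[Lemma 2.3]{BaSpWu:16} that the paper cites, and your duality/interpolation via self-adjointness and \eqref{eq:adj2} is the paper's argument.

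The one genuine difference is the lifting in step 1. You describe a boundary-flattening, tangential-Fourier construction and attribute it to \cite[\S3]{BaSpWu:16}, but that is not what \cite{BaSpWu:16} does (nor what the paper does here). The actual lifting is far simpler: take $w\in H^1(\Omega_+)$ solving the \emph{damped} problem $\Delta w + (k^2+\ri k)w=0$ in $\Omega_+$ with $\gamma_+w=g$, then set $v:=\psi w$ for a fixed cut-off $\psi\in C_0^\infty$ equal to one on $\overline{\Omega_-}$. The damping makes the sesquilinear form coercive on $H^1(\Omega_+)$, so $w$ exists for any Lipschitz $\Gamma$, and a single Green's-identity computation (\cite[Lemma 3.3]{BaSpWu:16}) gives $\|w\|_{H^1_k(\Omega_+)}\lesssim\|g\|_{H^1_k(\Gamma)}$; then $(\Delta+k^2)(\psi w)=-\ri k\psi w+w\Delta\psi+2\nabla w\cdot\nabla\psi$ is trivially bounded in $L^2$ by $\|w\|_{H^1_k}$. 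This sidesteps entirely the issues you flag as the ``main obstacle'': there is no boundary flattening, no curvature or commutator bookkeeping, and nothing Lipschitz-sensitive. Your Fourier-type construction may well be made to work, but the sketch as written does not justify the claimed residual bound for Lipschitz $\Gamma$, and in any case it is a harder route to the same estimate.
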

\begin{proof}
We sketch the proof, which is essentially contained in \cite[\S3]{BaSpWu:16}. Suppose that $\Omega_+$ satisfies a $K$ resolvent estimate, with the given conditions on $K$, and that $u\in H^1_{\mathrm{loc}}(\Omega_+)$ satisfies \eqref{eq:src}, $\Delta u+k^2 u = 0$ in $\Omega_+$, and  $g:= \gamma_+u\in H^1(\Gamma)$. Let $w\in H^1(\Omega_+)$ satisfy $\Delta w + (k^2 + \ri k) w = 0$ in $\Omega_+$ and the boundary condition $\gamma_+ w = g$. Green's identity can then be used to show that, given $k_0>0$,
\begin{equation} \label{eq:w}
\|w\|_{H_k^1(\Omega_+)} \lesssim \|g\|_{H^1_k(\Gamma)}, \quad \mbox{for } k\geq k_0
\end{equation}
 \cite[Lemma 3.3]{BaSpWu:16}.
Choose $\psi\in C_0^\infty(\R^d)$ that is equal to one on $\Omega_-$, and define $v:= u-\psi w$. Then $v\in H^1_{\mathrm{loc}}(\Omega_+)$ satisfies \eqref{eq:src}, $\gamma_+v=0$, and $\Delta v + k^2 v = h:= \ri k \psi w - w\Delta \psi - 2\nabla w \cdot \nabla \psi\in L^2(\Gamma)$, and $h$ is compactly supported. Thus, for all
$R> R_\Gamma$,
\begin{equation}\label{eq:resolgen2}
\|v\|_{H^1_k(\Omega_R)}\lesssim K(k)\|h\|_{L^2(\Omega_+)} \lesssim K(k) \|w\|_{H_k^1(\Omega_+)},
\end{equation}
for $k\geq k_0$. Combining \eqref{eq:w} and \eqref{eq:resolgen2} we see that, for all $R>R_\Gamma$,
$$
\|u\|_{H_k^1(\Omega_R)} \lesssim K(k)\|g\|_{H^1_k(\Gamma)}, \quad \mbox{for } k\geq k_0.
$$
That $\|\partial_n^+ u\|_{L^2(\Gamma)}$ is also bounded by the right hand side of this last equation follows from \cite[Lemma 2.3]{BaSpWu:16} (essentially Ne\v{c}as' regularity result \cite[\S5.1.2]{Ne:67}, \cite[Theorem 4.24(ii)]{Mc:00}, proved using a Rellich identity).
Using the notation $\DtN:H^{1/2}(\Gamma)\to H^{-1/2}(\Gamma)$ to denote the DtN map for the exterior domain $\Omega_+$, we see equation \eqref{eq:dtn2l} implies that
$$
\|\DtN\|_{H^1_k(\Gamma)\to L^2(\Gamma)} \lesssim K(k) \quad \mbox{so} \quad  \|\DtN\|_{H^1(\Gamma)\to L^2(\Gamma)} \lesssim k K(k),
$$
for $k\geq k_0$. It is well known (e.g., \cite[Theorem 2.31]{ChGrLaSp:12}) that $\DtN$ can be extended uniquely to a bounded mapping from $H^{s+1/2}(\Gamma) \rightarrow H^{s-1/2}(\Gamma)$ for $|s|\leq 1/2$.
Since $\DtN$ is self-adjoint with respect to the real inner product $(\cdot,\cdot)_\Gamma^r$ (see \cite[Section 2.7]{ChGrLaSp:12}), \eqref{eq:dtn3l} follows from \eqref{eq:adj2} (cf.\ \cite[Lemma 2.3]{Sp:14}).
\end{proof}

\bre[Previous uses of the arguments in Lemma \ref{lem:recipe}]
The method in Lemma \ref{lem:recipe} is a sharpening of arguments used to obtain bounds on the DtN map from resolvent estimates in \cite{LaVa:11,Sp:14}, with this type of argument going back at least to \cite[\S5]{LaPh:72}. Indeed, in \cite{LaVa:11,Sp:14} the equation $\Delta w + (k^2 + \ri k) w = 0$ in the proof of Lemma \ref{lem:recipe} below is replaced by $\Delta w -k^2w = 0$, losing a factor $k$ in the final estimates.
\ere

\vspace{1ex}

To prove the last part of Theorem \ref{thm:dtn}, namely the bound \eqref{eq:dtn4}, we use the interior elliptic regularity estimate that if, for some $x\in \R^d$ and $\varepsilon>0$, $v\in C^2(B_\epsilon(x))$, $\Delta v = f$ in $B_\varepsilon(x)$ and $v,f \in L^\infty(B_\varepsilon(x))$, then \cite[Theorem 3.9]{GilTru}, for some constant $C_d>0$ that depends only on $d$,
$$
|\nabla v(x)| \leq \frac{C_d}{\varepsilon}\left(\|v\|_{L^\infty(B_\varepsilon)} + \varepsilon^2\|f\|_{L^\infty(B_d(x))}\right).
$$
In the particular case that $f=-k^2v$, so that $\Delta v + k^2v=0$, this estimate is
\begin{equation} \label{eq:intreg}
|\nabla v(x)| \leq C_d\,\frac{(1+k^2\varepsilon^2)}{\varepsilon}\,\|v\|_{L^\infty(B_\varepsilon(x))}.
\end{equation}

\begin{proof}[Proof of Theorem \ref{thm:dtn}] The bounds \eqref{eq:dtn2} and \eqref{eq:dtn3} follow immediately from Lemma \ref{lem:recipe} and Theorem \ref{thm:resol}, which shows (under the conditions on $\Omega_+$, $R_1$, and $R_0$, and taking into account the text after Definition \ref{def:resol})
that $\Omega_+$ satisfies a $K$ resolvent estimate with $K(k) = 1+k^2$. The bound \eqref{eq:dtn2} and \eqref{eq:intreg} imply a version of \eqref{eq:dtn4}, but with $k^2$ replaced by $k^3$.
To show the sharper bound \eqref{eq:dtn4}, choose $\psi\in C^\infty_0(\R^d)$ supported in $G$  that is equal to one on $\Omega_-\cup B_{R^\prime}$, for some $R^\prime>R_0$, and let $w := u+ \psi u^i$. Then $w$ satisfies \eqref{eq:src}, $\gamma_+w = 0$, and $\Delta w + k^2 w = h:= 2\nabla u^i\cdot \nabla \psi + u^i\Delta \psi$ in $\Omega_+$. Further, $h\in$ $L^2(\Oe)$ is compactly supported, $h=0$ in $B_{R^\prime}\cap \Omega_+$, and, applying \eqref{eq:intreg} with $\varepsilon=\min(\epsilon, k^{-1})$ for some sufficiently small $\epsilon$, we see that
$$
\|h\|_{L^2(\Omega_+)} \lesssim (1+k) \max_{x\in G}|u^i(x)|, \quad \mbox{for } k>0.
$$
It follows from Theorem \ref{thm:resol} (see \eqref{eq:resol3}) that, given $k_0>0$ and $R>R_\Gamma$,
$$
\|w\|_{H^1_k(\Omega_R)}  \lesssim k\|h\|_{L^2(\Omega_+)},
$$
for $k\geq k_0$. Applying \cite[Lemma 2.3]{BaSpWu:16} we deduce that also $\|\partial_n^+ w\|_{L^2(\Gamma)}\lesssim k\|h\|_{L^2(\Omega_+)}$. Applying \eqref{eq:intreg} again, with the same choice of $\varepsilon$, we obtain also that
$$
\|\psi u^i\|_{H^1_k(\Omega_+)} + \|\partial_n^+u^i\|_{L^2(\Omega_+)} \lesssim (1+k) \max_{x\in G}|u^i(x)|, \quad \mbox{for } k>0.
$$
Combining these inequalities it follows that \eqref{eq:dtn4} holds.
\end{proof}

\

Using Lemma \ref{lem:recipe} we can derive other bounds on the exterior DtN map that apply to classes of trapping domains, using the two other trapping resolvent estimates in the literature, which we discussed in \S\ref{sec:4star}.

\begin{corollary}[Worst case bounds on the DtN map] \label{cor:dtnws} Let $u\in H^1_{\mathrm{loc}}(\Omega_+)$ be a solution to the Helmholtz equation $\Delta u + k^2 u = 0$ in $\Omega_+$ that satisfies \eqref{eq:src} and $\gamma_+u=g$. If $\Omega_-$ is $C^\infty$ there exists $\alpha>0$ such that, given $k_0>0$,
\begin{equation}\label{eq:dtnwc2}
\|\partial_n^+ u\|_{L^2(\Gamma)} \lesssim \exp(\alpha k) \,\|g\|_{H^1_k(\Gamma)},
\end{equation}
for all $k\geq k_0$ if $g\in H^1(\Gamma)$. In fact, for $k\geq k_0$,
\begin{equation}\label{eq:dtnwc3}
\|\partial_n^+ u\|_{H_k^{s-1}(\Gamma)} \lesssim \exp(\alpha k) \|g\|_{H_k^{s}(\Gamma)} \quad \mbox{and} \quad \|\partial_n^+ u\|_{H^{s-1}(\Gamma)} \lesssim k\exp(\alpha k) \|g\|_{H^{s}(\Gamma)},
\end{equation}
uniformly for $0\leq s\leq 1$, assuming, in the case $s>1/2$, that $g\in H^s(\Gamma)$.
\end{corollary}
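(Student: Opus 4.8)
The plan is to deduce Corollary~\ref{cor:dtnws} from Lemma~\ref{lem:recipe} in exactly the way Theorem~\ref{thm:dtn} was deduced from Theorem~\ref{thm:resol}: the only substantive task is to verify that, when $\Omega_-$ is $C^\infty$, the exterior domain $\Omega_+$ satisfies a $K$ resolvent estimate in the sense of Definition~\ref{def:resol} with $K(k)=\exp(\alpha k)$ for a suitable $\alpha>0$. Once this is in hand, \eqref{eq:dtnwc2} is the specialisation of \eqref{eq:dtn2l}, and \eqref{eq:dtnwc3} the specialisation of \eqref{eq:dtn3l}, with $K(k)=\exp(\alpha k)$ (and $kK(k)=k\exp(\alpha k)$) inserted; the self-adjointness of $\DtN$ with respect to the real inner product that underlies the interpolation step in \eqref{eq:dtn3l} is as in the proof of Lemma~\ref{lem:recipe} and needs nothing new.

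To obtain the $K$ resolvent estimate, fix $R>\max_{x\in\Gamma\cup\supp(f)}|x|$ and let $u$ be the outgoing solution of $\Delta u+k^2u=-f$ with $\gamma_+u=0$, so $u=R(k)f$ with $f$ compactly supported. Choose cut-offs $\chi_1,\chi_2\in C^\infty_{\mathrm{comp}}(\overline{\Omega_+})$ with $\chi_2\equiv 1$ on $\supp(f)$ and $\chi_1\equiv 1$ on $\overline{\Omega_R}$; then $\chi_1 u=\chi_1R(k)\chi_2 f$, so Burq's bound \eqref{eq:Burq} gives
\[
\|u\|_{L^2(\Omega_R)}\le \|\chi_1R(k)\chi_2\|_{L^2(\Omega_+)\to L^2(\Omega_+)}\,\|f\|_{L^2(\Omega_+)}\lesssim \exp(\alpha k)\,\|f\|_{L^2(\Omega_+)},
\]
for $k\ge k_0$, with implied constant depending on $R$ (through the cut-offs) but not on $k$ or $f$. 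To upgrade this to an $H^1_k(\Omega_R)$ bound, apply the Green's-identity inequality \eqref{eq:Green_ineq} of Lemma~\ref{lem:Green} (valid since $R>R_\Gamma$ and $\supp(f)\subset B_R$), which together with Cauchy--Schwarz and Young's inequality yields, for $k\ge k_0$,
\[
\|u\|^2_{H^1_k(\Omega_R)}=\int_{\Omega_R}\big(|\nabla u|^2+k^2|u|^2\big)\,\rd x\lesssim k^2\|u\|^2_{L^2(\Omega_R)}+\|f\|^2_{L^2(\Omega_+)}.
\]
Combining the last two displays gives $\|u\|_{H^1_k(\Omega_R)}\lesssim k\exp(\alpha k)\|f\|_{L^2(\Omega_+)}$, and since $k\lesssim\exp(k)$ this is $\lesssim\exp(\alpha'k)\|f\|_{L^2(\Omega_+)}$ for any $\alpha'>\alpha$. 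By the discussion immediately after Definition~\ref{def:resol}, the estimate \eqref{eq:resolgen} then automatically extends to $0<k<k_0$, so $\Omega_+$ satisfies a $K$ resolvent estimate with $K(k)=\exp(\alpha'k)$. Renaming $\alpha'$ as $\alpha$ and applying Lemma~\ref{lem:recipe} produces \eqref{eq:dtnwc2} and \eqref{eq:dtnwc3}.

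I do not expect a serious obstacle here: the argument is a routine transcription of the proof of Theorem~\ref{thm:dtn} with Burq's exponential bound in place of the polynomial resolvent estimate of Theorem~\ref{thm:resol}. The one point that warrants a little care is the passage from the $L^2\to L^2$ cut-off resolvent bound \eqref{eq:Burq} supplied by the literature to the $L^2\to H^1_k$ resolvent estimate, uniformly in $R$, required by Definition~\ref{def:resol}; this is handled, as above, by choosing the cut-offs in \eqref{eq:Burq} adapted to the given $R$ and by absorbing the harmless factor of $k$ coming from Lemma~\ref{lem:Green} into the exponential, so that the constant $\alpha$ appearing in the statement is Burq's constant, enlarged by an arbitrarily small amount.
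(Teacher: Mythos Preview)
Your proposal is correct and matches the paper's approach: the paper states (just before Corollary~\ref{cor:dtnws}) that this result follows by applying Lemma~\ref{lem:recipe} with Burq's exponential resolvent estimate \eqref{eq:Burq}, and gives no further proof. Your write-up is in fact more careful than the paper in one respect: you make explicit the upgrade from the $L^2\to L^2$ cut-off resolvent bound of \eqref{eq:Burq} to the $L^2\to H^1_k$ estimate required by Definition~\ref{def:resol}, via Lemma~\ref{lem:Green}, at the harmless cost of enlarging $\alpha$.
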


The second resolvent estimate, developed by Ikawa \cite{Ik:83,Ik:88} and Burq \cite{Bu:04}, is for mild, hyperbolic trapping, where $\Omega_-$ is an {\em Ikawa-like union of convex obstacles} in the following sense.

\begin{definition}[Ikawa-like union of convex obstacles \cite{Ik:88,Bu:04}] \label{def:IB} We say that $\Omega_-$ is an {\em Ikawa-like union of convex obstacles} if:
\begin{itemize}
\item[(i)] for some $M\in \mathbb{N}$, $\overline{\Omega_-} = \displaystyle{\bigcup_{i=1}^N}\Theta_i$, where $\Theta_1,...,\Theta_N\subset \R^d$ are disjoint compact $C^\infty$ strictly convex sets with $\kappa>0$, where $\kappa$ is the infimum of the principal curvatures of the boundaries of the obstacles $\Theta_i$;
\item[(ii)] for $1\leq i,j,\ell \leq N$, $i\neq j$, $j\neq \ell$, $\ell\neq i$,
$$
\mathrm{Convex}\; \mathrm{hull}(\Theta_i\cup\Theta_j)\cap \Theta_\ell = \emptyset;
$$
\item[(iii)]  if $N>2$, $\kappa L > N$, where $L$ denotes the minimum of the distances between pairs of obstacles.
\end{itemize}
\end{definition}

\begin{corollary}[DtN map for Ikawa-like union of convex obstacles] \label{cor:dtnib} Let $u\in H^1_{\mathrm{loc}}(\Omega_+)$ be a solution to the Helmholtz equation $\Delta u + k^2 u = 0$ in $\Omega_+$ that satisfies \eqref{eq:src} and $\gamma_+u=g$. If $\Omega_-$ is an  Ikawa-like union of convex obstacles then, given $k_0>0$,
\begin{equation}\label{eq:dtnib2}
\|\partial_n^+ u\|_{L^2(\Gamma)} \lesssim \log(2+k) \,\|g\|_{H^1_k(\Gamma)},
\end{equation}
for all $k\geq k_0$ if $g\in H^1(\Gamma)$. In fact, for $k\geq k_0$,
\begin{equation}\label{eq:dtnik}
\|\partial_n^+ u\|_{H_k^{s-1}(\Gamma)} \lesssim \log(2+k) \|g\|_{H_k^{s}(\Gamma)} \quad \mbox{and} \quad \|\partial_n^+ u\|_{H^{s-1}(\Gamma)} \lesssim k\log(2+k) \|g\|_{H^{s}(\Gamma)},
\end{equation}
uniformly for $0\leq s\leq 1$, assuming, in the case $s>1/2$, that $g\in H^s(\Gamma)$.
\end{corollary}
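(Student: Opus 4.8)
The plan is to obtain both \eqref{eq:dtnib2} and \eqref{eq:dtnik} as a direct application of Lemma~\ref{lem:recipe}, whose only hypothesis is that $\Omega_+$ satisfies a $K$ resolvent estimate in the sense of Definition~\ref{def:resol}. So the real content of the proof is the verification that an Ikawa-like union of convex obstacles (Definition~\ref{def:IB}) satisfies such an estimate with $K(k):=1+\log(2+k)$, which is indeed in $C[0,\infty)$ with $K(k)\geq 1$ for $k>0$; the hyperbolic resolvent bound is then used here exactly as Theorem~\ref{thm:resol} was used in the $(R_0,R_1)$-obstacle case.

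First I would invoke the hyperbolic-trapping resolvent estimate \eqref{eq:Ikawa2} of Burq \cite[Proposition 4.4]{Bu:04} (building on Ikawa \cite{Ik:88}), which applies precisely under the conditions collected in Definition~\ref{def:IB} and states that, given $k_0>0$, the outgoing cut-off resolvent satisfies $\N{\cutoff}_{\LtLt}\lesssim \log(2+k)/k$ for $k\geq k_0$. To convert this into the form \eqref{eq:resolgen}, fix $R>\max_{x\in\Gamma\cup\supp f}|x|$ and choose $\chi_2\in C^\infty_{\mathrm{comp}}(\overline{\Omega_+})$ equal to one on $\supp f$ and $\chi_1\in C^\infty_{\mathrm{comp}}(\overline{\Omega_+})$ equal to one on $\overline{\Omega_R}$; then $u|_{\Omega_R}=(\cutoff f)|_{\Omega_R}$, so $\|u\|_{L^2(\Omega_R)}\lesssim (\log(2+k)/k)\|f\|_{L^2(\Omega_+)}$ for $k\geq k_0$, with the omitted constant depending only on $R$ through this fixed choice of cut-offs. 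Lemma~\ref{lem:Green} then gives $\|\nabla u\|_{L^2(\Omega_R)}^2\leq k^2\|u\|_{L^2(\Omega_R)}^2+\|f\|_{L^2(\Omega_+)}\|u\|_{L^2(\Omega_R)}$, so that $\|u\|_{H^1_k(\Omega_R)}\lesssim k\|u\|_{L^2(\Omega_R)}+\|f\|_{L^2(\Omega_+)}\lesssim (1+\log(2+k))\|f\|_{L^2(\Omega_+)}$ for $k\geq k_0$. By the remarks following Definition~\ref{def:resol} this extends to all $k>0$, so $\Omega_+$ satisfies a $K$ resolvent estimate with $K(k)=1+\log(2+k)$.

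With this in hand, Lemma~\ref{lem:recipe} applied with $K(k)=1+\log(2+k)$ yields at once \eqref{eq:dtn2l} and \eqref{eq:dtn3l}, which are precisely \eqref{eq:dtnib2} and \eqref{eq:dtnik} once one absorbs the additive $1$ into $\log(2+k)$; this is legitimate for $k\geq k_0$ since then $\log(2+k)\geq\log(2+k_0)>0$. The caveat ``assuming, in the case $s>1/2$, that $g\in H^s(\Gamma)$'' is inherited verbatim from Lemma~\ref{lem:recipe}, and the Neumann trace $\partial_n^+u\in L^2(\Gamma)$ makes sense because $\Omega_-$ is $C^\infty$, so Ne\v{c}as' regularity estimate applies via \cite[Lemma 2.3]{BaSpWu:16}. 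I do not anticipate any genuine obstacle: the only point requiring care is the bookkeeping in the second paragraph, namely passing from the cut-off form \eqref{eq:Ikawa2}, whose implied constant depends on the cut-off functions, to the constant in \eqref{eq:resolgen} that depends only on $R$, and correctly trading the factor $k^{-1}$ lost in the $L^2$ bound for control of the full $H^1_k$ norm through Lemma~\ref{lem:Green}.
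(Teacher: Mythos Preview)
Your proposal is correct and follows exactly the approach the paper intends: apply Lemma~\ref{lem:recipe} with $K(k)=1+\log(2+k)$, having first checked that the Ikawa--Burq cut-off bound \eqref{eq:Ikawa2} yields a $K$ resolvent estimate in the sense of Definition~\ref{def:resol}. The paper leaves this corollary without an explicit proof, simply pointing to Lemma~\ref{lem:recipe} and the resolvent estimate \eqref{eq:Ikawa2}; your second paragraph correctly fills in the routine but necessary step of upgrading the $L^2\to L^2$ cut-off bound to the $H^1_k(\Omega_R)$ form via Lemma~\ref{lem:Green}.
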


\section{Proof of Corollary \ref{cor:infsup} on the inf-sup constant} \label{sec:infsup}

Since $\Omega_+$ is unbounded, standard FEMs cannot be applied directly to the exterior Dirichlet problem. A standard fix is to reformulate the exterior Dirichlet problem as a variational problem in the truncated domain $\Omega_R$, for some $R>R_\Gamma$. The effect of the rest of $\Omega_+$, i.e.\ of $\Omega_R^+:= \Omega_+\setminus{\overline{\Omega_R}}$, is replaced by the exact DtN map on $\Gamma_R$ for $\Omega^+_R$, abbreviated as $P^+_R$ (our notation as in Corollary \ref{cor:infsup}). As $\Omega_R^+$ is a geometry in which the Helmholtz equation separates, the action of $P^+_R$ can be computed analytically (e.g.\ \cite[Equations (3.5)--(3.6)]{ChMo:08}).

Given $f\in L^2(\Omega_+)$ with compact support in $\overline{\Omega_+}$, consider the problem of finding $u\in H^1_{\mathrm{loc}}(\Omega_+)$ such that $u$ satisfies the radiation condition \eqref{eq:src}, the Helmholtz equation $\Delta u + k^ 2 u = -f$ in $\Omega_+$, and $\gamma_+u=0$ on $\Gamma$.
It is well-known that a variational formulation in $\Omega_R$ can be obtained by multiplying the Helmholtz equation by a test function $v_R\in V_R$, integrating by parts, and applying the boundary condition $\gamma_+u=0$. In particular (e.g., \cite{Ne:01}), if the support of $f$ lies in $\Omega_R$, $u$ satisfies this BVP in $\Omega_+$ if and only if $u_R:= u|_{\Omega_R}\in V_R$ and
\begin{align} \label{eq:weakst}
a(u_R,v_R) = G(v_R), \quad \mbox{for all } v_R\in V_R,
\end{align}
where $a(\cdot,\cdot)$ is defined in \eqref{eq:ses}, $V_R$ is defined immediately before Corollary \ref{cor:infsup}, and
\begin{align} \label{eq:Gdef}
G(v) := \int_{\Omega_R} \bar v f \rd x, \quad \mbox{for } v\in V_R.
\end{align}

The following lemma is proved as \cite[Lemmas 3.3, 3.4]{ChMo:08}.

\begin{lemma}[Link between resolvent estimates and bounds on the inf-sup constant]\label{lem:CWMo} Suppose that $R>R_\Gamma$, $L>0$, $k>0$, and that
\beq \label{eq:rb}
\|u\|_{H^1_k(\Omega_R)} \leq L\|f\|_{L^2(\Omega_+)},
 \eeq
 for all $f\in L^2(\Omega_+)$ supported in $\Omega_R$, where $u\in H^1_{\mathrm{loc}}(\Omega_+)$ is the solution of $\Delta u + k^ 2 u = -f$ in $\Omega_+$ that satisfies \eqref{eq:src} and $\gamma_+u=0$. Then
\begin{equation} \label{eq:infsupg}
\beta_R:= \inf_{0\neq u\in V_R}\, \sup_{0\neq v\in V_R} \frac{|a(u,v)|}{\|u\|_{H^1_k(\Omega_R)}\|v\|_{H^1_k(\Omega_R)}} \geq \alpha,
\end{equation}
where $\alpha = (1+2kL)^{-1}$. Conversely, if \eqref{eq:infsupg} holds for some $\alpha>0$, then \eqref{eq:rb} holds for all $f\in L^2(\Omega_+)$ supported in $\Omega_R$, with $L=\alpha^{-1}\min(k^{-1},c_R)$, where
\beq \label{eq:poin}
c_R := \sup_{0\neq v\in V_R} \frac{\|v\|_{L^2(\Omega_R)}}{\|\nabla v\|_{L^2(\Omega_R)}}.
\eeq
\end{lemma}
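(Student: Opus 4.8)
\noindent\textbf{Proof plan for Lemma \ref{lem:CWMo}.}
The plan is to prove the two implications separately, each time using that the exterior Dirichlet problem is uniquely solvable at every fixed $k>0$ (classical) together with the sign identity
$\Re a(u,u) \geq \|\nabla u\|^2_{L^2(\Omega_R)} - k^2\|u\|^2_{L^2(\Omega_R)}$ for $u\in V_R$, which follows from the definition \eqref{eq:ses} of $a$ and the non-negativity property $-\Re\int_{\Gamma_R}\overline{\phi}\,P_R^+\phi\,\rd s\geq 0$ of the exterior DtN map $P_R^+$; this last fact follows either from the explicit series representation of $P_R^+$, or by applying Lemma \ref{lem:2.2} to the radiating solution in $\Omega_+\setminus \overline{B_R}$ with Dirichlet data $\phi$ on $\Gamma_R$ (for which $P_R^+\phi = \partial_r u|_{\Gamma_R}$).

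First I would show that \eqref{eq:rb} implies \eqref{eq:infsupg}. Given $0\neq u\in V_R$, the idea is to test with $v:=u+\zeta$, where $\zeta\in V_R$ is the solution of the auxiliary variational problem $a(w,\zeta) = 2k^2\,(w,u)_{L^2(\Omega_R)}$ for all $w\in V_R$ (which exists by well-posedness at fixed $k$). Testing first with $w\in C^\infty_0(\Omega_R)$ and then with general $w\in V_R$, and using the symmetry of $P_R^+$ on $\Gamma_R$ with respect to the bilinear pairing $(\phi,\psi)\mapsto\int_{\Gamma_R}\phi\psi\,\rd s$, one identifies $\overline{\zeta}$ as the restriction to $\Omega_R$ of the radiating solution of $\Delta\overline{\zeta} + k^2\overline{\zeta} = -2k^2\overline{u}$ in $\Omega_+$ with $\gamma_+\overline{\zeta}=0$, the datum $2k^2\overline{u}$ (extended by zero) being supported in $\Omega_R$. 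Hence the resolvent estimate \eqref{eq:rb} applies to $\overline{\zeta}$ and gives $\|\zeta\|_{H^1_k(\Omega_R)}=\|\overline{\zeta}\|_{H^1_k(\Omega_R)}\leq 2k^2 L\,\|u\|_{L^2(\Omega_R)}\leq 2kL\,\|u\|_{H^1_k(\Omega_R)}$. By construction $a(u,\zeta)=2k^2\|u\|^2_{L^2(\Omega_R)}$, so by the sign identity above $|a(u,v)|\geq \Re a(u,v)=\Re a(u,u)+2k^2\|u\|^2_{L^2(\Omega_R)}\geq \|\nabla u\|^2_{L^2(\Omega_R)}+k^2\|u\|^2_{L^2(\Omega_R)}=\|u\|^2_{H^1_k(\Omega_R)}$, while $\|v\|_{H^1_k(\Omega_R)}\leq(1+2kL)\|u\|_{H^1_k(\Omega_R)}$; combining these yields $\beta_R\geq(1+2kL)^{-1}=\alpha$.

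For the converse, suppose \eqref{eq:infsupg} holds with constant $\alpha$. Given $f\in L^2(\Omega_+)$ supported in $\Omega_R$, let $u\in H^1_{\mathrm{loc}}(\Omega_+)$ be the (unique) solution of $\Delta u+k^2 u=-f$ satisfying \eqref{eq:src} and $\gamma_+u=0$, so that $u_R:=u|_{\Omega_R}\in V_R$ solves \eqref{eq:weakst} with $G$ as in \eqref{eq:Gdef}. The inf-sup bound then gives $\|u_R\|_{H^1_k(\Omega_R)}\leq \alpha^{-1}\sup_{0\neq v\in V_R}|G(v)|/\|v\|_{H^1_k(\Omega_R)}$, and since $|G(v)|\leq\|v\|_{L^2(\Omega_R)}\|f\|_{L^2(\Omega_R)}$ with $\|v\|_{L^2(\Omega_R)}\leq\min(k^{-1},c_R)\|v\|_{H^1_k(\Omega_R)}$ — the bound $k^{-1}$ being immediate from the definition of $\|\cdot\|_{H^1_k(\Omega_R)}$ and the bound $c_R$ coming from \eqref{eq:poin} together with $\|\nabla v\|_{L^2(\Omega_R)}\leq\|v\|_{H^1_k(\Omega_R)}$ — this gives \eqref{eq:rb} with $L=\alpha^{-1}\min(k^{-1},c_R)$.

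The main obstacle is the construction and identification of the auxiliary function $\zeta$ in the first implication: one has to check carefully that the variational problem defining $\zeta$ is equivalent to a genuine exterior Helmholtz boundary-value problem carrying the \emph{incoming} radiation condition (so that $\overline{\zeta}$ is outgoing and \eqref{eq:rb} may legitimately be invoked), and this is precisely where the form of the boundary term in \eqref{eq:ses} and the bilinear symmetry of $P_R^+$ on $\Gamma_R$ enter. The remaining ingredients — the sign identity for $a$ and the DtN non-negativity, the bound $\|v\|_{H^1_k(\Omega_R)}\leq(1+2kL)\|u\|_{H^1_k(\Omega_R)}$ (using $k\|u\|_{L^2(\Omega_R)}\leq\|u\|_{H^1_k(\Omega_R)}$), and the elementary estimates on $G$ for the converse — are routine.
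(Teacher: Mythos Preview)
Your proof is correct and follows essentially the same argument as the one in \cite[Lemmas 3.3, 3.4]{ChMo:08}, which is what the paper cites for this lemma: the forward direction is obtained by testing with $v=u+\zeta$ where $\overline{\zeta}$ is the outgoing solution with source $2k^2\overline{u}$, using the bilinear symmetry of $P_R^+$ to pass between the adjoint variational problem and the primal one, together with the DtN sign property $-\Re\int_{\Gamma_R}\overline{\phi}\,P_R^+\phi\,\rd s\geq 0$ to get the G{\aa}rding-type lower bound on $\Re a(u,u)$; and the converse direction is the straightforward duality estimate on $G$. Your identification of the main subtlety (that the adjoint variational problem for $\zeta$ corresponds to the incoming radiation condition, so that $\overline{\zeta}$ is outgoing and \eqref{eq:rb} applies) is exactly the point that needs care, and you have handled it correctly.
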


Corollary \ref{cor:infsup} follows immediately from Theorem \ref{thm:resol} and Lemma \ref{lem:CWMo}.

We remark also that (see \cite{Ne:01} or \cite[Lemma 2.1]{ChMo:08})
\begin{equation} \label{eq:infsup2}
\beta_R \geq \inf_{0\neq v\in V_R}\, \frac{\Re(a(u,v))}{\|v\|^2_{H^1_k(\Omega_R)}} \geq \inf_{0\neq v\in V_R}\,\frac{\int_{\Omega_R}(|\nabla v|^2-k^2|v|^2)\rd x}{\int_{\Omega_R}(|\nabla v|^2+k^2|v|^2)\rd x} \geq \frac{1-k^2c_R^2}{1+k^2c_R^2}.
\end{equation}
This, combined with Lemma \ref{lem:CWMo}, shows that, if $kc_R<1$, \eqref{eq:rb} holds for all $f\in L^2(\Omega_+)$ supported in $\Omega_R$, with
\beq \label{eq:L}
L=c_R\,\frac{1+k^2c_R^2}{1-k^2c_R^2}.
\eeq

\begin{remark}[Bound on $\beta_R^{-1}$ from a $K$ resolvent estimate]\label{rem:resinfsup}
In the language of Definition \ref{def:resol}, Lemma \ref{lem:CWMo} tells us that $\Omega_+$ satisfies a $K$ resolvent estimate (with $K$ satisfying the conditions of Definition \ref{def:resol}) if and only if the inf-sup constant satisfies
\beq \label{eq:infsupgen}
\beta_R^{-1} \lesssim (1+k)K(k), \quad \mbox{for } k>0,
\eeq
for all $R>R_\Gamma$. Table \ref{tab:Ainv} lists the known resolvent estimates for scattering by an obstacle, as well as the bounds $\beta_R^{-1}$ that follows from these.
\end{remark}

\bre[Upper bound on $\beta_R$]
The simple constructions in \cite[Lemma 3.10]{ChMo:08} (see also \cite[Lemma 4.12]{Sp:14}) show that for every $\Omega_+$ and every $R>R_\Gamma$,
\beq \label{eq:infsupub3}
\beta_R \lesssim (1+k)^{-1}, \quad \mbox{for } k>0;
\eeq
and the nontrapping resolvent estimate combined with \eqref{eq:infsupgen} shows that this is sharp.
\ere

\section{Combined-potential integral equation formulations and the proof of Corollary \ref{cor:CFIE}} \label{sec:ie}

Integral equation methods are widely used for both the theoretical analysis and the numerical solution of direct and inverse acoustic scattering problems (e.g., \cite{CoKr:83,CoKr:98,ChGrLaSp:12}). In this section we recall the standard integral equation formulations for the exterior Dirichlet problem, and derive new wavenumber-explicit bounds in the case when $\Omega_-$ is trapping, combining the resolvent and DtN estimates in Theorems \ref{thm:resol} and \ref{thm:dtn} (proved in Sections \ref{sec:resol} and \ref{sec:dtn}) with the sharp bounds for the interior impedance problem recently obtained in \cite{BaSpWu:16}.

\subsection{Integral equations for the exterior Dirichlet problem}
If $u$ is a solution of $\Delta u + k^2 u = 0$ in $\Oe$ that satisfies the radiation condition \eqref{eq:src}
then Green's representation theorem (see, e.g., \cite[Theorem 2.21]{ChGrLaSp:12}) gives
\beq\label{eq:Green}
u(x) = - \int_\Gamma \Phi_k(x,y) \dnpu(y) \, \rd s(y) + \int_\Gamma \pdiff{\Phi_k(x,y)}{n(y)} \gamma_+ u(y) \, \rd s(y), \quad x\in \Oe,
\eeq
where
$\Phi_k(x,y)$ is the fundamental solution of the Helmholtz equation given by
\beq\label{eq:fund}
\Phi_k(x,y):= \frac{\ri}{4}\left(\frac{k}{2\pi |x-y|}\right)^{(d-2)/2}H_{(d-2)/2}^{(1)}\big(k|x-y|\big)= \left\{\begin{array}{cc}
                                                                                                            \displaystyle{\frac{\ri}{4}H_0^{(1)}\big(k|x-y|\big)}, & d=2, \\
                                                                                                            \displaystyle{\frac{\re^{\ri k |x-y|}}{4\pi |x-y|}}, & d=3,
                                                                                                          \end{array}\right.
\eeq
where $H^{(1)}_\nu$ denotes the Hankel function of the first kind of order $\nu$.
Taking the exterior Dirichlet and Neumann traces of \eqref{eq:Green} on $\Gamma$ and using the jump relations for the single- and double-layer potentials (e.g.\ \cite[Equation 2.41]{ChGrLaSp:12}) we obtain the integral equations
\beq\label{eq:BIE1}
S_k \dnpu = \left( -\half I + D_k\right) \gamma_+ u \quad
\mbox{ and } \quad
\left( \half I + D_k'\right) \dnpu = H_k \gamma_+ u,
\eeq
where $S_k$, $D_k$ are the single- and double-layer operators, $D_k'$ is the adjoint double-layer operator, and $H_k$ is the hypersingular operator. These four integral operators are defined for $\phi\in\LtG$, $\psi \in \HoG$, and almost all $x\in\Gamma$ by
\begin{align}\label{eq:SD}
&S_k \phi(x) := \int_\Gamma \Phi_k(x,y) \phi(y)\,\rd s(y), \qquad
D_k \phi(x) := \int_\Gamma \frac{\partial \Phi_k(x,y)}{\partial n(y)}  \phi(y)\,\rd s(y),\\ \label{eq:DH}
&D'_k \phi(x) := \int_\Gamma \frac{\partial \Phi_k(x,y)}{\partial n(x)}  \phi(y)\,\rd s(y),\quad
H_k \psi(x) := \pdiff{}{n(x)} \int_\Gamma \pdiff{\Phi_k(x,y)}{n(y)} \psi(y)\, \rd s(y).
\end{align}
When $\Gamma$ is Lipschitz, the integrals defining $D_k$ and $D'_k$
must be understood as Cauchy principal value integrals and even when $\Gamma$ is smooth there are subtleties in defining $H_k\psi$ for $\psi\in H^1(\Gamma)$ which we ignore here (see, e.g., \cite[\S2.3]{ChGrLaSp:12}).

For the exterior Dirichlet problem, the integral equations
\eqref{eq:BIE1} are both equations for
the unknown Neumann trace $\dnpu$. However the first of these equations is not
uniquely solvable when $-k^2$ is a Dirichlet eigenvalue of the
Laplacian in $\Oi$, and the second is not uniquely solvable
when $-k^2$ is a Neumann eigenvalue of the Laplacian in $\Oi$; see, e.g., \cite[Theorem 2.25]{ChGrLaSp:12}.

 One standard way to resolve this difficulty (going back to the work of \cite{BuMi:71}) is to take a linear combination of the two equations, which yields the integral equation
\beq\label{eq:CFIE2}
\opA \dnpu = \opBM \gamma_+ u
\eeq
where
\beq\label{eq:CFIEdef}
\opA := \half I + D_k' - \ri \eta S_k \quad \mbox{and} \quad \opBM := H_k + \ri \eta \left(\half I - D_k\right).
\eeq
If $\eta \in \Rea\setminus\{0\}$
then the integral operator $\opA$ is invertible (on appropriate Sobolev spaces) and so \eqref{eq:CFIE2} can be used to solve the exterior Dirichlet problem for all  $k>0$.
Indeed, if $\eta\in \Rea\setminus\{0\}$ then $\opA$ is a bounded invertible operator from $H^s(\Gamma)$ to itself for $-1\leq s\leq 0$; \cite[Theorem 2.27]{ChGrLaSp:12}.

An alternative resolution (proposed essentially simultaneously by \cite{BrWe:65,Le:65,Pa:65}) is to work with a so-called {\em indirect} formulation, looking for a solution to the exterior Dirichlet problem as the {\em combined double- and single-layer potential}
\beqs
u(x) = \int_\Gamma \pdiff{\Phi_k(x,y)}{n(y)} \phi(y) \, \rd s(y) - \ri \eta \int_\Gamma \Phi_k(x,y) \phi(y) \, \rd s(y), \quad x\in \Oe,
\eeqs
for some $\phi\in H^{1/2}(\Gamma)$ and $\eta \in \R\setminus\{0\}$. It follows from the jump relations \cite[Equation 2.41]{ChGrLaSp:12} that this ansatz satisfies the exterior Dirichlet problem with Dirchlet data $h=\gamma_+u\in H^{1/2}(\Gamma)$ if and only if
\beq \label{eq:BW}
A_{k,\eta} \phi = h,
\eeq
where
\beq\label{eq:CFIEdef2}
A_{k,\eta} := \half I + D_k - \ri \eta S_k.
\eeq
 If $\eta\in \Rea\setminus\{0\}$ then $A_{k,\eta}$ is a bounded invertible operator from $H^s(\Gamma)$ to itself for $0\leq s\leq 1$; \cite[Theorem 2.27]{ChGrLaSp:12}. The operators $\opA$ and $A_{k,\eta}$ are closely related in that $\opA$ is the adjoint of $A_{k,\eta}$ with respect to the real $L^2$ inner product on $\Gamma$, i.e.\ $(A_{k,\eta}\phi,\psi)_\Gamma^r=(\phi,\opA\psi)_\Gamma^r$, for all $\phi, \psi\in L^2(\Gamma)$. Thus, by \eqref{eq:adj},
\begin{eqnarray} \label{eq:duals}
\|\opAinv\|_{H_k^{-s}(\Gamma)\to H_k^{-s}(\Gamma)} &=& \|A_{k,\eta}^{-1}\|_{H_k^{s}(\Gamma)\to H_k^s(\Gamma)} \quad \mbox{and}\\ \label{eq:duals2}
\|\opAinv\|_{H^{-s}(\Gamma)\to H^{-s}(\Gamma)} &=& \|A_{k,\eta}^{-1}\|_{H^{s}(\Gamma)\to H^s(\Gamma)} \quad \mbox{for }0\leq s\leq 1.
\end{eqnarray}

For the general exterior Dirichlet problem it is natural to pose Dirichlet data in $\HhG$ (since $\gamma_+ u \in \HhG$). The mapping properties of $H_k$ and $D_k$ (see \cite[Theorems 2.17, 2.18]{ChGrLaSp:12}) imply that $\opBM: H^{s+1}(\Gamma) \rightarrow H^s(\Gamma)$ for $-1\leq s\leq 0$, and thus $\opBM \gamma_+ u \in \HmhG$. Thus, for Dirichlet data in $H^{1/2}(\Gamma)$, the invertibility of $\opA$ on $H^{-1/2}(\Gamma)$ is particularly relevant and, for the solution of \eqref{eq:BW}, the invertibility of $A_{k,\eta}$ on $H^{1/2}(\Gamma)$. The major application of \eqref{eq:CFIE2}, however, is the solution of problems of sound soft acoustic scattering (see \cite[Definition 2.11, Theorem 2.46]{ChGrLaSp:12}), in which $u$ is interpreted as the {\em scattered field} corresponding to an {\em incident field} $u^i$ that satisfies $\Delta u^i + k^2 u^i = 0$ in some neighbourhood $G$ of $\overline{\Omega_-}$, and here the Dirichlet data $\gamma_+ u = -u^i|_\Gamma\in H^1(\Gamma)$ is smoother, so that $\opBM \gamma_+ u \in L^2(\Gamma)$. Indeed, in this case \cite[Theorem 2.46]{ChGrLaSp:12},
the unknown $\partial_n^+ u^t$ satisfies the integral equation,
\beq\label{eq:f}
\opA \partial_n^+ u^t = f_{k,\eta} := \dnpu^i - \ri \eta \gamma_+ u^i \in L^2(\Gamma).
\eeq
where $u^t:=u+u^i$ is the so-called {\em total field} satisfying $\gamma_+u^t=0$.
Therefore, in applications to acoustic scattering, the invertibility of $\opA$ on $L^2(\Gamma)$ is also important. Indeed, $L^2(\Gamma)$ is a natural function space setting for implementation and analysis of Galerkin numerical methods for the solution of the direct equations \eqref{eq:CFIE2} and \eqref{eq:f}, and the indirect equation \eqref{eq:BW} (e.g., \cite{LoMe:11,ChHeLaTw:12,GrLoMeSp:15,EcOz:17, GaMuSp:16} and recall the discussion in \S\ref{sec:BEM}).

\subsection{Inverses of the combined-field operators in terms of the exterior DtN and the interior impedance to Dirichlet maps}\label{sec:6.2}
We introduced in the proof of Lemma \ref{lem:recipe} the notation $\DtN$ for the exterior DtN map.  Similarly, for the Lipschitz open set $\Omega_-$, let $\ItD:H^{-1/2}(\Gamma)\to H^{1/2}(\Gamma)$ denote the interior impedance-to-Dirichlet map, that takes impedance data $g\in H^{-1/2}(\Gamma)$  to $\gamma_-u\in H^{1/2}(\Gamma)$, where $u$ is the solution of $\Delta u + k^2 u = 0$ in $\Omega_-$ that satisfies the impedance boundary condition \eqref{eq:ibc} below.  $\ItD$ extends uniquely to a bounded mapping from $H^{s}(\Gamma) \rightarrow H^{s+1}(\Gamma)$ for $-1\leq s\leq 0$ (see \cite[Theorem 2.32]{ChGrLaSp:12}).

The inverse of $\opA$ can be written in terms of $\DtN$ and $\ItD$ as
\beq\label{eq:key}
\opAinv = I - \DtN\ItD + \ri \eta \ItD
\eeq
\cite[Theorem
2.33]{ChGrLaSp:12}. The fact that $\ItD$, as well as $\DtN$, appears in this formula is because a boundary integral equation formulation of the interior impedance problem leads to the same operator $\opA$; see \cite[Theorem 2.38]{ChGrLaSp:12}.
To use \eqref{eq:key} to bound $\opAinv$ one therefore needs bounds on the exterior DtN map, provided for $(R_0,R_1)$ obstacles in Theorem \ref{thm:dtn}, but also bounds on the interior impedance to Dirichlet map given by the following theorem.

\begin{theorem}\label{cor:ItD}
If $\Omega_-$ is either star-shaped with respect to a ball or $C^\infty$ then
\beq \label{eq:ItDmain}
\|\ItD\|_{H^s_k(\Gamma)\to H^{s+1}_k(\Gamma)} \lesssim 1, \quad \mbox{for } k> 0,
\eeq
uniformly for $-1\leq s\leq 0$,
and
\beq \label{eq:ItDmain2}
\|\ItD\|_{H^s_k(\Gamma)\to H^{s}_k(\Gamma)} \lesssim k^{-1}, \quad \mbox{for } k> 0,
\eeq
uniformly for $-1\leq s\leq 1$.
If $\Omega_-$ is only piecewise smooth, the bounds \eqref{eq:ItDmain} and \eqref{eq:ItDmain2} hold with $1$ and $k^{-1}$ replaced by $k^{1/4}$ and $k^{-3/4}$.
If $\Omega_-$ is only Lipschitz, the bounds \eqref{eq:ItDmain} and \eqref{eq:ItDmain2} hold with $1$ and $k^{-1}$ replaced by $k^{1/2}$ and $k^{-1/2}$.
\end{theorem}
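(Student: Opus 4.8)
The plan is to reduce both \eqref{eq:ItDmain} and \eqref{eq:ItDmain2} to the single endpoint estimate $\|\ItD\|_{L^2(\Gamma)\to H^1_k(\Gamma)}\lesssim C(k)$, where $C(k)=1$ in the star-shaped-with-respect-to-a-ball and $C^\infty$ cases, $C(k)=k^{1/4}$ in the piecewise-smooth case, and $C(k)=k^{1/2}$ in the Lipschitz case, and then to spread this over the full Sobolev range using the abstract machinery of \S\ref{sec:interp}. The key structural observation is that $\ItD$ is self-adjoint with respect to the real $L^2(\Gamma)$ inner product $(\cdot,\cdot)^r_\Gamma$: applying Green's second identity in $\Omega_-$ to two interior-impedance solutions $u_1,u_2$ \emph{without} complex conjugation, the impedance terms cancel, so $\int_\Gamma (\gamma_-u_1)\,\partial_n^-u_2\,\rd s=\int_\Gamma (\gamma_-u_2)\,\partial_n^-u_1\,\rd s$, i.e.\ $(\ItD g_1,g_2)^r_\Gamma=(g_1,\ItD g_2)^r_\Gamma$. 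Granting the endpoint bound, \eqref{eq:adj} gives $\|\ItD\|_{H^{-1}_k(\Gamma)\to L^2(\Gamma)}=\|\ItD\|_{L^2(\Gamma)\to H^1_k(\Gamma)}\lesssim C(k)$, and \eqref{eq:adj2} applied with $(s,t)=(-1,0)$ then yields \eqref{eq:ItDmain} for all $-1\le s\le 0$ at once (there $\tau-\sigma=t-s=1$). Finally, the normalisation of the $k$-weighted scales forces $\|v\|_{H^s_k(\Gamma)}\le k^{-1}\|v\|_{H^{s+1}_k(\Gamma)}$ for $-1\le s\le 0$ (immediate at $s=0$ and $s=-1$, then by \eqref{eq:interp}), which turns \eqref{eq:ItDmain} into \eqref{eq:ItDmain2} for $-1\le s\le 0$; and for $0\le s\le 1$ one uses in addition $\|\ItD\|_{H^s_k(\Gamma)\to H^s_k(\Gamma)}=\|\ItD\|_{H^{-s}_k(\Gamma)\to H^{-s}_k(\Gamma)}$, again from \eqref{eq:adj} and the self-adjointness just noted.

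So the theorem comes down to proving the endpoint bound: if $u$ solves $\Delta u+k^2u=0$ in $\Omega_-$ with the impedance condition \eqref{eq:ibc} (and $|\eta|\sim k$, as elsewhere in the paper) and $g\in L^2(\Gamma)$, then $\|\gamma_-u\|_{H^1_k(\Gamma)}\lesssim C(k)\|g\|_{L^2(\Gamma)}$. Two elementary ingredients go into the star-shaped case, which I would present in full as the illustrative mechanism. First, an $L^2(\Gamma)$-trace bound: taking the imaginary part of $\int_{\Omega_-}\overline u(\Delta u+k^2u)\,\rd x=0$ integrated by parts and using \eqref{eq:ibc} gives $|\eta|\,\|\gamma_-u\|^2_{L^2(\Gamma)}=\big|\Im\int_\Gamma\overline{\gamma_-u}\,g\,\rd s\big|\le\|\gamma_-u\|_{L^2(\Gamma)}\|g\|_{L^2(\Gamma)}$, hence $\|\gamma_-u\|_{L^2(\Gamma)}\lesssim k^{-1}\|g\|_{L^2(\Gamma)}$. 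Second, the Rellich identity of Lemma \ref{lem:morint} with $D=\Omega_-$, $Z(x)=x$ (after translating so the defining ball is centred at $0$), $\beta=0$, and $\alpha=(d-1)/2$ constant, applied to $v=u$ — legitimate because $u\in V(\Omega_-)$ by the Ne\v{c}as regularity result \cite[\S5.1.2]{Ne:67}, \cite[Theorem 4.24(ii)]{Mc:00}, \cite[Lemma 2.3]{BaSpWu:16}. With these choices the volume side of \eqref{eq:morid_int} collapses to $\|u\|^2_{H^1_k(\Omega_-)}$, leaving
\begin{align*}
\|u\|^2_{H^1_k(\Omega_-)}&=\int_\Gamma (x\cdot n)\big(|\partial_n^-u|^2-|\nabla_S\gamma_-u|^2+k^2|\gamma_-u|^2\big)\,\rd s\\
&\qquad+2\Re\int_\Gamma\Big(x\cdot\overline{\nabla_S\gamma_-u}+\tfrac{d-1}{2}\,\overline{\gamma_-u}\Big)\partial_n^-u\,\rd s.
\end{align*}

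Substituting \eqref{eq:ibc} to eliminate $\partial_n^-u$, using $|\eta|\sim k$, and using the $L^2(\Gamma)$-trace bound above to dispose of the resulting $k^2|\gamma_-u|^2$, $k|\gamma_-u|\,|g|$ and $|g|^2$ contributions, one arrives (after Cauchy--Schwarz on the cross terms $x\cdot\overline{\nabla_S\gamma_-u}\,\partial_n^-u$) at
\[
\|u\|^2_{H^1_k(\Omega_-)}+\int_\Gamma (x\cdot n)\,|\nabla_S\gamma_-u|^2\,\rd s\ \lesssim\ \|g\|^2_{L^2(\Gamma)}+\varepsilon\,\|\nabla_S\gamma_-u\|^2_{L^2(\Gamma)}
\]
for every $\varepsilon>0$. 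If $\Omega_-$ is star-shaped with respect to a ball then $x\cdot n\ge a>0$ a.e.\ on $\Gamma$, so a small choice of $\varepsilon$ absorbs the last term and gives $\|\nabla_S\gamma_-u\|_{L^2(\Gamma)}\lesssim\|g\|_{L^2(\Gamma)}$; together with the $L^2(\Gamma)$ bound on $\gamma_-u$ this is exactly the endpoint estimate with $C(k)=1$ (and, as a by-product, $\|\partial_n^-u\|_{L^2(\Gamma)}\lesssim\|g\|_{L^2(\Gamma)}$, which is also used in \S\ref{sec:6.2}). For the $C^\infty$, piecewise-smooth and Lipschitz cases I would instead invoke the corresponding sharp interior-impedance bounds established in \cite{BaSpWu:16} (see also \cite{Sp:14,GrLoMeSp:15,ChMo:08,ChHeMo:15}): for smooth domains the trace estimate no longer rests on global star-shapedness but on semiclassical/elliptic regularity together with the damping supplied by the $|\eta|\sim k$ impedance term, while for piecewise-smooth and Lipschitz boundaries one runs the Rellich identity over the smooth pieces of $\Gamma$, the corners accounting for the $k^{1/4}$ and $k^{1/2}$ losses respectively.

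The main obstacle is precisely this $C^\infty$ (and, in weaker form, rough-boundary) endpoint bound: for a general smooth $\Omega_-$ the quantity $x\cdot n$ carries no sign, the star-shaped Rellich argument collapses, and one must work microlocally with the damped interior problem — this is the substance of the ``sharp interior impedance'' estimates of \cite{BaSpWu:16}, which I would cite as a black box rather than reprove. Everything else is bookkeeping: tracking the precise $k$-powers through the Cauchy--Schwarz splittings in the Rellich step (so that the $k^2|\gamma_-u|^2$ terms really are controlled by $\|g\|^2_{L^2(\Gamma)}$), checking that the interpolation and duality statements of \S\ref{sec:interp} apply with the normalisations fixed there, and recalling that $\ItD$ does extend boundedly to the full Sobolev ranges claimed (\cite[Theorem 2.32]{ChGrLaSp:12}).
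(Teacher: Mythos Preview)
Your proposal is correct and follows essentially the same route as the paper: establish the $s=0$ endpoint $\|\ItD\|_{L^2(\Gamma)\to H^1_k(\Gamma)}\lesssim C(k)$ by combining the Green's-identity $L^2$ trace bound with the tangential-gradient bound (Rellich for star-shaped, \cite{BaSpWu:16} for $C^\infty$, \cite{Sp:14} for the rough cases), then transfer to $-1\le s\le 0$ via the real-self-adjointness of $\ItD$ and \eqref{eq:adj2}, and finally deduce \eqref{eq:ItDmain2} from the $k^{-1}$ embedding $H^{s+1}_k\hookrightarrow H^s_k$. The only differences are cosmetic: you spell out the star-shaped Rellich calculation explicitly where the paper simply cites \cite[Equation 3.12]{MoSp:14}, and for the piecewise-smooth and Lipschitz losses the paper points specifically to \cite[Lemma 4.6]{Sp:14} rather than the broader list you give (your heuristic ``corners account for the loss'' is not quite the mechanism in the general Lipschitz case, but since you treat this as a black-box citation anyway it does not affect the argument).
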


\bpf
We first show that \eqref{eq:ItDmain} holds for $s=0$.
Given $g\in L^2(\Gamma)$ let $u\in H^1(\Omega_-)$ denote the solution to $\Delta u + k^2 u = 0$ in $\Omega_-$ that satisfies
\begin{equation} \label{eq:ibc}
\partial_n^- u - \ri \eta \gamma_- u = g \quad \mbox{on } \Gamma,
\end{equation}
with $\eta =ck$, for some $c\in \R\setminus\{0\}$. Then, for $k> 0$,
\begin{equation} \label{eq:ItD1}
 \|\partial_n^-u\|_{L^2(\Gamma)} + k \|\gamma_-u\|_{L^2(\Gamma)} \lesssim \|g\|_{L^2(\Gamma)}.
\end{equation}
by Green's theorem; see, e.g., \cite[Lemma 4.2]{Sp:14}.
If $\Omega_-$ is either star-shaped with respect to a ball or $C^\infty$, then
\begin{equation} \label{eq:ItD}
\|\nabla_S(\gamma_-u)\|_{L^2(\Gamma)} \lesssim \|g\|_{L^2(\Gamma)}
\end{equation}
by \cite[Equation 3.12]{MoSp:14} and \cite[Corollary 1.9]{BaSpWu:16} respectively.
Combining \eqref{eq:ItD1} and \eqref{eq:ItD} then gives \eqref{eq:ItDmain} for $s=0$.

Since $\ItD$ is self-adjoint with respect to the real inner product $(\cdot,\cdot)_\Gamma^r$  \cite[p.\ 130]{ChGrLaSp:12}, it follows from \eqref{eq:adj2} that \eqref{eq:ItDmain} holds for $-1\leq s\leq 0$, uniformly in $s$.
Further, it is immediate from the definition of the norm on $H^s_k(\Gamma)$ that the embedding operator from $H^s_k(\Gamma)$ to $H^{s-1}_k(\Gamma)$ has norm $\leq k^{-1}$ for $k>0$ and $s=0,1$, and hence for $0\leq s\leq 1$ by interpolation (see \eqref{eq:interp}). Thus \eqref{eq:ItDmain2} follows from \eqref{eq:ItDmain}.

If $\Omega_-$ is piecewise smooth, then, given $k_0>0$, the bound \eqref{eq:ItD} holds for $k\geq k_0$, but with $\|g\|_{L^2(\Gamma)}$ replaced by $k^{1/4}\|g\|_{L^2(\Gamma)}$ and in the general Lipschitz case \eqref{eq:ItD} holds for $k\geq k_0$ with $\|g\|_{L^2(\Gamma)}$ replaced by $k^{1/2}\|g\|_{L^2(\Gamma)}$; see \cite[Lemma 4.6]{Sp:14}. The adjustments to \eqref{eq:ItDmain} and \eqref{eq:ItDmain2} then follow.
\epf

\subsection{From resolvent estimates to Corollary \ref{cor:CFIE}}\label{sec:6.3}
The following lemma captures arguments made  in \cite{BaSpWu:16} for the nontrapping case (where $K(k)=1$ for $k\geq0$), and provides a general recipe for bounding $\opAinv$ as a corollary of resolvent estimates in $\Omega_+$. Bounds on $A_{k,\eta}^{-1}$ (as opposed to $\opAinv$ ) then follow immediately from \eqref{eq:duals} and \eqref{eq:duals2}.

\begin{lemma} \label{lem:recipe2} Suppose that $\Omega_+$ satisfies a $K$ resolvent estimate, for some $K\in C[0,\infty)$ with $K(k)\geq 1$ for $k>0$, and that $\eta = ck$, for some $c\in \R\setminus\{0\}$. Then,  given $k_0>0$, provided each component of $\Omega_-$ is either star-shaped with respect to a ball or $C^\infty$,
\begin{equation}\label{eq:Ainvb}
\|\opAinv\|_{H_k^s(\Gamma)\to H_k^s(\Gamma)} \lesssim K(k) \;\mbox{ and }\; \|\opAinv\|_{H^s(\Gamma)\to H^s(\Gamma)} \lesssim k^{-s}K(k), \quad \mbox{for } k\geq k_0,
\end{equation}
uniformly for $-1\leq s\leq 0$. The bounds \eqref{eq:Ainvb} hold with $K(k)$ and $k^{-s}K(k)$ replaced by $k^{1/4}K(k)$ and $k^{1/4-s}K(k)$, respectively, if each component of $\Omega_-$ is either star-shaped with respect to a ball (and Lipschitz) or piecewise smooth. They hold with $K(k)$ and $k^{-s}K(k)$ replaced by $k^{1/2}K(k)$ and $k^{1/2-s}K(k)$, respectively, in the general Lipschitz case.
\end{lemma}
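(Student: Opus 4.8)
The plan is to use the representation \eqref{eq:key}, namely $\opAinv = I - \DtN\ItD + \ri \eta \ItD$, and simply estimate each of the three terms as an operator on $H^s_k(\Gamma)$ and then on $H^s(\Gamma)$, for $-1\leq s\leq 0$. The $K$ resolvent estimate feeds into this via Lemma \ref{lem:recipe}: since $\Omega_+$ satisfies a $K$ resolvent estimate, the DtN-map bound \eqref{eq:dtn3l} gives $\|\DtN\|_{H^s_k(\Gamma)\to H^{s-1}_k(\Gamma)}\lesssim K(k)$ uniformly for $0\leq s\leq 1$, and hence, by the self-adjointness of $\DtN$ with respect to the real inner product together with \eqref{eq:adj2} (exactly as in the proof of Lemma \ref{lem:recipe}), also uniformly for $-1/2\leq s\leq 1$; in particular $\|\DtN\|_{H^{s+1}_k(\Gamma)\to H^s_k(\Gamma)}\lesssim K(k)$ for the range of $s$ we need. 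The $\ItD$-map bounds are supplied by Theorem \ref{cor:ItD}: in the star-shaped-with-respect-to-a-ball or $C^\infty$ case, $\|\ItD\|_{H^s_k(\Gamma)\to H^{s+1}_k(\Gamma)}\lesssim 1$ and $\|\ItD\|_{H^s_k(\Gamma)\to H^s_k(\Gamma)}\lesssim k^{-1}$, uniformly in $s$.

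Next I would compose. The identity operator contributes $\lesssim 1\leq K(k)$. The term $\DtN\ItD$ factors as $H^s_k(\Gamma)\xrightarrow{\ItD}H^{s+1}_k(\Gamma)\xrightarrow{\DtN}H^s_k(\Gamma)$, giving $\|\DtN\ItD\|_{H^s_k(\Gamma)\to H^s_k(\Gamma)}\lesssim K(k)\cdot 1 = K(k)$. For the third term, $\eta = ck$ with $|\eta|\sim k$, and $\|\ItD\|_{H^s_k(\Gamma)\to H^s_k(\Gamma)}\lesssim k^{-1}$, so $\|\ri\eta\ItD\|_{H^s_k(\Gamma)\to H^s_k(\Gamma)}\lesssim k\cdot k^{-1}=1\leq K(k)$. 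Summing, $\|\opAinv\|_{H^s_k(\Gamma)\to H^s_k(\Gamma)}\lesssim K(k)$ for $k\geq k_0$, uniformly for $-1\leq s\leq 0$, which is the first bound in \eqref{eq:Ainvb}. The second bound, on the unweighted space $H^s(\Gamma)$, follows by comparing the norms: for $-1\leq s\leq 0$ one has $\|\cdot\|_{H^s(\Gamma)}$ and $\|\cdot\|_{H^s_k(\Gamma)}$ related by a factor that is a power of $k$, and tracking these factors through the composition — or more cleanly, noting that $\opAinv$ differs from a self-adjoint-type object only in lower-order ways and invoking the interpolation-scale bookkeeping recorded in \eqref{eq:interp}–\eqref{eq:adj2} — produces the stated extra factor $k^{-s}$ (which is $\geq 1$ since $s\leq 0$). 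The piecewise-smooth and general Lipschitz variants are identical except that the $\ItD$ bounds from Theorem \ref{cor:ItD} degrade by $k^{1/4}$ and $k^{1/2}$ respectively; since $\ItD$ appears (to first power) in each of the two nontrivial terms, these factors propagate linearly, giving the claimed $k^{1/4}K(k)$ and $k^{1/2}K(k)$ (and their $H^s(\Gamma)$ counterparts with the additional $k^{-s}$).

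The main obstacle, such as it is, is bookkeeping rather than substance: one must be careful that the DtN-map estimate \eqref{eq:dtn3l} is applied on the correct \emph{shifted} range of Sobolev indices — $\DtN$ maps $H^{s+1}_k$ to $H^s_k$, so I need \eqref{eq:dtn3l} at parameter $s+1\in[0,1]$ when $s\in[-1,0]$ — and that the uniformity in $s$ is genuinely available across the whole interval. The extension of the $\DtN$ bound below $s=1/2$ (needed since we will use it at parameter values in $[0,1]$, which is fine, but the duality extension is what makes the $H^s_k\to H^s_k$ bound on $\opAinv$ work uniformly) relies on $\DtN$ being self-adjoint for the real inner product and on \eqref{eq:adj2}; this is exactly the argument already carried out at the end of the proof of Lemma \ref{lem:recipe}, so I would simply cite it. Finally, I should remark that the decomposition into (finitely many) components of $\Omega_-$ is harmless because $\ItD$ decouples over the components and the bound on each component is uniform, so the worst component governs the rate — this is implicit in Theorem \ref{cor:ItD} and needs only a one-line comment.
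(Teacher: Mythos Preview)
Your proposal is correct and follows essentially the same route as the paper: use the representation \eqref{eq:key}, combine the DtN bound from Lemma \ref{lem:recipe} with the $\ItD$ bounds from Theorem \ref{cor:ItD} to get the $H^s_k$ estimate, then pass to the unweighted $H^s$ spaces. The paper's version of the second step is slightly cleaner than yours: rather than ``tracking factors through the composition'' or invoking any self-adjointness of $\opAinv$ (which it does not have), it simply uses the endpoint norm comparisons $\|\psi\|_{H^{-1}_k(\Gamma)}\lesssim \|\psi\|_{H^{-1}(\Gamma)}\lesssim k\|\psi\|_{H^{-1}_k(\Gamma)}$ to get the $H^s$ bound at $s=0$ and $s=-1$, and then applies the interpolation bound \eqref{eq:interp} for intermediate $s$. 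Your digression extending the $\DtN$ bound via self-adjointness is also unnecessary, since \eqref{eq:dtn3l} already covers exactly the index range $s+1\in[0,1]$ that you need.
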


\bpf
The first bound in \eqref{eq:Ainvb} follows by combining Lemma \ref{lem:recipe}, \eqref{eq:key}, and
Theorem \ref{cor:ItD}.
The second bound  in \eqref{eq:Ainvb} when $s=0$ and $s=-1$ follows from the first and the fact that $\|\psi\|_{H^{-1}_k(\Gamma)}\lesssim \|\psi\|_{H^{-1}(\Gamma)}\lesssim k \|\psi\|_{H^{-1}_k(\Gamma)}$ for $\psi \in H^{-1}(\Gamma)$ and $k\geq k_0$. The second bound  in \eqref{eq:Ainvb} when $-1<s<0$ then follows by the interpolation bound \eqref{eq:interp}.
\epf

\

The upper bounds \eqref{eq:Ainv_bound_main} and \eqref{eq:Ainv_bound_main1} in
Corollary \ref{cor:CFIE} and the comments in Remark \ref{rem:A} follow immediately from combining Lemma \ref{lem:recipe2} and Theorem \ref{thm:resol}. The following lemma proves the lower bound \eqref{eq:Ainv_bound_mainlower} and so completes the proof of Corollary \ref{cor:CFIE}.
In this lemma, for $x=(x_1,...,x_d)\in \R^d$ we write $\widetilde x := (x_2,...,x_{d})\in \R^{d-1}$, so that $x=(x_1,\widetilde x)$.

\begin{lemma} \label{lem:smysh}
Suppose that $\epsilon>0$, $a_2>a_1$, and that $\Gamma_1\cup \Gamma_2\subset \Gamma$ and $\Omega_C\subset \Omega_+$, where $\Gamma_j:=\{x=(a_j,\widetilde x):|\widetilde x|<\epsilon\}$, for $j=1,2$, and $\Omega_C:= \{x=(x_1,\widetilde x):|\widetilde x|<\epsilon \mbox{ and }a_1<x_1<a_2\}$. Then, where $a:=a_2-a_1$, provided $|\eta|\lesssim k$,
\beq\label{eq:Ainv_bound_main3}
\normAinv_{\LtGt} \gtrsim k,\quad  \mbox{for } k\in Q:=\{m\pi/a:m\in \mathbb{N}\}.
\eeq
\end{lemma}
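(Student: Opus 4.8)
The plan is to construct an explicit quasimode supported in the trapping channel $\Omega_C$ and show that it forces $\normAinv$ to be large. Recall the standard observation (used, e.g., at the end of \cite[\S3]{ChMo:08}) that a lower bound on $\normAinv$ can be obtained by exhibiting a Helmholtz solution $u$ in $\Omega_+$ satisfying the radiation condition whose Neumann trace $\partial_n^+u$ is small in $L^2(\Gamma)$ compared with the combined-field right-hand side $\opA\,\partial_n^+u = f_{k,\eta}$; since $f_{k,\eta}$ can be expressed in terms of the Cauchy data $(\gamma_+u,\partial_n^+u)$ of $u$, it suffices to make the \emph{Dirichlet} trace $\gamma_+u$ large while keeping $\partial_n^+u$ under control. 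Rather than work with a genuine scattered field, the cleaner route is to work directly with the operator $\opA$ and a cleverly chosen density: recall $\opA = \tfrac12 I + D_k' - \ri\eta S_k$, and if we can find $\phi\in L^2(\Gamma)$ with $\|\phi\|_{L^2(\Gamma)}=1$ and $\|\opA\phi\|_{L^2(\Gamma)}\lesssim k^{-1}$, then $\normAinv_{\LtGt}\geq \|\phi\|/\|\opA\phi\|\gtrsim k$.

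First I would build the ansatz. On the channel walls $\Gamma_1$ and $\Gamma_2$ the outward normals are $\mp e_1$ (pointing out of $\Omega_-$), these pieces of $\Gamma$ are flat and parallel, and the quantisation $k = m\pi/a$ means $\sin(k x_1)$ or $\cos(k(x_1-a_1))$ is an eigenfunction of the 1-d Dirichlet/Neumann Laplacian on $(a_1,a_2)$. So I would take a cutoff $\psi(\widetilde x)\in C_0^\infty(\{|\widetilde x|<\epsilon\})$ and consider $v(x) := \psi(\widetilde x)\,\cos\!\big(k(x_1-a_1)\big)$, which nearly solves the Helmholtz equation in $\Omega_C$ — the error $(\Delta+k^2)v = \cos(k(x_1-a_1))\,\Delta_{\widetilde x}\psi(\widetilde x)$ is $O(1)$ pointwise but independent of $k$, and more importantly $v$ vanishes on $\partial\Omega_C$ away from the channel ends while on the ends $x_1=a_1$, $x_1=a_2$ one has $v = \pm\psi(\widetilde x)$, $\partial_{x_1}v = 0$. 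Thus $v$ has $O(1)$ Dirichlet data on $\Gamma_1\cup\Gamma_2$ but \emph{vanishing Neumann data} there. The key quantitative point — the genuine content of the quantisation condition — is precisely that $\partial_n v = 0$ on the channel mouths when $ka$ is a multiple of $\pi$; for other $k$ there is an $O(k)$ Neumann trace that destroys the estimate.

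Next I would convert this interior quasimode into a statement about $\phi := \partial_n^+u$ for the exterior scattered field $u$ whose total field vanishes on $\Gamma$ and whose "incident" part is $v$ extended appropriately, OR — more robustly — insert the interior quasimode into Green's identity against the layer potentials to estimate $\|\opA\phi\|_{L^2(\Gamma)}$ directly. Concretely: let $\phi$ be the (suitably normalised, cutoff) restriction to $\Gamma$ of $\partial_n v$ on $\Omega_C$ glued to zero elsewhere — since $\partial_n v=0$ on $\Gamma_1\cup\Gamma_2$ this $\phi$ is supported only where the cutoff $\psi$ transitions, giving $\|\phi\|_{L^2(\Gamma)}\sim 1$ with constants depending on $\psi$ but not $k$. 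Then represent $u = \mathcal{S}_k\phi$ (single-layer) and compute $\opA\phi$ via the jump relations; using that $v$ is an exact Helmholtz solution up to the $O(1)$ bulk error and that $v$ matches the layer-potential representation on the channel, the quantities $S_k\phi$ and $D_k'\phi$ combine so that $\opA\phi$ is, up to the bulk error and boundary-transition terms, equal to $-\ri\eta$ times the Dirichlet trace contribution, which is $O(1)$ — and then the factor $1/|\eta|\gtrsim 1/k$ appears when we solve for $\phi$. I expect the main obstacle to be exactly this gluing/representation step: controlling the far-field and the non-channel part of $\Gamma$ rigorously, since the quasimode $v$ is purely local and one must show that extending it (or the associated exterior field) contributes only lower-order terms, and that the cutoff-induced errors are $O(1)$ in $L^2(\Gamma)$ uniformly in $k$. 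A clean way to finesse this is to follow the explicit eigenfunction construction of \cite{ChMo:08}: take $f$ supported in $\Omega_C$ of the form $f(x) = \psi(\widetilde x)\cos(k(x_1-a_1))\cdot(\text{small correction})$, solve $\Delta u+k^2u=-f$, show $\|u\|_{L^2(\Omega_R)}\gtrsim k\|f\|_{L^2(\Omega_+)}$ by testing against $v$ itself and using the near-orthogonality from quantisation, and then translate the resolvent lower bound into the $\opAinv$ lower bound via the representation $\opAinv = I - P^+_{DtN}P^{-,\eta}_{ItD} + \ri\eta P^{-,\eta}_{ItD}$ of \eqref{eq:key} run backwards — i.e.\ a lower bound on the DtN-type map forces a lower bound on $\opAinv$. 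Either route reduces the whole lemma to the single computation that the channel quasimode with $k=m\pi/a$ has vanishing Neumann trace on the channel mouths, which is elementary, plus book-keeping of $k$-uniform constants.
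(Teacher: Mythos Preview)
Your overall strategy --- exhibit $\phi\in L^2(\Gamma)$ with $\|\phi\|\sim 1$ and $\|\opA\phi\|\lesssim k^{-1}$ --- is the right one, and matches the paper. But the concrete candidate you propose does not work. You set $\phi$ to be (essentially) the Neumann trace on $\Gamma$ of the channel quasimode $v(x)=\psi(\widetilde x)\cos(k(x_1-a_1))$, and you correctly observe that $\partial_n v=0$ on $\Gamma_1\cup\Gamma_2$ when $ka\in\pi\mathbb{N}$. But $\Gamma_1\cup\Gamma_2$ is the \emph{only} part of $\Gamma$ that meets the support of $v$ (the lateral walls $|\widetilde x|=\epsilon$ of $\Omega_C$ are not in $\Gamma$, since $\Omega_C\subset\Omega_+$). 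So your $\phi$ is identically zero --- the claim that it is ``supported where $\psi$ transitions'' confuses the geometry, since the transition of $\psi$ is in the $\widetilde x$ direction and lies \emph{inside} $\Gamma_1\cup\Gamma_2$, where $\partial_{x_1}v$ still vanishes exactly. Your fallback route, running \eqref{eq:key} backwards from a resolvent/DtN lower bound, is too vague: that formula expresses $\opAinv$ as a \emph{difference} of compositions, and a lower bound on one summand does not give a lower bound on the whole without controlling cancellation.

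The paper's proof takes a different tack that you should note. It puts the unknown density directly on the channel mouths: $\phi=c_j\chi(\widetilde x)$ on $\Gamma_j$ ($j=1,2$), zero elsewhere, with $\|\phi\|\sim 1$. Defining $u=\mathcal{S}_k\phi$ as the single-layer potential, the jump relations give $\opA\phi=\partial_n^- u-\ri\eta\gamma_- u$ (the \emph{interior} impedance trace). Now the Fourier representation of the half-space single-layer potential shows that, to leading order, each source $\Gamma_j$ radiates a plane wave $\tfrac{\ri c_j}{2k}\chi(\widetilde x)\re^{\ri k|x_1-a_j|}$; choosing $c_1=1$, $c_2=-\re^{\ri ka}$ makes these two plane waves cancel \emph{everywhere outside the channel} when $ka\in\pi\mathbb{N}$. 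Since $\Gamma\subset\overline{\Omega_*}$ (the region outside the channel), the remainder analysis --- bounding the Fourier tail --- gives $u=O(k^{-2})$ and $\partial_n^- u=O(k^{-1})$ uniformly on $\Gamma$, hence $\|\opA\phi\|\lesssim k^{-1}$. The quantisation condition enters not as ``vanishing Neumann trace of an interior quasimode'' but as a phase-matching that makes two radiating sources interfere destructively on all of $\Gamma$.
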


Observe that the geometric assumptions in Lemma \ref{lem:smysh} include every $(R_0,R_1,a)$ parallel trapping obstacle, if necessary after an appropriate change of coordinate system.

\

\begin{proof}[Proof of Lemma \ref{lem:smysh}]
Let $S:= \{\widetilde x\in \R^{d-1}:|\widetilde x|<\epsilon/2\}$. Choose a non-zero $\chi\in C_0^\infty(\R^{d-1})$ supported in $S$. For some $c_j\in \C$ with $|c_1|=|c_2|=1$, let $\phi_j((a_j,\widetilde x)) := c_j\chi(\widetilde x)$, $\widetilde x\in \R^{d-1}$, for $j=1,2$. Let $\phi\in C^1(\Gamma)$ be defined by $\phi(x) = \phi_j(x)$, $x\in \Gamma_j$, for $j=1,2$, $\phi(x)=0$ otherwise, and define $u\in H^1_{\mathrm{loc}}(\R^d)\cap C(\R^d)\cap C^2(\R^{d}\setminus \supp(\phi))$ by
\beq \label{eq:slp}
u(x) := \int_\Gamma \Phi_k(x,y)\phi(y)\,\rd s(y), \quad \mbox{for }x\in \R^d.
\eeq
Using the standard jump relations \cite[p.\ 115]{ChGrLaSp:12}, we see that
\beq \label{eq:fke}
\opA \phi =f_{k,\eta}:= \partial_n^- u - \ri \eta \gamma_- u.
\eeq
Clearly, $\|\phi\|_{L^2(\Gamma)}\gtrsim 1$. We prove the lemma by showing that $\|f_{k,\eta}\|_{L^2(\Gamma)}\lesssim k^{-1}$ if $k\in Q$ and $|\eta|\lesssim k$, provided we choose the phase of $c_2/c_1$ correctly.

Let $\widehat \chi$ denote the Fourier transform of $\chi$, given by
$$
\widehat \chi(\xi) := \int_{\R^{d-1}} \chi(\widetilde x) \re^{-\ri \widetilde x \cdot \xi}\, \rd \widetilde x, \quad \xi\in \R^{d-1}.
$$
Clearly $u = u^{(1)}+u^{(2)}$, where
\begin{align} \label{eq:uj1}
u^{(j)}(x) &:= \int_{\Gamma_j} \Phi_k(x,y)\phi_j(y)\, \rd s(y)\\ \label{eq:uj2}
&= \frac{\ri c_j}{2(2\pi)^{d-1}}\int_{\R^{d-1}} \frac{\widehat \chi(\xi)}{\sqrt{k^2-|\xi|^2 }}\,\exp\Big(\ri\big(\widetilde x\cdot \xi + |x_1-a_j|\sqrt{k^2-|\xi|^2 }\,\big)\Big)\, \rd \xi,
\end{align}
for $j=1,2$ and $x\in \R^d$, with $\sqrt{k^2-|\xi|^2 } = \ri \sqrt{|\xi|^2 -k^2}$ for $|\xi|>k$.
The fact that \eqref{eq:uj1} and \eqref{eq:uj2} are equivalent follows from Fourier representations for layer potentials and boundary integral operators; see, e.g., \cite[Theorem 3.1]{ChHe:15}.

For $x\in \R^d$,
$$
u^{(j)}(x) = \frac{\ri c_j}{2k}\, \chi(\widetilde x)\, \re^{\ri k|x_1-a_j|} + v^{(j)}(x)
$$
where
$$
v^{(j)}(x) = \frac{\ri c_j}{2(2\pi)^{d-1}}\int_{\R^{d-1}} \widehat \chi(\xi)\,\re^{\ri \widetilde x\cdot\xi}\left(\frac{\exp(\ri |x_1-a_j|\sqrt{k^2-|\xi|^2 }\,)}{\sqrt{k^2-|\xi|^2 }}-\frac{\re^{\ri k|x_1-a_j|}}{k}\right)  \, \rd \xi.
$$
The point of this decomposition is that
$v^{(j)}(x)= \cO(k^{-2})$ as $k\to\infty$, uniformly on every bounded subset of $\R^d$, and, provided $k\in Q$, one can choose $c_1$ and $c_2$ such that
\beq\label{eq:dagger}
\frac{\ri c_1}{2k}\, \chi(\widetilde x)\, \re^{\ri k|x_1-a_1|} + \frac{\ri c_2}{2k}\, \chi(\widetilde x)\, \re^{\ri k|x_1-a_2|}
\eeq
is zero for $x\in \Gamma$ (indeed for all $x\not\in\Omega_C$); these observations will lead to the required estimate $\|f_{k,\eta}\|_{L^2(\Gamma)}= \cO(k^{-1})$.

To obtain the bound on $v^{(j)}(x)$ we observe that, for $x\in \R^d$,
\begin{eqnarray*}
|v^{(j)}(x)| &\leq &\frac{1}{2(2\pi)^{d-1}}\int_{\R^{d-1}} |\widehat \chi(\xi)|\,\left(\left|\frac{\exp(\ri |x_1-a_j|(\sqrt{k^2-|\xi|^2 }\,-k))-1}{\sqrt{k^2-|\xi|^2 }}\right|+\frac{|k-\sqrt{k^2-|\xi|^2 }|}{k|\sqrt{k^2-|\xi|^2 }|}\right)  \, \rd \xi\\
& \leq & \frac{k|x_1-a_j|+3}{2(2\pi)^{d-1} k^2}\int_{\R^{d-1}} \frac{|\widehat \chi(\xi)||\xi|^2 }{|\sqrt{k^2-|\xi|^2 }|}  \, \rd \xi,
\end{eqnarray*}
since $|\re^{\ri t}-1|\leq |t|$  for $t\in \R$,
\beq\label{eq:expbound1}
|\sqrt{k^2-|\xi|^2 }\,-k| = |\xi|^2 /|\sqrt{k^2-|\xi|^2 }\,+k|\leq |\xi|^2 /k \quad \mbox{for } \xi\in \R^{d-1},
\eeq
and, for $|\xi|>k$ and $b\geq 0$,
\beq\label{eq:expbound2}
\left|\exp(\ri b(\sqrt{k^2-|\xi|^2 }\,-k))- 1\right|\leq 2 \leq  2|\xi|^2 /k^2.
\eeq
Moreover, since $\hat \chi$ is in the Schwartz space $\cS(\R^{d-1})$, it vanishes rapidly at infinity, and thus for some $C>0$ we have $|\hat \chi(\xi)| \leq C(1+|\xi|)^{-2-d}$ for $\xi\in \R^{d-1}$, so that, for some $C', C''>0$,
\begin{eqnarray*}
\int_{\R^{d-1}} \frac{|\hat\chi(\xi)||\xi|^2 }{|\sqrt{k^2-|\xi|^2 }|}  \, \rd \xi &\leq & C' \int_0^\infty \frac{\rd r}{|\sqrt{k^2-r^2}|(1+r)^2}\\ & \leq &
C''\left(\frac{1}{k^2}\int_{k/2}^{3k/2} \frac{\rd r}{|\sqrt{k^2-r^2}|} + \frac{1}{k} \int_0^\infty (1+r)^{-2}\,\rd r\right) = \cO(k^{-1}).
\end{eqnarray*}
Thus $v^{(j)}(x) = \cO(k^{-2})$ as $k\to \infty$ for $j=1,2$, uniformly in $x$ in every bounded subset of $\R^d$.

Since $\Omega_C\subset \Omega_+$, we have $\Gamma \subset \overline{\Omega_*}$  and $\Gamma\setminus (\Gamma_1 \cup \Gamma_2)\subset \Omega_*$, where $\Omega_* := \{x\in \R^d: x_1< a_1 \mbox{ or } x_1> a_2 \mbox{ or }|\widetilde x|> \epsilon/2\}$.  Choosing $c_1=1$ and $c_2=-\re^{\ri ka}$, we see that \eqref{eq:dagger} equals zero for $x\in \overline{\Omega_*}$ and $k\in Q$, and thus
\beq \label{eq:bound1}
u(x) = u^{(1)}(x)+u^{(2)}(x)=v^{(1)}(x)+v^{(2)}(x), \quad\mbox{so that}\quad u(x)=\cO(k^{-2})
\eeq
for $k\in Q$,
uniformly on bounded subsets of $\overline{\Omega_*}$, in particular uniformly on $\Gamma$. Since $\Delta u + k^2 u = 0$ in $\Omega_*$,   it follows using \eqref{eq:intreg} that $\nabla u(x)  = \cO(k^{-1})$
for $k\in Q$,
uniformly  for $x\in \Gamma\setminus (\Gamma_1 \cup \Gamma_2)$. Finally,
from \eqref{eq:uj2} we have that,
with this choice of $c_1$ and $c_2$ and $k\in Q$, for $x\in \Gamma_1\cup \Gamma_2$,
\begin{align} \nonumber
|\partial_n^- u(x)| &= \frac{1}{2(2\pi)^{d-1}}\left|\int_{\R^{d-1}} \widehat \chi(\xi)\re^{\ri \widetilde x \cdot \xi}\left(\exp(\ri a(\sqrt{k^2-|\xi|^2 }\,-k))-1\right)\, \rd \xi\right|\\ \label{eq:bound2}
& \leq  \frac{a}{2(2\pi)^{d-1} k}\int_{\R^{d-1}} |\widehat \chi(\xi)| |\xi|^2 \, \rd \xi,
\end{align}
using \eqref{eq:expbound1}, \eqref{eq:expbound2}, and that $\pi\leq ka$ for $k\in Q$.
Putting these bounds together in \eqref{eq:fke}, we have shown that
$f_{k,\eta}(x) = \cO(k^{-1})$ as $k\to\infty$ with $k\in Q$ and $|\eta|\lesssim k$, uniformly on $\Gamma$.
\end{proof}

\

The proof of Lemma \ref{lem:smysh} was inspired by the billiard-type arguments used to construct high-frequency quasimodes, going back to Keller and Rubinow \cite{KeRu:60}; see, e.g., \cite{BaBu:08} and the references therein. We also expect that lower bounds on $\normAinv$ similar to that in
Lemma \ref{lem:smysh} can be obtained when $\Omega_+$ supports arbitrary closed finite billiards and $\Gamma$ is flat in the neighbourhood of each reflection.

\subsubsection{Comparison between Lemma \ref{lem:smysh} and the results of \cite{ChGrLaLi:09}}
In the proof of Lemma \ref{lem:smysh}, $f_{k,\eta}$ is bounded via its representation \eqref{eq:fke} as boundary data for an interior impedance problem satisfied by $u$. In
\cite{ChGrLaLi:09} a less-sharp bound is obtained in 2-d, that $\normAinv_{\LtGt} \gtrsim k^{9/10}$ for $k\in Q$, via an alternative formula for $f_{k,\eta}$. Precisely, with $\phi$ and $f_{k,\eta}$ as in the above proof, it is shown that $\phi=\partial_n^+ u^t$ is the normal derivative of the total field for sound soft scattering when the incident field is
\beq \label{eq:ui}
u^i(x) = \int_{\Omega_+} \Phi_k(x,y) f(y) \, \rd s(y), \quad x\in \R^d,
\eeq
with $f$ supported in $\Omega_C\subset \Omega_+$ given by $f(x) := k^{-1}\sin(k x_1) \widetilde \Delta \chi(\widetilde x)$,
 for $x\in \Omega_C$, where $\widetilde \Delta$ is the Laplacian in $\R^{d-1}$. It follows from
\eqref{eq:f} that $f_{k,\eta}=\dnpu^i - \ri \eta \gamma_+ u^i$. This, together with \eqref{eq:ui}, is a formula for $f_{k,\eta}$ as an oscillatory integral over $\supp(f)\subset \Omega_C$. Estimating this integral (suboptimally) in \cite[Theorem 5.1]{ChGrLaLi:09} led to the bound $\|f_{k,\eta}\|_{L^2(\Gamma)}\lesssim k^{-9/10}$.

\subsubsection{Counterexample to a conjecture on coercivity}\label{rem:coercivity}
Under the assumptions that $\Omega_-$ is $C^3$, piecewise analytic, and has strictly positive
curvature,
\cite{SpKaSm:15} shows  that there exists an $\eta_0>0$ (equal to one when $\Oi$ is a ball) and $k_0>0$ such that if $\eta \geq \eta_0 k$ then $\opA$ is {\em coercive} uniformly in $k$ for $k\geq k_0$, meaning that
\beq \label{eq:coer}
\left|\left(\opA\phi,\phi\right)_\Gamma\right| \geq c_k \|\phi\|_{L^2(\Gamma)}^2, \quad \mbox{for all } k\geq k_0 \mbox{ and } \phi\in L^2(\Gamma), \quad \mbox{with } c_k \gtrsim 1;
\eeq
this is shown via a novel use of Morawetz identities in \cite{SpKaSm:15}, generalising an earlier result for the case of a circle/sphere obtained via Fourier analysis \cite{DoGrSm:07}.
This result implies that $\|\opAinv\|_{L^2(\Gamma)\to L^2(\Gamma)} \lesssim 1$, but this bound on the inverse does not imply the stronger \eqref{eq:coer}.

\begin{figure}[h]
\centering
\scalebox{0.7}{
\begin{tikzpicture}[line cap=round,line join=round,>=triangle 45,x=1.0cm,y=1.0cm, scale=1.75]
\colorlet{lightgray}{black!15}
\fill[color=lightgray] (0,0) -- (0,1) .. controls (0,1.5) and (-1,1.5) .. (-1,0) .. controls (-1,-2) and (5,-2) .. (5,0) .. controls (5,1.5) and (4,1.5) .. (4,1) -- (4,0) .. controls (4,-1) and (3.5,0.5) .. (2.5,1.5) .. controls (2,2) and (2,2) .. (1.5,1.5) .. controls (0.5,0.5) and (0,-1) .. (0,0);
\draw (0,0) -- (0,1) .. controls (0,1.5) and (-1,1.5) .. (-1,0) .. controls (-1,-2) and (5,-2) .. (5,0) .. controls (5,1.5) and (4,1.5) .. (4,1) -- (4,0) .. controls (4,-1) and (3.5,0.5) .. (2.5,1.5) .. controls (2,2) and (2,2) .. (1.5,1.5) .. controls (0.5,0.5) and (0,-1) .. (0,0);
\draw [very thick] (0,0) -- (0,1);
\draw [very thick] (4,0) -- (4,1);
\draw (0,0.6) node[anchor=east] {\large $\Gamma_1$};
\draw (4,0.6) node[anchor=west] {\large $\Gamma_2$};
\draw [blue,thick,<->] (0,0.5) -- (4,0.5);
\draw (2,0.5) node[anchor=south] {\Large $a$};
\end{tikzpicture}}
\caption{The obstacle $\Omega_-$ shaded grey is nontrapping, so that $\|\opAinv\|_{L^2(\Gamma)\to L^2(\Gamma)} \lesssim 1$ if $\eta = ck$, for some constant $c\in \R\setminus \{0\}$. However, \S\ref{rem:coercivity} shows that $\opA$ is not coercive uniformly in $k$.}
\label{fig:coer}
\end{figure}
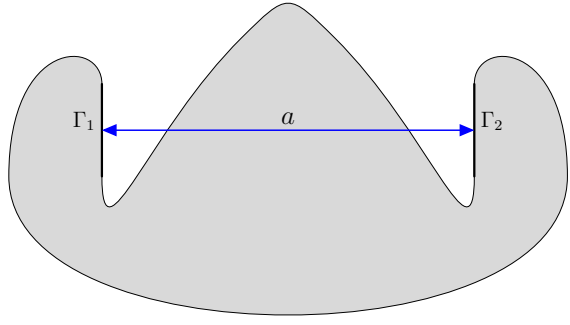

The advantage of coercivity, as opposed to just boundedness of the inverse, for the numerical analysis of Galerkin methods is discussed in \cite{SpKaSm:15};
for example, the coercivity result in \cite{SpKaSm:15} completes
the numerical analysis of high frequency numerical-asymptotic boundary element methods for scattering by convex obstacles \cite{DoGrSm:07,EcOz:17}.

Based on computations of the numerical range (an operator is coercive if and only if zero is not in the closure of its numerical range), \cite{BeSp:11}  conjectured that, if $\Omega_-$ is nontrapping, then \eqref{eq:coer} holds with $\eta = k$ (i.e.\ $A^\prime_{k,k}$ is coercive uniformly in $k$)
 \cite[Conjecture 6.2]{BeSp:11}. This conjecture implies that $\|(A^\prime_{k,k})^{-1}\|_{L^2(\Gamma)\to L^2(\Gamma)} \lesssim 1$ for nontrapping domains, and this result was recently proved in \cite[Theorem 1.13]{BaSpWu:16}. The calculations in Lemma \ref{lem:smysh}, however, show that this conjecture is false.

Suppose that $\Omega_+$ satisfies the conditions of Lemma \ref{lem:smysh}, except that we no longer require that $\Omega_C\subset \Omega_+$, instead we require that $\Omega_-$ is nontrapping (which implies that $\Gamma$ passes through $\Omega_C$), and we require that $n(x)=e_1$ on $\Gamma_1$, $n(x)=-e_1$ on $\Gamma_2$. An example is Figure \ref{fig:coer}. Define $\phi\in L^2(\Gamma)$ as in the proof of Lemma \ref{lem:smysh}, so that the value of $\|\phi\|_{L^2(\Gamma)}\neq 0$ is independent of $k$. Equations \eqref{eq:slp}, \eqref{eq:fke}, \eqref{eq:bound1} and \eqref{eq:bound2} still hold, and still imply that $f_{k,\eta}(x)=\cO(k^{-1})$ for $k\in Q=\{m\pi/a:m\in \mathbb{N}\}$ with $|\eta| \lesssim k$, uniformly on $\Gamma_1\cup\Gamma_2$ (but now not on all of $\Gamma$ since $\Gamma\not\subset \overline{\Omega_*}$). Thus, provided $|\eta| \lesssim k$, since $\supp (\phi)\subset \Gamma_1 \cup \Gamma_2$,
$$
\left(\opA\phi,\phi\right)_\Gamma = \int_{\Gamma_1\cup\Gamma_2} f_{k,\eta}\overline{\phi}\, \rd s =\cO(k^{-1}),
$$
as $k\to\infty$ through the sequence $Q$,
so that \eqref{eq:coer} is false in this case. It may still hold that $\opA$ is coercive, but if  this is the case then the coercivity constant $c_k=\cO(k^{-1})$ as $k\to\infty$ through the sequence $Q$.

\subsection{Summary of wavenumber-explicit bounds on $\opAinv$} \label{sec:other}

Table \ref{tab:Ainv} below summarises: (i) the (sharpest) known resolvent estimates for scattering by obstacles, discussed in \S\ref{sec:1}; (ii) the sharpest known bounds on the DtN map, taken from \cite{BaSpWu:16} for the nontrapping cases, proved as corollaries in \S\ref{sec:dtn} for the trapping cases; (iii) the bounds on the inf-sup constant obtained from the resolvent estimates (as discussed in Remark \ref{rem:resinfsup}); and (iv) the upper bounds on $\normAinv_{\LtGt}$  that follow from the resolvent estimates by the general Lemma \ref{lem:recipe2}. (The bounds that were already known have been discussed earlier in \S\ref{sec:151}, the other bounds are stated here for the first time as corollaries of the resolvent estimates and Lemma \ref{lem:recipe2}.)  The upper bounds in the last column, by Lemma \ref{lem:recipe2}, are also upper bounds for $\|\opAinv\|_{H_k^s(\Gamma)\to H_k^s(\Gamma)}$, uniformly for $-1\leq s\leq 0$, and the same bounds, multiplied by a factor $k^{-s}$, are upper bounds for $\|\opAinv\|_{H^s(\Gamma)\to H^s(\Gamma)}$. Further, bounds on $A_{k,\eta}^{-1}$ follow immediately from \eqref{eq:duals} and \eqref{eq:duals2}.

In the last column of Table \ref{tab:Ainv} and in row 6 we include lower as well as upper bounds. Each lower bound holds {\em for at least one example} in the class indicated and {\em for at least some unbounded sequence of wavenumbers}. (The particular bound $\|\opAinv\|_{L^2(\Gamma)\to L^2(\Gamma)} \gtrsim 1$ \cite[Lemma 4.1]{ChGrLaLi:09} holds for $k\geq k_0$ whenever part of $\Gamma$ is $C^1$.)
The lower bounds in the last row and column of the table, and their relationship to the upper bound, should be interpreted as follows. Firstly, that in 2-d there exists an $\Omega_+$ that is $C^\infty$ (\cite{BeChGrLaLi:11} gives specific elliptic-cavity trapping examples  of which Figure \ref{fig:examples}(a) is typical) and positive constants $\alpha_2 \geq \alpha_1$ such that, with $\eta=ck$ for some $c\in \R\setminus\{0\}$,
$$
\exp(\alpha_1 k) \lesssim \|\opAinv\|_{L^2(\Gamma)\to L^2(\Gamma)} \lesssim \exp(\alpha_2k)
$$
as $k\to\infty$ through some positive, unbounded sequence of wavenumbers. Secondly, that in both 2-d and 3-d, whenever $\Omega_-$ permits elliptic trapping, allowing an elliptic closed broken geodesic $\gamma$, provided $\Gamma$ is analytic in neighbourhoods of the vertices of $\gamma$ and the local Poincar\'e map near $\gamma$ satisfies the additional conditions of \cite[(H1)]{CaPo:02}, it holds for every $q<2/11$ ($d=2$), $q<1/7$ ($d=3$), that there exists $\alpha_3>0$ such that
\beq \label{eq:carpop}
\exp(\alpha_3k^q) \lesssim \|\opAinv\|_{L^2(\Gamma)\to L^2(\Gamma)}
\eeq
as $k\to\infty$ through some positive, unbounded sequence of wavenumbers. The lower bound \eqref{eq:carpop} follows immediately from Theorem 1 in Cardoso and Popov \cite{CaPo:02}, which shows the existence, under these assumptions, of exponentially small quasi-modes, which moreoever can be constructed to be localised arbitrarily close to $\gamma$, and \cite[Equation (5.39)]{ChGrLaSp:12}, which converts exponentially small quasi-modes into lower bounds on $\|\opAinv\|_{L^2(\Gamma)\to L^2(\Gamma)}$.

\begin{table}[h]
\footnotesize

\begin{center}
\begin{tabular}{|p{2.9 cm} |l |l |p{2cm} |p{3cm} |}
  \hline
Geometry of $\Omega_-$ & $K(k)$ & $\| \DtN\|_{H^1\to L^2}$ & $\beta_R^{-1}$ & $\|\opAinv\|_{L^2(\Gamma)\to L^2(\Gamma)}$\\ \hline \hline
1.\ $C^\infty$ and nontrapping & $\lesssim 1$ \cite{Va:75,MeSj:82}
& $\lesssim 1$ \cite{BaSpWu:16} & $\lesssim k$ \cite{Sp:14}& $\lesssim 1$ \cite{BaSpWu:16} \hfill ($\gtrsim 1$ \cite{ChGrLaLi:09})\\
2.\ Nontrapping polygon  & $\lesssim 1$ \cite{BaWu:13} & $\lesssim 1$ \cite{BaSpWu:16}& $\lesssim k$ \cite{Sp:14}& $\lesssim k^{1/4}$
\hfill ($\gtrsim 1$ \cite{ChGrLaLi:09})  \\
3.\ Star-shaped and Lipschitz & $\lesssim 1$ \cite{Mo:75,ChMo:08}
& $\lesssim 1$ \cite{BaSpWu:16} & $\lesssim k$ \cite{ChMo:08} & $\lesssim k^\beta$ \hfill ($\gtrsim 1$ \cite{ChGrLaLi:09})\\
4.\ Star-shaped with respect to a ball and Lipschitz & $\lesssim 1$ \cite{Mo:75,ChMo:08}& $\lesssim 1$ \cite{BaSpWu:16}
& $\lesssim k$ \cite{ChMo:08} & $\lesssim 1$ \cite{ChMo:08,Sp:14} \hfill ($\gtrsim 1$ \cite{ChGrLaLi:09})\\ \hline
\hline\hline
5.\ Ikawa-like union of convex obstacles & $\lesssim \log(2+k)$ \cite{Bu:04} & $\lesssim \log(2+k)$  &  $\lesssim k\log(2+k)$ & $\lesssim \log(2+k)$ \hfill ($\gtrsim 1$ \cite{ChGrLaLi:09})\\
6.\ $(R_0,R_1)$ obstacle & $\lesssim k^2$ ($\gtrsim k$ \cite{ChMo:08})& $\lesssim k^2$ \hfill & $\lesssim k^3$  ($\gtrsim k^2$\hspace{-0.5ex} \cite{ChMo:08}) & $\lesssim k^{2+\beta}$ \hfill ($\gtrsim k$)\\
&&&&\\
7.\ Arbitrary $C^\infty$ & $\lesssim \re^{\alpha k}$ \cite{Bu:98}& $\lesssim \re^{\alpha k}$ & $\lesssim k\re^{\alpha k}$ & $\lesssim \re^{\alpha k}$ \hfill ($\gtrsim \re^{\alpha k}$ (2-d) \cite{BeChGrLaLi:11},\newline
\hspace*{3ex} \hfill $\gtrsim \re^{\alpha k^q}$)\\
  \hline
\end{tabular}
\end{center}
\caption{\footnotesize Summarising the known wavenumber-explicit upper bounds that hold for $k\geq k_0>0$; in the last column and in row 6 we also show the known lower bounds. Rows 1-4 apply in nontrapping cases. Rows 5-7 apply to trapping geometries, row 6 in particular to $(R_0,R_1,a)$ parallel trapping obstacles. In the last column we assume that $\eta = ck$, for some non-zero real constant $c$, and $\beta=0$ if each component of $\Omega_-$ is $C^\infty$ or star-shaped with respect to a ball, $\beta=1/4$ if each component is merely piecewise smooth or star-shaped with respect to  a ball, $\beta=1/2$ for general Lipschitz $\Omega_-$. The bounds without citations are stated explicitly for the first time in this paper.}\label{tab:Ainv}
\end{table}

\normalsize

\subsection{Bounds on $\cond(\opA)$} \label{sec:cond}

There has been sustained interest in the condition number $\cond(\opA)$, defined by
\eqref{eq:cond}, of $\opA$ as an operator on $L^2(\Gamma)$; see Remark \ref{rem:history} below and the references therein. We therefore put the bounds in the last column of Table \ref{tab:Ainv} together with existing bounds on the norm of $\opA$ to produce the following result giving upper and lower bounds on $\cond(\opA)$
and how this depends on the geometry of $\Omega_-$. The bounds in Parts (iii), (iv), (v), and the upper bound and most of the lower bounds in (vi) are given here for the first time, with the bounds in (i) and (ii) given in \cite[\S6]{ChGrLaLi:09} and \cite[\S7.1]{BaSpWu:16}, and the lower bound \eqref{eq:cnb7} in (vi) for a 2-d elliptic cavity given in \cite[Theorem 2.8]{BeChGrLaLi:11}.

\begin{theorem}[Bounds on the condition number] \label{cor:condno} Suppose that $\eta = ck$, for some non-zero real constant $c$, and that $k_0>0$.
\begin{itemize}
\item[(i)] Let $\Omega_-$ be $C^\infty$ and nontrapping, or star-shaped with respect to a ball and piecewise smooth, and suppose that
$\Gamma$ has strictly positive curvature.
Then, for $k\geq k_0$,
\beq \label{eq:cnb}
k^{1/3} \lesssim \cond(\opA) \lesssim k^{1/3}\log(2+k); \quad \mbox{indeed} \quad \cond(\opA) \sim k^{1/3}
\eeq
if $\Omega_-$ is a ball in 2-d or 3-d (i.e., a circle or sphere).
\item[(ii)] Let $\Omega_-$ be $C^\infty$ and nontrapping, or star-shaped with respect to a ball and piecewise smooth. Then, for $k\geq k_0$,
\beq \label{eq:cnb2}
k^{1/3} \lesssim \cond(\opA) \lesssim k^{1/2}\log(2+k); \quad \mbox{indeed} \quad k^{1/2} \lesssim \cond(\opA) \lesssim k^{1/2}\log(2+k)
\eeq
if $\Gamma$ contains a line segment. Moreover these bounds hold without the log factors in 2-d; in particular $\cond(\opA) \sim k^{1/2}$ in 2-d if $\Omega_-$ is $C^\infty$ and nontrapping and $\Gamma$ contains a line segment.
\item[(iii)] Let $\Omega_-$ be a nontrapping polygon. Then, for $k\geq k_0$,
\beq \label{eq:cnb3}
k^{1/2} \lesssim \cond(\opA) \lesssim k^{3/4}; \quad \mbox{indeed} \quad \cond(\opA) \sim k^{1/2}
\eeq
if $\Omega_-$ is star-shaped with respect to a ball.
\item[(iv)] Let $\Omega_-$ be an Ikawa-like union of convex obstacles. Then, for $k\geq k_0$,
\beq \label{eq:cnb4}
k^{1/3} \lesssim \cond(\opA) \lesssim k^{1/3} [\log(2+k)]^2.
\eeq
\item[(v)] Let $\Omega_-$ be an $(R_0,R_1,a)$ parallel trapping obstacle. Then, where the upper bounds hold for all $k\geq k_0$ while the lower bounds  apply specifically for $k\in Q:= \{m\pi/a:m\in \mathbb{N}\}$, it holds that
\beq \label{eq:cnb5}
k^{3/2} \lesssim \cond(\opA) \lesssim k^{2+d/2}; \quad \mbox{indeed } k^{3/2} \lesssim \cond(\opA) \lesssim k^{5/2+\beta} \log(2+k)
\eeq
if $\Gamma$ is piecewise smooth, with $\beta =0$ if each component of $\Omega_-$ is either $C^\infty$ or star-shaped with respect to a ball, $\beta=1/4$  otherwise. For all $k\geq k_0$ the weaker lower bound holds that $\cond(\opA)\gtrsim k^{1/2}$.
\item[(vi)] Let $\Omega_-$ be $C^\infty$. Then there exists $\alpha>0$ such that, for $k\geq k_0$,
\beq \label{eq:cnb6}
k^{1/3} \lesssim \cond(\opA) \lesssim \exp(\alpha k).
\eeq
Further, whenever $\Omega_-$ permits elliptic trapping, allowing an elliptic closed broken geodesic $\gamma$, provided $\Gamma$ is analytic in neighbourhoods of the vertices of $\gamma$ and the local Poincar\'e map near $\gamma$ satisfies the additional conditions of \cite[(H1)]{CaPo:02}, it holds for every $q<2/11$ ($d=2$), $q<1/7$ ($d=3$), that there exists $\alpha^\prime>0$ such that
\beq \label{eq:cnb7}
\cond(\opA) \gtrsim \exp(\alpha^\prime k^q),
\eeq
for some unbounded sequence of positive wavenumbers $k$. Moreover, \eqref{eq:cnb7} holds with $q=1$ in the 2-d case of an elliptic cavity in the sense of \cite[Theorem 2.8]{BeChGrLaLi:11} (an example is Figure \ref{fig:examples}(a)).
\end{itemize}
\end{theorem}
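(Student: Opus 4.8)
The plan is to exploit the trivial factorisation $\cond(\opA) = \normA_{\LtGt}\,\normAinv_{\LtGt}$ and to read off each of the six cases by multiplying the sharpest available bound on $\normAinv_{\LtGt}$ against the sharpest available bound on the operator norm $\normA_{\LtGt}$. The upper bounds on the inverse are exactly those assembled in the last column of Table~\ref{tab:Ainv}: the nontrapping bound $\normAinv_{\LtGt}\lesssim 1$ (improved to $\lesssim k^{1/4}$ for the nontrapping polygon, as recalled in \S\ref{sec:other}); the star-shaped-with-respect-to-a-ball bound \eqref{eq:Ainvknown} of \cite{ChMo:08}; the hyperbolic-trapping bound \eqref{eq:mild} (from Burq's resolvent estimate \cite{Bu:04} via Lemma~\ref{lem:recipe2}); the $(R_0,R_1)$-obstacle bound $\normAinv_{\LtGt}\lesssim k^{2+\beta}$ of Corollary~\ref{cor:CFIE} and Remark~\ref{rem:A}; and the general $C^\infty$ bound \eqref{eq:worst} from \cite{Bu:98}. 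The upper bounds on $\normA_{\LtGt}$ I would take from the literature via $\normA_{\LtGt}\leq \tfrac{1}{2} + \|D_k'\|_{\LtGt} + |\eta|\,\|S_k\|_{\LtGt}$ with $\eta=ck$: the bounds of \cite{ChGrLaLi:09} in the general Lipschitz, piecewise-smooth, polygonal, and line-segment cases, and the bounds of Galkowski--Smith \cite{GaSm:15} and Han--Tacker \cite{HaTa:15}, which give $\normA_{\LtGt}\lesssim k^{1/3}\log(2+k)$ when $\Gamma$ is $C^\infty$ with strictly positive curvature. Multiplying out then yields every stated upper bound: $k^{1/3}\cdot 1\cdot\log(2+k)$ in (i), $k^{1/3}\log(2+k)\cdot\log(2+k)$ in (iv), $k^{1/2}\cdot k^{1/4}$ in (iii), $k^{1/2}\log(2+k)\cdot k^{2+\beta}$ (or the cruder $k^{d/2}\cdot k^{2}$) in (v), and $\re^{\alpha k}$ in (vi) after absorbing the polynomial norm factor into a slightly larger exponent.

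For the lower bounds I would use two kinds of input. First, lower bounds on the norm that hold for all $k\geq k_0$: $\normA_{\LtGt}\gtrsim k^{1/3}$ whenever $\Gamma$ is $C^\infty$ (more generally whenever $\Gamma$ has a piece of strictly positive curvature) and $\normA_{\LtGt}\gtrsim k^{1/2}$ whenever $\Gamma$ contains a line segment; both follow from the corresponding lower bounds on $\|D_k'\|_{\LtGt}$ (respectively $\|\eta S_k\|_{\LtGt}$) in \cite{ChGrLaLi:09}, using that $\tfrac{1}{2}I$ is bounded. Combined with the universal lower bound $\normAinv_{\LtGt}\gtrsim 1$ (valid for $k\geq k_0$ whenever part of $\Gamma$ is $C^1$, \cite[Lemma~4.1]{ChGrLaLi:09}), these give $\cond(\opA)\gtrsim k^{1/3}$ in (i), (iv), (vi) and $\cond(\opA)\gtrsim k^{1/2}$ in (ii) (when $\Gamma$ has a line segment), (iii), and (v). Second, the sharp lower bounds on the inverse: Lemma~\ref{lem:smysh} gives $\normAinv_{\LtGt}\gtrsim k$ for $k\in Q$, which together with $\normA_{\LtGt}\gtrsim k^{1/2}$ yields the sharper $\cond(\opA)\gtrsim k^{3/2}$ for $k\in Q$ in \eqref{eq:cnb5}; and the quasimode-based bounds give \eqref{eq:cnb7} directly via $\cond(\opA)\geq\normAinv_{\LtGt}$, using \cite[Theorem~1]{CaPo:02} and \cite[Eq.~(5.39)]{ChGrLaSp:12} (exactly as for the last row of Table~\ref{tab:Ainv}) in the elliptic-broken-geodesic case, and \cite[Theorem~2.8]{BeChGrLaLi:11} in the 2-d elliptic-cavity case. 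The ``indeed'' refinements (e.g.\ $\cond(\opA)\sim k^{1/3}$ for the circle or sphere, $\cond(\opA)\sim k^{1/2}$ for polygons star-shaped with respect to a ball and for 2-d $C^\infty$ nontrapping $\Gamma$ with a line segment) follow by observing that the matching upper and lower bounds just obtained then coincide, using the exact $L^2$ asymptotics for the ball/circle from \cite{ChGrLaLi:09}, \cite{BaSpWu:16}.

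I do not expect a genuine analytic obstacle: all the hard analysis is already in Theorems~\ref{thm:resol}--\ref{thm:dtn}, Corollary~\ref{cor:CFIE}, and the cited norm estimates, and the proof is an assembly. The step needing the most care will be the bookkeeping: matching the precise geometric hypotheses of the norm bounds in \cite{ChGrLaLi:09,GaSm:15,HaTa:15} (Lipschitz versus piecewise smooth versus $C^\infty$, with or without a strict-positive-curvature hypothesis, and whether or not $\Gamma$ contains a flat piece) against the hypotheses in each part of the theorem, so that the exponents printed --- in particular the $\beta$-dependence in (v) and the distinction between the log-free bounds available in 2-d and the bounds with a $\log(2+k)$ factor in 3-d --- are exactly the sharpest the ingredients support. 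A minor further point is to confirm that each lower bound on $\normAinv_{\LtGt}$, which holds only along an unbounded sequence of wavenumbers, survives multiplication by the everywhere-valid lower bounds on $\normA_{\LtGt}$; this is immediate since the latter hold for all $k\geq k_0$.
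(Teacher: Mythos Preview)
Your proposal is correct and follows essentially the same approach as the paper: both factor $\cond(\opA)=\normA_{\LtGt}\,\normAinv_{\LtGt}$, draw the inverse bounds from Table~\ref{tab:Ainv}, and combine with the norm bounds on $S_k$ and $D_k'$ from \cite{ChGrLaLi:09,GaSm:15,HaTa:15} (the paper cites \cite{Gi:97,DoGrSm:07} rather than \cite{ChGrLaLi:09,BaSpWu:16} for the sharp ball asymptotics, and it is Han--Tacy, not Han--Tacker). One small caution: your remark that the lower bounds on $\normA_{\LtGt}$ follow from those on $\|D_k'\|$ or $\|\eta S_k\|$ ``using that $\tfrac12 I$ is bounded'' is not a valid argument on its own (the pieces could in principle cancel); the paper simply invokes the lower bounds on $\normA_{\LtGt}$ directly as consequences of the cited results, and you should do the same.
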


\begin{proof}[Proof of Theorem \ref{cor:condno}]
To bound $\opA$ \eqref{eq:CFIEdef}  it is sufficient to obtain bounds on the operators $S_k$ and $D_k^\prime$ (and note that $D^\prime_k$ has the same norm as $D_k$ as an operator on $L^2(\Gamma)$ as $D_k^\prime$ is the adjoint of $D_k$ with respect to the real inner product on $L^2(\Gamma)$; see, e.g., \cite[Equation 2.37]{ChGrLaSp:12}).
Given $k_0>0$, for $k\geq k_0$, if $\Gamma$ is Lipschitz, then
\beq \label{eq:SDbs}
\|S_k\|_{L^2(\Gamma)\to L^2(\Gamma)} \lesssim k^{(d-3)/2} \quad \mbox{and} \quad \|D^\prime_k\|_{L^2(\Gamma)\to L^2(\Gamma)} \lesssim k^{(d-1)/2}
\eeq
\cite[Theorems 3.3, 3.5]{ChGrLaLi:09}. Furthermore, if $\Gamma$ is piecewise smooth then
\beq \label{eq:SDbs2}
k^{-1/2}\lesssim \|S_k\|_{L^2(\Gamma)\to L^2(\Gamma)} \lesssim k^{-1/2}\log(2+k) \quad \mbox{and} \quad k^{1/4}\lesssim \|D^\prime_k\|_{L^2(\Gamma)\to L^2(\Gamma)} \lesssim k^{1/4}\log(2+k),
\eeq
and if $\Gamma$ is piecewise smooth and has strictly positive curvature then
\beq \label{eq:SDbs3}
k^{-2/3}\lesssim \|S_k\|_{L^2(\Gamma)\to L^2(\Gamma)} \lesssim k^{-2/3}\log(2+k) \quad \mbox{and} \quad k^{1/6}\lesssim\|D^\prime_k\|_{L^2(\Gamma)\to L^2(\Gamma)} \lesssim k^{1/6}\log(2+k)
\eeq
\cite[Appendix A]{HaTa:15} (with the upper bounds on $S_k$ first given in \cite[Theorem 1.2]{GaSm:15}). The lower bound $\|S_k\|_{\LtG\rightarrow \LtG}\gtrsim k^{-1/2}$ holds when $\Gamma$ contains a line segment and is $C^2$ in a neighbourhood thereof by \cite[Theorem 4.2]{ChGrLaLi:09} in 2-d and \cite[Lemma 3.1]{GaSp:18} in 3-d.

These bounds imply that, with $\eta = ck$,
$\|\opA\|_{L^2(\Gamma)\to L^2(\Gamma)} \lesssim k^{1/3}\log(2+k)$ if $\Gamma$ is piecewise smooth with each piece having strictly positive curvature; is $\lesssim k^{1/2}\log(2+k)$ if $\Gamma$ is piecewise smooth; and is $\lesssim k^{(d-1)/2}$ in general. These same results imply that $\|\opA\|_{L^2(\Gamma)\to L^2(\Gamma)} \gtrsim k^{1/3}$ if $\Gamma$ is piecewise smooth; is $\gtrsim k^{1/2}$ if $\Gamma$ contains a line segment and is $C^2$ in a neighbourhood thereof. Furthermore, $\|\opA\|_{L^2(\Gamma)\to L^2(\Gamma)}\sim k^{1/3}$ for a ball (in 2-d and 3-d) by \cite{Gi:97,DoGrSm:07}, and, because of the compactness of $D_k^\prime$ (and $S_k$) on $L^2(\Gamma)$ when $\Gamma$ is $C^1$ \cite{FabJod78}, if a part of $\Gamma$ is $C^1$ then $\|\opA\|_{L^2(\Gamma)\to L^2(\Gamma)}\geq 1/2$ and $\|\opAinv\|_{L^2(\Gamma)\to L^2(\Gamma)}\geq 2$ for $k>0$ \cite[Lemma 4.1]{ChGrLaLi:09}.

The corollary follows by combining these estimates with the bounds on $\|\opAinv\|_{L^2(\Gamma)\to L^2(\Gamma)}$ summarised in Table \ref{tab:Ainv} (recalling the discussion of case (vi) in \S\ref{sec:other}).
\end{proof}

\

The theorem makes clear that the conditioning of $\opA$ (with $\eta$ proportional to $k$) depends strongly on the type of trapping. When $\Oi$ is a ball the conditioning grows precisely as $k^{1/3}$. The conditioning is worse than this for the mild hyperbolic trapping of an Ikawa-like union of convex obstacles, but at most by logarithmic factors. A $C^\infty$ nontrapping obstacle has slightly higher growth in condition number (proportional to $k^{1/2}$) if $\Gamma$ contains a line segment.

By contrast, $(R_0,R_1,a)$ parallel trapping obstacles (the main focus of this paper), have only polynomial growth in condition number, but at a faster rate than all the nontrapping cases considered in the above corollary, at least as fast as $k^{3/2}$ as $k$ increases through a particular unbounded sequence. Finally, if the obstacle allows a stable (elliptic) periodic orbit, then the condition number grows exponentially as $k$ increases through some unbounded sequence.

\bre[The history of studies of the conditioning of $\opA$]\label{rem:history}
The study of the conditioning of $\opA$, and its dependence on the choice of the coupling parameter $\eta$, and latterly also on the geometry of $\Gamma$, has a long history, dating back to the original studies by Kress and Spassov \cite{KrSp:83,Kr:85} for the case where $\Omega_-$ is a circle or sphere, these studies focussed on the low-wavenumber limit. The first rigorous (and sharp) high frequency bounds on $\cond(\opA)$, specifically for a circle/sphere and carried out using the Fourier analysis framework of \cite{KrSp:83}, were obtained in \cite{DoGrSm:07}, and rigorous results for high frequency for more general geometries were obtained in \cite{ChGrLaLi:09}, \cite{BeChGrLaLi:11}, and \cite{BaSpWu:16}.
\ere

\bre[Other choices of coupling parameter $\eta$]
Corollary \ref{cor:condno} focused on the case when the coupling parameter $\eta$ is chosen proportional to $k$ as this is the recommendation from various computational and theoretical studies \cite{Kr:85,Am:90,BrKu01,ChGrLaLi:09,GaMuSp:16}. For discussions of conditioning for other choices of $\eta$, and of the effect of choices of $\eta$ on the condition number and other aspects of the effectiveness of numerical solution methods, see \cite{Kr:85,Am:90,BrKu01,ChGrLaLi:09,BeChGrLaLi:11,Ma:16,BaSpWu:16,GaMuSp:16}.
\ere

\subsection{Proof of Corollary \ref{cor:hversion} (convergence of the $h$-BEM)}\label{sec:hBEMproof}

\begin{definition}[Shape-regular triangulation] \label{def:triang}
Suppose $\mathcal{T}$ is a triangulation of $\Gamma$ in the sense, e.g., of \cite{LoMe:11}, so that each element $K\in \mathcal{T}$ (with $K\subset \Gamma$) is the image of a reference element $\widehat K = \{\xi\in \R^{d-1}: 0<\xi_i<1, \sum_{i=1}^{d-1}\xi_i < 1\}$ under a $C^1$-diffeomorphism $F_K:\overline{\widehat K}\to \overline{K}$, with Jacobian $J_K := D F_K$. Then $\mathcal{T}$ is {\em shape-regular} if there exists a constant $c_S>0$ such that, for every $K\in \mathcal{T}$,
\beq \label{eq:lambda}
\frac{\sup_{\xi\in K}\lambda_K^{\mathrm{max}}(\xi)}{\inf_{\xi\in K}\lambda_K^{\mathrm{min}}(\xi)} \leq c_S,
\eeq
where $\lambda_K^{\mathrm{max}}$ and $\lambda_K^{\mathrm{min}}$ denote the maximum and minimum eigenvalues of $J_K^TJ_K$.
\end{definition}

\bpf[Proof of Corollary \ref{cor:hversion}]
With $p\geq 0$, define the boundary element space $\mathcal{S}^p(\mathcal{T}_h)$ as in \S\ref{sec:BEM}, and let $P_{hp}:L^2(\Gamma)\to \mathcal{S}^p(\mathcal{T}_h)$ be orthogonal projection.
The heart of the proof is the fact that if, for some $\delta>0$,
\beq \label{eq:2ndkind}
\|I-P_{hp}\|_{H^1(\Gamma)\to L^2(\Gamma)} \|D_k^\prime - \ri \eta S_k\|_{L^2(\Gamma)\to H^1(\Gamma)} \|\opAinv\|_{L^2(\Gamma)\to L^2(\Gamma)} \leq \frac{\delta}{1+\delta},
\eeq
then the Galerkin solution $v_{hp}$ of the variational problem \eqref{eq:galerkin} is well-defined and the quasi-optimal error estimate \eqref{eq:melenk} holds with
\beq \label{C3eq}
C_3 = \half (1+\delta) \|\opAinv\|_{L^2(\Gamma)\to L^2(\Gamma)};
\eeq
see \cite[Lemma 4.1]{GrLoMeSp:15}, \cite[Lemma 3.3]{GaMuSp:16}.

Since $\mathcal{S}^p(\mathcal{T}_h)\subset \mathcal{S}^0(\mathcal{T}_h)$ it is clear that $\|I-P_{hp}\|_{H^1(\Gamma)\to L^2(\Gamma)}\leq \|I-P_{h0}\|_{H^1(\Gamma)\to L^2(\Gamma)}$. The approximation result
\beq\label{eq:approx}
\|I-P_{h0}\|_{H^1(\Gamma)\to L^2(\Gamma)} \leq C h,
\eeq
with $C>0$ dependent only on the constant $c_S$ in \eqref{eq:lambda}, is proved in \cite[Theorem 1.4]{St:08} for the case when $\Gamma$ is piecewise smooth and each element $K\in \mathcal{T}_h$ is flat, and the argument extends to the case when $\Gamma$ is piecewise $C^1$.

Part (a) of Corollary \ref{cor:hversion} follows from combining \eqref{eq:2ndkind}, \eqref{eq:approx}, the bound on $ \|\opAinv\|_{L^2(\Gamma)\to L^2(\Gamma)}$ in \eqref{eq:mild}, and the bound
\beqs
\|D_k^\prime - \ri \eta S_k\|_{L^2(\Gamma)\to H^1(\Gamma)} \lesssim k^{4/3}\log(2+k).
\eeqs
when $|\eta|\sim k$, $\Omega_-$ is $C^{2,\alpha}$ for some $\alpha\in (0,1)$, and $\Gamma$ additionally has strictly positive curvature; this last bound is proved in \cite[Theorem 1.5]{GaSp:18}.

Part (b) of Corollary \ref{cor:hversion} follows from combining \eqref{eq:2ndkind}, \eqref{eq:approx}, the bound on $ \|\opAinv\|_{L^2(\Gamma)\to L^2(\Gamma)}$ in \eqref{eq:Ainv_bound_main}, and the bound
\beqs
\|D_k^\prime - \ri \eta S_k\|_{L^2(\Gamma)\to H^1(\Gamma)} \lesssim k^{3/2}\log(2+k),
\eeqs
when $|\eta|\sim k$, $\Omega_-$ is $C^{2,\alpha}$ for some $\alpha\in (0,1)$, and $\Gamma$ is additionally piecewise smooth; this last bound is also proved in \cite[Theorem 1.5]{GaSp:18}.
\epf

\paragraph{Acknowledgements} The heart of this paper is a novel use of Morawetz's identities, and during the final stages of preparing this paper we learnt of
Cathleen Morawetz's death. We gratefully acknowledge here the sustained and
large influence her work has had on our own research, as well as her
wider contributions to the mathematics community.

VPS thanks Vladimir Kamotski for introducing him to the spatial Fourier-transform arguments that we used to prove Lemma \ref{lem:smysh}, and thanks Ilia Kamotski (University College London) for a suggestion that led to a strengthening of the results in \S\ref{sec:resol}.
EAS thanks Jeff Galkowski (Stanford) and Jared Wunsch (Northwestern) for useful discussions about the semiclassical literature on trapping, and acknowledges support from EPSRC grants EP/1025995/1 and EP/R005591/1. AG acknowledges the support of an EPSRC PhD Studentship held at the University of Reading. We are grateful to the anonymous referee for many helpful comments, including the suggestion to add the final example in \S\ref{sec:geometric}.

\footnotesize{
\bibliographystyle{plain}

}

\end{document}